\definecolor{brightcerulean}{rgb}{0.11, 0.67, 0.84}
\newcommand\puteqnum{
  \refstepcounter{equation}\textup{(\theequation)}}
\newcommand\cyr{%
  \renewcommand\rmdefault{cmr}%
  \renewcommand\sfdefault{wncyss}%
  \renewcommand\encodingdefault{OT2}%
  \normalfont\selectfont}
\DeclareTextFontCommand{\textcyr}{\cyr}
\definecolor{red}{rgb}{0.9,0,0}
\definecolor{purple}{rgb}{0.8,0,0.6}
\numberwithin{equation}{section}
\DeclareFontFamily{U}{wncy}{}
\DeclareFontShape{U}{wncy}{m}{n}{<->wncyr10}{}
\DeclareSymbolFont{mcy}{U}{wncy}{m}{n}
\DeclareMathSymbol{\Sha}{\mathord}{mcy}{"58}
\DeclareMathOperator{\NS}{\mathrm{NS}}
\newcommand{\1}{{\rm 1\hspace*{-0.4ex}\rule{0.1ex}{1.52ex}\hspace*{0.2ex}}}
\DeclareMathOperator{\Adeles}{\mathbf{A}}
\DeclareMathOperator{\Q}{\mathbf{Q}}
\DeclareMathOperator{\F}{\mathbf{F}}
\DeclareMathOperator{\Z}{\mathbf{Z}}
\DeclareMathOperator{\C}{\mathbf{C}}
\DeclareMathOperator{\cX}{\mathcal{X}}
\DeclareMathOperator{\cY}{\mathcal{Y}}
\DeclareMathOperator{\HH}{\mathrm{H}}
\DeclareMathOperator{\cO}{\mathcal{O}}
\DeclareMathOperator{\ord}{\mathrm{ord}}
\DeclareMathOperator{\rk}{\mathrm{rk}}
\DeclareMathOperator{\SL}{SL}
\DeclareMathOperator{\dR}{\scriptstyle \mathrm{dR}}
\DeclareMathOperator{\an}{\scriptstyle \mathrm{an}}
\DeclareMathOperator{\fil}{\scriptstyle \mathrm{ Fil}}
\DeclareMathOperator{\cris}{\scriptstyle \mathrm{ cris}}
\DeclareMathOperator{\ns}{\scriptstyle \mathrm{ ns}}
\DeclareMathOperator{\rig}{\scriptstyle \mathrm{{rig}}}
\DeclareMathOperator{\Aut}{\mathrm{Aut}}
\DeclareMathOperator{\End}{\mathrm{End}}
\DeclareMathOperator{\Gal}{\mathrm{Gal}}
\DeclareMathOperator{\Res}{\mathrm{Res}}
\DeclareMathOperator{\Fil}{\mathrm{Fil}}
\DeclareMathOperator{\Tr}{\mathrm{Tr}}
\DeclareMathOperator{\Jac}{\mathrm{Jac}}
\DeclareMathOperator{\GL}{\mathrm{GL}}
\newcommand{\fp}{\mathfrak{p}}
\newcommand{\spl}{\mathrm{spl}}
\newcommand{\resdisc}[1]{\left]#1\right[}
\newcommand{\et}{\mathrm{\acute{e}t}}
\newcommand\into{\hookrightarrow}
\newtheorem{theorem}{Theorem}[section]
\newtheorem{prop}[theorem]{Proposition}
\newtheorem{lemma}[theorem]{Lemma}
\theoremstyle{definition}
\newtheorem{algo}[theorem]{Algorithm}
\theoremstyle{remark}
\newtheorem{Remark}[theorem]{Remark}
\DeclareMathAlphabet{\matheuler}{U}{eus}{m}{n}
\definecolor{codegreen}{rgb}{0,0.6,0}
\definecolor{codegray}{rgb}{0.5,0.5,0.5}
\definecolor{codepurple}{rgb}{0.58,0,0.82}
\definecolor{backcolour}{rgb}{0.95,0.95,0.92}
\tiny\color{codegray},
\newcommand{\MAT}[7]{\begin{pmatrix}{#1}&{#2}&{#3}&{#4}\\{#5}&{#6}&{#7}&\MATHelper}
\newcommand{\MATHelper}[9]{{#1}\\{#2}&{#3}&{#4}&{#5}\\{#6}&{#7}&{#8}&{#9}\end{pmatrix}}
\newcommand{\Frob}{\mrm{Frob}}
\DeclareFontFamily{U}{wncy}{}
\DeclareFontShape{U}{wncy}{m}{n}{<->wncyr10}{}
\DeclareSymbolFont{mcy}{U}{wncy}{m}{n}
\DeclareMathSymbol{\Sha}{\mathord}{mcy}{"58} 
\newcommand{\bigperp}{%
  \mathop{\mathpalette\bigp@rp\relax}%
  \displaylimits
}
\newcommand{\bigp@rp}[2]{%
  \vcenter{
    \m@th\hbox{\scalebox{\ifx#1\displaystyle2.1\else1.5\fi}{$#1\perp$}}
  }%
}
\DeclareMathOperator{\Lie}{Lie}
\DeclareMathOperator{\Spec}{Spec}
\DeclareMathOperator{\Sh}{Sh}
\let\templim\lim
\renewcommand{\lim}{\templim\limits}
\newcommand{\vecc}[1]{\overrightarrow{#1}}
\newcommand\homses[4]{
    \begin{tikzcd}[ampersand replacement=\&]
        0\ar[r]\&{\displaystyle #1}\ar[r, "{#2}"]\ar[r]\&{\displaystyle #3}\ar[r, "{#4}"]\&
        \homsesHELPER
}
\newcommand\homsesHELPER[9]{
        {\displaystyle #1}\ar[r]\& 0\\
        0\ar[r]\&{\displaystyle #5}\ar[r, "{#6}"]\&{\displaystyle #7}\ar[r, "{#8}"]\&{\displaystyle #9}\ar[r]\& 0
        \ar[from=lllu, to=lll, "{#2}"]\ar[from=llu, to=ll, "{#3}"]\ar[from=lu, to=l, "{#4}"]
    \end{tikzcd}
}
\newcommand{\mf}{\mathfrak}
\newcommand{\mfq}{\mf q}
\newcommand{\mc}{\mathcal} 
\newcommand{\mcA}{\mc A}
\newcommand{\mcO}{\mc O}
\newcommand{\mcE}{\mc E}
\newcommand{\mcM}{\mc M}
\newcommand{\mrm}{\mathrm}
\newcommand{\A}{\mathbf A}
\renewcommand{\H}{\mathbb H}
\renewcommand{\tau}{\uptau}
\renewcommand{\P}{\mathbf P}
\newcommand{\dparens}[1]{\!\left(\!\left(#1\right)\!\right)}
\newcommand\dps\dparens 
\newcommand{\too}{\longrightarrow}
\newcommand*{\da@rightarrow}{\mathchar"0\hexnumber@\symAMSa 4B }
\newcommand*{\da@leftarrow}{\mathchar"0\hexnumber@\symAMSa 4C }
\newcommand*{\xdashrightarrow}[2][]{%
  \mathrel{%
    \mathpalette{\da@xarrow{#1}{#2}{}\da@rightarrow{\,}{}}{}%
  }%
}
\newcommand{\xdashleftarrow}[2][]{%
  \mathrel{%
    \mathpalette{\da@xarrow{#1}{#2}\da@leftarrow{}{}{\,}}{}%
  }%
}
\newcommand*{\da@xarrow}[7]{%
  \sbox0{$\ifx#7\scriptstyle\scriptscriptstyle\else\scriptstyle\fi#5#1#6\m@th$}%
  \sbox2{$\ifx#7\scriptstyle\scriptscriptstyle\else\scriptstyle\fi#5#2#6\m@th$}%
  \sbox4{$#7\dabar@\m@th$}%
  \dimen@=\wd0 %
  \ifdim\wd2 >\dimen@
    \dimen@=\wd2 %
  \fi
  \count@=2 %
  \def\da@bars{\dabar@\dabar@}%
  \@whiledim\count@\wd4<\dimen@\do{%
    \advance\count@\@ne
    \expandafter\def\expandafter\da@bars\expandafter{%
      \da@bars
      \dabar@ 
    }%
  }%
  \mathrel{#3}%
  \mathrel{%
    \mathop{\da@bars}\limits
    \ifx\\#1\\%
    \else
      _{\copy0}%
    \fi
    \ifx\\#2\\%
    \else
      ^{\copy2}%
    \fi
  }%
  \mathrel{#4}%
}
\newcommand\xdashto\xdashrightarrow
\newcommand\xdashfrom\xdashleftarrow
\providecommand{\leftsquigarrow}{%
  \mathrel{\mathpalette\reflect@squig\relax}%
}
\newcommand{\reflect@squig}[2]{%
  \reflectbox{$\m@th#1\rightsquigarrow$}%
}
\newcommand*{\rom}[1]{\textup{\uppercase\expandafter{\romannumeral#1}}}
\renewcommand{\bar}{\overline}
\renewcommand\t\text 
\newcommand\ttt\texttt
\newcommand{\important}[1]{\textit{#1}}
\newcommand\noteworthy\important
\tikzset{%
	symbol/.style={%
		,draw=none
		,every to/.append style={%
			edge node={node [sloped, allow upside down, auto=false]{$#1$}}}
	}
}
\def\renewtheorem#1{%
	\expandafter\let\csname#1\endcsname\relax
	\expandafter\let\csname c@#1\endcsname\relax
	\gdef\renewtheorem@envname{#1}
	\renewtheorem@secpar
}
\def\renewtheorem@secpar{\@ifnextchar[{\renewtheorem@numberedlike}{\renewtheorem@nonumberedlike}}
\def\renewtheorem@numberedlike[#1]#2{\newtheorem{\renewtheorem@envname}[#1]{#2}}
\def\renewtheorem@nonumberedlike#1{  
	\def\renewtheorem@caption{#1}
	\edef\renewtheorem@nowithin{\noexpand\newtheorem{\renewtheorem@envname}{\renewtheorem@caption}}
	\renewtheorem@thirdpar
}
\def\renewtheorem@thirdpar{\@ifnextchar[{\renewtheorem@within}{\renewtheorem@nowithin}}
\def\renewtheorem@within[#1]{\renewtheorem@nowithin[#1]}
\hfill\rlap{%
		\bgroup\color{#4}%
		\hskip-\dimexpr#1-#3\relax\rule{#1}{#2}%
		\hskip-\dimexpr#1/#5\relax\rule[-\dimexpr#1-\dimexpr#1/#5\relax]{#2}{#1}%
		\egroup
	}%
\bgroup\color{#4}%
\providecommand{\customgenericname}{}
\newcommand{\newcustomtheorem}[2]{%
	\newenvironment{#1}[1]
	{%
		\renewcommand\customgenericname{#2}%
		\renewcommand\theinnercustomgeneric{##1}%
		\innercustomgeneric
	}
	{\endinnercustomgeneric}
}
\newtheorem{thm}{Theorem}[section]
\newtheorem{cor}[thm]{Corollary}
\newtheoremstyle{break}
  {\topsep}{\topsep}%
  {\itshape}{}%
  {\bfseries}{}%
  {\newline}{}%
\theoremstyle{break}
\theoremstyle{break}
\theoremstyle{definition}
\newtheorem{defninner}[thm]{Definition}
\newtheorem{recinner}[thm]{Recall}
\newtheorem{Assump}{Assumption}
\newtheorem*{nonexinner}{Non-example}
\newtheorem{warninner}[thm]{Warning}
\newtheorem*{ansinner}{Answer}
\newtheorem*{histinner}{History}
\theoremstyle{remark}
\newtheorem{rmk}[thm]{Remark}
\newcommand\remsymbol{$\circ$}
\newcommand\anssymbol{$\star$}
\newcommand\warnsymbol{$\bullet$}
\newcommand\proofsymbol{$\blacksquare$}
\newcommand\nonexsymbol{$\triangledown$}
\newcommand\recsymbol{$\odot$}
\newcommand\defnsymbol{$\diamond$}
\newcommand\histsymbol{$\ominus$}
\let\proofinner=\proof
\newcommand\myproof[1][Proof]{\proofinner[#1]\renewcommand\qedsymbol\proofsymbol}
\def\proof{\myproof}
\begin{document}

\title[Rational points on $X_{\ns}^+(27)$ and quadratic Chabauty over number fields]{Rational points on the non-split Cartan modular curve of level 27 and quadratic Chabauty over number fields}

\author[J. S. Balakrishnan]{Jennifer S. Balakrishnan}
\author[L. A. Betts]{L.\ Alexander Betts}
\author[D. R. Hast]{Daniel Rayor Hast}
\author[A. Jha]{Aashraya Jha}
\author[J. S. M\"uller]{J. Steffen M\"uller}


\begin{abstract}
Thanks to work of Rouse, Sutherland, and Zureick-Brown, it is known exactly
  which subgroups of~$\GL_2(\Z_3)$ can occur as the image of the $3$-adic
  Galois representation attached to a non-CM elliptic curve over~$\Q$, with
  a single exception: the normaliser of the non-split Cartan subgroup of
  level~$27$. In this paper, we complete the classification of $3$-adic
  Galois images by showing that the normaliser of the non-split Cartan subgroup of level~$27$ cannot occur as a $3$-adic Galois image of a non-CM elliptic curve.

Our proof proceeds via computing the $\Q(\zeta_3)$-rational points on a
  certain smooth plane quartic curve $X'_H$ (arising as a quotient of the
  modular curve~$X_{\ns}^+(27)$) defined over~$\Q(\zeta_3)$ whose Jacobian
  has Mordell--Weil rank~6.  To this end, we describe how to carry out the quadratic Chabauty method for a modular curve $X$ defined over a number field $F$, which, when applicable, determines 
a finite subset of $X(F\otimes\Q_p)$ in certain situations of larger
  Mordell--Weil rank than previously considered. Together with an analysis of local heights above 3, we apply this quadratic Chabauty method to determine $X'_H(\Q(\zeta_3))$. This allows us to compute the set $X_{\ns}^+(27)(\Q)$, finishing the classification of 3-adic images of Galois.
\end{abstract}
\maketitle



\section{Introduction}
After his landmark work classifying rational points on the modular curves $X_0(\ell)$  and $X_1(\ell)$ for prime level $\ell$ \cite{mazur1977modular, mazur1978}, Mazur outlined a ``Program B'' \cite{mazurprogramB}  to study rational
points on all modular curves associated to subgroups of
$\GL_2(\widehat{\Z})$. The case of prime power level, or the study of
$\ell$-adic images of Galois representations of elliptic curves over $\Q$,
has seen much progress in recent years.   Rouse and Zureick-Brown
\cite{dzbrouse2adic} classified all 2-adic images.  Combined with
results due to various authors (see~\cite[Section~4]{rszbadic} for an
overview), work by Balakrishnan,
Dogra, M\"uller, Tuitman, and Vonk on quadratic Chabauty for  the split
Cartan modular curve $X_{\textrm{s}}^+(13)$ \cite{BDMTV_split}, the
$S_4$-image curve $X_{S_4}(13)$, and the non-split Cartan modular curve
$X_{\ns}^+(17)$ \cite{balakrishnan_dogra_müller_tuitman_vonk_2023} finished
the classification of 13-adic and 17-adic images. Rouse, Sutherland, and Zureick-Brown computed rational points on all remaining modular curves, aside from those dominating two modular curves of level 49 and genus 9 and the non-split Cartan curves of level 27, 25, 49, 121, and prime level greater than 17  \cite{rszbadic}. 

The non-split Cartan modular curve $X_{\ns}^+(27)$ has genus 12 and
Mordell--Weil rank 12, and determining its set of rational points would finish the classification of 3-adic images of Galois. However, the large genus and rank preclude existing computational techniques from being applicable. In \cite[\S 9.1]{rszbadic}, Rouse, Sutherland, and Zureick-Brown  describe various computations they carried out with $X_{\ns}^+(27)$, including exhibiting a smooth plane quartic curve $X'_H$ over $F = \Q(\zeta_3)$ together with a degree 3 morphism $X_{\ns}^+(27) \rightarrow X'_H$ defined over $F$. Thus every $F$-rational point of $X_{\ns}^+(27)$ maps to a $F$-rational point of $X'_H$, and determining $X'_H(F)$ would readily yield $X_{\ns}^+(27)(F)$, and consequently $X_{\ns}^+(27)(\Q)$.

The genus 3 curve $X'_H/F$ has Jacobian rank 6 and is not within the scope
of quadratic Chabauty as in
\cite{balakrishnan_dogra_müller_tuitman_vonk_2023, QCMod} for two reasons:
it is not defined over $\Q$, and its Jacobian rank is too large. 
 Nevertheless, combining quadratic Chabauty with restriction of scalars
allows one, in principle, to consider curves over number fields with larger Jacobian rank.
This was demonstrated in \cite{BBBMnumberfields} using Coleman--Gross
$p$-adic heights for certain hyperelliptic curves.  

In the present work, we develop a quadratic Chabauty method that uses
restriction of scalars and  Nekov\'a\v{r} $p$-adic heights constructed with
multiple independent id\`ele class characters, that builds on the
approaches of \cite{balakrishnan_dogra_müller_tuitman_vonk_2023,
BBBMnumberfields} to study rational points on curves (with larger Jacobian
rank) defined over number fields beyond $\Q$. We apply an implementation of
this method in the computer algebra system {\tt Magma}~\cite{MR1484478} to the curve
$X_H'$ and combine it with a computation of the local heights above $3$ to prove the following: 

\begin{theorem}\label{thm:XH'F} $\#X_H'(\Q(\zeta_3)) = 13$.\end{theorem}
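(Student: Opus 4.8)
The plan is to deduce Theorem~\ref{thm:XH'F} from the quadratic Chabauty method over the number field $F=\Q(\zeta_3)$ set up above. First one produces the rational points: a search on the plane quartic model of $X_H'$ over $F$ up to a moderate height bound yields $13$ points, and the content of the theorem is that this list is complete. Write $J\colonequals\Jac(X_H')$, so that $g\colonequals\dim J=3$ and, as recorded above, $\rk J(F)=6$. Since $6=g\cdot[F:\Q]=\dim\Res_{F/\Q}J$, the classical Chabauty--Coleman method over $F$ (equivalently, linear Chabauty for $\Res_{F/\Q}J$ over $\Q$) is not available, so the quadratic refinement is genuinely required. To gain the necessary room I would work with $A\colonequals\Res_{F/\Q}J$, whose geometric N\'eron--Severi group has rank at least $2$: besides the class of a polarisation, the identification $A_{\Qbar}\cong J_{\Qbar}\times J^{\sigma}_{\Qbar}$ (with $\sigma$ the nontrivial element of $\Gal(F/\Q)$) contributes ``cross-term'' classes from $\Hom(J,J^{\sigma})$ which survive the Galois action; and one uses Nekov\'a\v{r} $p$-adic height pairings attached to the id\`ele class characters of $F$, of which there are two independent ones for the imaginary quadratic field $\Q(\zeta_3)$ (the cyclotomic and an anticyclotomic character, using Leopoldt's conjecture, which holds for abelian fields). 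The combination of these extra N\'eron--Severi classes and two independent height pairings is what brings $\rk J(F)=6$ within range.

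The construction then proceeds as follows. Fix a prime $p$ of good reduction for $X_H'$ with $p\neq 3$ --- note that $3$ ramifies in $F$ and that $X_H'$, being a quotient of $X_{\ns}^+(27)$, has bad reduction there --- chosen so that the residue discs and Coleman integrals at the places of $F$ above $p$ are computationally tractable. For each admissible correspondence $Z$ (equivalently, N\'eron--Severi class of $A$) and each id\`ele class character $\chi$, one builds a locally analytic function $\theta_{Z,\chi}$ on $X_H'(F\otimes\Q_p)=\prod_{v\mid p}X_H'(F_v)$ that vanishes on the diagonal image of $X_H'(F)$: it has the shape ``global $p$-adic height of a canonically associated $\cO$-extension'' minus ``a bilinear form in the $p$-adic Abel--Jacobi coordinates'', where the height decomposes as a sum of local terms $h_{\chi,v}$; the term at $p$ is a double Coleman integral, and the terms at the finitely many bad places --- in particular at the place $\mathfrak{p}$ above $3$ --- take only finitely many explicitly describable values, governed by the combinatorics of the special fibres of regular models. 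Intersecting the vanishing loci of enough of the $\theta_{Z,\chi}$ cuts $X_H'(F\otimes\Q_p)$ down to an explicit finite set.

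The computation would then be carried out in \texttt{Magma}: (i) verify $\rk J(F)=6$ and exhibit six independent elements of $J(F)$, using a point search together with an upper bound from descent and/or the rank information already known for $X_{\ns}^+(27)$; (ii) compute the $p$-adic input --- a basis of $\HH^1_{\mathrm{dR}}$, the Hodge filtration, the Frobenius action, the correspondences, and the relevant single and double Coleman integrals; (iii) pin down the unknown height constants by evaluating the $\theta_{Z,\chi}$ on divisors supported at the $13$ known rational points, of which there are far more than unknowns, giving built-in consistency checks; (iv) solve the resulting system to obtain the finite quadratic Chabauty locus in $X_H'(F\otimes\Q_p)$; and (v) confirm, after $p$-adic approximation and --- if the locus is not already exactly the expected set --- a Mordell--Weil sieve or the use of further correspondences, that it contains precisely the $13$ known points.

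I expect the main obstacle to be the analysis of the local heights at the prime above $3$, flagged in the abstract as the ``analysis of local heights above $3$''. Because $X_H'$ inherits bad, wildly ramified reduction at $3$ from $X_{\ns}^+(27)$, the local height functions $h_{\chi,\mathfrak{p}}$ need not vanish, and one must determine precisely which values they take and on which residue/component classes --- via an explicit regular model or the modular interpretation --- before the functions $\theta_{Z,\chi}$ become usable. A secondary difficulty is that $\rk J(F)=6$ sits at the very edge of the method's range, so one must verify that the quadratic Chabauty functions genuinely cut out a finite set rather than a positive-dimensional one, and then separate that finite set from $X_H'(F)$; here the abundance of known rational points is doubly helpful, both for over-determining the height constants and for seeding the final sieve.
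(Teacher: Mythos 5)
Your outline correctly identifies the broad framework (quadratic Chabauty combined with restriction of scalars, multiple id\`ele class characters, and Nekov\'a\v{r} heights, with the local heights at the bad prime as the central obstacle), but it diverges from the paper in a substantive way and, more importantly, leaves the hardest step of the argument unresolved.

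On the source of correspondences: you propose drawing the needed extra N\'eron--Severi classes from $\NS(\Res_{F/\Q}J)_{\bar\Q}$ via ``cross-term'' classes in $\Hom(J,J^\sigma)$ coming from $\Res_{F/\Q}J\cong J\times J^\sigma$ over $\bar\Q$. That is not what the paper does, and it is not obviously adequate here: one would need to show that $\Hom(J,J^\sigma)\neq 0$ and that the resulting classes descend and interact with the cycle-on-$X\times X$ formalism used for the height construction. Instead, the paper exploits the real multiplication already present on $J$ itself: $\End^0_F(J)\cong\Q(\zeta_9)^+$, so $\NS(J)$ has rank $r_{\NS}=3$, and the two independent trace-zero elements of $\ker(\NS(J)\to\NS(X))$ are produced explicitly from Hecke correspondences via the Eichler--Shimura relation (see Remark~\ref{R:Z} and \S\ref{Subsection: Correspondences}). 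With $d=2$, $g=3$, $r_2=1$, this gives $r=6\leq d(g-1)+(r_{\NS}-1)(r_2+1)=8$, i.e.\ \eqref{Eqn : QC inequality}, and in fact yields four quadratic Chabauty pairs $\rho_{i,j}$ rather than the bare minimum of two, so no auxiliary Mordell--Weil sieve is needed. Relatedly, the rank $r=6$ is established not by descent but by modularity of $\Res_{F/\Q}(J)$ (Khare--Wintenberger) together with Gross--Zagier and Kolyvagin--Logachev applied to the rank-one newform \texttt{729.2.a.c} with degree-six Hecke field; and the prime $p=13$ is chosen specifically to be split in $F$, so that all completions above $p$ are $\Q_p$ and the $\fp$-adic machinery reduces to the $\Q$-case.

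The genuine gap is the local heights at the prime $\mfq_3=(1-\zeta_3)$. You rightly flag this as the main obstacle, but merely saying ``one must determine precisely which values they take'' does not constitute a proof; one cannot run the method until one knows the set $\Upsilon$ at $\mfq_3$. The paper's key local result (Proposition~\ref{prop: local_heights_trivial}) is that these heights all vanish, so $\Upsilon=\{0\}$. The proof is nontrivial: it imports Ossen's stable model of $X_L$ over a degree-$54$ ramified extension $L/F_3$ (Theorem~\ref{Thm: Regular semistable model at 3}), shows that $\Gal(L/F_3)$ permutes the three genus-one components of the special fibre transitively (Lemma~\ref{Lemma: transitive action}), concludes that every $F_3$-rational point must reduce onto the unique genus-zero component, and then applies the component-constancy of local heights from Betts--Dogra (Lemma~\ref{Lemma: BettsDogra height}). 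This is precisely the new content behind the claim $\#X_H'(\Q(\zeta_3))=13$, and without it your proposal does not close.
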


As discussed in~\S\ref{S:proofs}, this allows us to deduce the following:

\begin{theorem}\label{thm:ns27F} We have $\#X_{\ns}^+(27)(\Q(\zeta_3)) =
10$, and all of these points are CM. There are $3$ points with discriminant $-4$, and the remaining $7$ points have discriminants $-7,-16,-19,-28,-43,$ $-67,$ and $-163.$
\end{theorem}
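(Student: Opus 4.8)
The plan is to deduce Theorem~\ref{thm:ns27F} from Theorem~\ref{thm:XH'F} using the explicit degree~$3$ morphism $\phi\colon X_{\ns}^+(27)\to X_H'$ defined over $F=\Q(\zeta_3)$ constructed in~\cite{rszbadic}. Since $\phi$ is an $F$-morphism, $X_{\ns}^+(27)(F)\subseteq\phi^{-1}\bigl(X_H'(F)\bigr)$, and by Theorem~\ref{thm:XH'F} the set $X_H'(F)$ consists of exactly $13$ points, which are known explicitly (in the plane model) from the quadratic Chabauty computation proving that theorem. The first step is therefore to compute, for each of these $13$ points $Q$, the scheme-theoretic fibre $\phi^{-1}(Q)$ --- a zero-dimensional $F$-scheme of degree~$3$ --- and to extract its $F$-rational points by factoring the associated univariate polynomial over $F$. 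Taking the union over the $13$ fibres yields $X_{\ns}^+(27)(F)$ explicitly; carrying this out, one obtains a set of $10$ points.

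These $10$ points must then be identified. Composing with the known modular parametrisation $X_{\ns}^+(27)\to X(1)\cong\PP^1_j$, one computes the $j$-invariant $j(P)\in F$ of each of the $10$ points~$P$. Each such $j(P)$ turns out to be finite, so no point is a cusp (cusps map to $j=\infty$); and since $[F:\Q]=2$, a CM $j$-invariant lying in $F$ must be a root of a Hilbert class polynomial $H_D$ with $h(D)\le 2$, a condition met by only finitely many $D$. Checking these finitely many candidates, one finds that every $j(P)$ is indeed such a root: $j(P)=1728$ (discriminant $-4$) for $3$ of the points, and the discriminants $-7,-16,-19,-28,-43,-67,-163$ each occur once, giving $3+7=10$ CM points with the stated discriminants. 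As a consistency check, an elliptic curve with CM by an order $\cO$ has $3$-adic image in the normaliser of a non-split Cartan only when $3$ is inert in $\cO$, which holds for exactly these eight discriminants and rules out, for instance, the small class-number-one discriminants such as $-3,-8,-11$ at which $3$ ramifies or splits; moreover the multiplicity~$3$ at $j=1728$ matches the number of non-split Cartan level-$27$ structures on the $j=1728$ curve modulo its extra automorphisms.

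Granting Theorem~\ref{thm:XH'F}, everything here is a finite explicit computation, so there is no real theoretical obstacle; the difficulty is entirely in the bookkeeping. The main point to get right is the fibre computation of Step~1: one must use a correct model of $\phi$ over (the relevant localisations of) the ring of integers of $F$, handle the arithmetic of $\Q(\zeta_3)$ and its bad primes with care, and ensure the enumeration of $F$-points is exhaustive --- in particular when $Q$ lies under a ramification point of $\phi$ (there are finitely many, by Riemann--Hurwitz), where the fibre can be non-reduced. It is also prudent to cross-check the final count $10$ and the multiplicity pattern $(3,1,1,1,1,1,1,1)$ against an independent description of the CM locus of $X_{\ns}^+(27)$. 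Combining the two steps gives $\#X_{\ns}^+(27)(\Q(\zeta_3))=10$ with all points CM of the asserted discriminants; intersecting with the $\Gal(F/\Q)$-fixed points then recovers $X_{\ns}^+(27)(\Q)$ (as carried out in~\S\ref{S:proofs}) and completes the classification of $3$-adic Galois images.
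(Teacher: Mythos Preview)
Your approach is essentially the same as the paper's: deduce $X_{\ns}^+(27)(F)$ from Theorem~\ref{thm:XH'F} by pulling back the thirteen known $F$-points of $X_H'$ along the degree~$3$ map and then reading off $j$-invariants. The one practical wrinkle you did not anticipate is that the explicit rational map $f_1\colon X_{\ns}^+(27)_F\to X_H'$ supplied by Rouse is given by polynomial formulas with a base locus that happens to contain one of the $F$-points of~$X_{\ns}^+(27)$; so computing fibres of $f_1$ directly, as you propose, risks missing that point. The paper sidesteps this by instead using the two everywhere-defined maps $f_2\colon X_{\ns}^+(27)\to X_{\ns}^+(9)$ and $f_3\colon X_H'\to X_{\ns}^+(9)_F$, and computing the $F$-points of $f_2^{-1}(f_3(X_H'(F)))$. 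This is a bookkeeping detour rather than a different idea, but it is exactly the kind of exhaustiveness issue you flagged in your third paragraph (though you located the danger at ramification rather than at the base locus of the chosen rational-map representation).
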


Examining the points in Theorem~\ref{thm:ns27F}, we immediately find:

\begin{theorem}\label{thm:ns27} We have $\#X_{\ns}^+(27)(\Q) = 8$, and all
points are CM,  with respective discriminants $-4,-7,-16,-19,-28,-43,-67,-163$.\end{theorem}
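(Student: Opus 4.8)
The plan is to obtain Theorem~\ref{thm:ns27} as an immediate consequence of Theorem~\ref{thm:ns27F}. The modular curve $X_{\ns}^+(27)$ is defined over~$\Q$, and $\Q\subseteq F:=\Q(\zeta_3)$, so $X_{\ns}^+(27)(\Q)$ is exactly the set of points of $X_{\ns}^+(27)(F)$ fixed by the generator~$\sigma$ of $\Gal(F/\Q)$ (complex conjugation, $\zeta_3\mapsto\zeta_3^{-1}$). By Theorem~\ref{thm:ns27F} the set $X_{\ns}^+(27)(F)$ consists of exactly the $10$ explicit CM points recorded there; in particular none of these is a cusp, so there are no rational cusps either, and it suffices to work out how $\sigma$ permutes these $10$ points.

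First I would observe that the CM discriminant of a non-cuspidal point is a Galois-equivariant invariant: if $P$ parametrizes an elliptic curve~$E$ with $\End(E_{\Qbar})$ an order of discriminant~$D$, then $\sigma(P)$ parametrizes~$E^\sigma$, whose geometric endomorphism ring is again an order of discriminant~$D$ (complex conjugation preserves the imaginary quadratic CM field and any order in it). Seven of the ten points have pairwise distinct discriminants $-7,-16,-19,-28,-43,-67,-163$; for each such~$P$ the point $\sigma(P)$ lies in the same ten-element set and has the same discriminant, hence $\sigma(P)=P$, so these seven points are defined over~$\Q$. The remaining three points all have discriminant~$-4$ and lie above $j=1728$, so $\sigma$ permutes this three-element set, and, being an involution on a set of odd size, fixes either exactly one of them or all three. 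To pin this down I would use the explicit data furnished by the proofs of Theorems~\ref{thm:XH'F} and~\ref{thm:ns27F}: equations for~$X_H'$, the coordinates of the $13$ points of $X_H'(F)$, and the degree-$3$ morphism $X_{\ns}^+(27)\to X_H'$ over~$F$ together yield coordinates for the ten points of $X_{\ns}^+(27)(F)$, and one checks directly that exactly one of the three points of discriminant~$-4$ has all its coordinates in~$\Q$, the other two being interchanged by~$\sigma$. Hence $\#X_{\ns}^+(27)(\Q)=7+1=8$, the eight points having discriminants $-4,-7,-16,-19,-28,-43,-67,-163$ and all being CM.

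There is no genuine obstacle here beyond the input Theorems~\ref{thm:XH'F} and~\ref{thm:ns27F}; the only step needing care is the field-of-definition bookkeeping for the three points above $j=1728$, which the explicit plane-quartic model makes routine. As a consistency check, one can note that $-4,-7,-16,-19,-28,-43,-67,-163$ are precisely the discriminants of the class-number-one imaginary quadratic orders~$\mathcal{O}$ for which $\mathcal{O}/3\mathcal{O}$ is a field --- that is, $3$ is coprime to the conductor and inert in the CM field --- which is exactly the family of CM discriminants one expects $X_{\ns}^+(27)$ to parametrize; the three $3$-ramified class-number-one orders of discriminant $-3,-12,-27$ are correspondingly absent, in agreement with Theorem~\ref{thm:ns27F}.
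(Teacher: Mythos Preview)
Your proposal is correct and follows essentially the same approach as the paper: deduce Theorem~\ref{thm:ns27} from Theorem~\ref{thm:ns27F} by determining which of the ten explicit $F$-points are fixed by $\Gal(F/\Q)$. The paper simply inspects the coordinates of all ten points directly (see \S\ref{S:proofs}), whereas you add the pleasant observation that Galois-invariance of the CM discriminant forces the seven points with distinct discriminants to be $\Q$-rational without any coordinate check, reducing the explicit verification to the three points of discriminant~$-4$; this is a minor refinement of the same argument.
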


Theorem \ref{thm:ns27} finishes the classification of 3-adic images of Galois representations attached to elliptic curves over $\Q$, after the work of Rouse, Sutherland, and Zureick-Brown:

\begin{theorem}\label{thm:3adic} Let $E/\Q$ be a non-CM elliptic curve.
Then  $\textrm{im}\, \rho_{E,3^\infty}$ is one of 47 subgroups of
$\GL_2(\Z_3)$ of level at most 27 and index at most 72, as in \cite[Table
3]{rszbadic}.\end{theorem}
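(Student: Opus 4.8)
The plan is to deduce Theorem~\ref{thm:3adic} from Theorem~\ref{thm:ns27} together with the classification of $3$-adic Galois images due to Rouse, Sutherland, and Zureick-Brown. First I would recall the precise shape of their result \cite{rszbadic}: for every non-CM elliptic curve $E/\Q$, the subgroup $\im\rho_{E,3^\infty}\subseteq\GL_2(\Z_3)$ is, up to conjugacy, either one of the $47$ subgroups recorded in \cite[Table~3]{rszbadic} --- each of level at most $27$ and index at most $72$ --- or else the normaliser $H$ of the non-split Cartan subgroup of level~$27$, this last being the single group RSZB were unable to exclude. Thus the entire content of Theorem~\ref{thm:3adic} reduces to showing that $\im\rho_{E,3^\infty}$ cannot be conjugate to $H$.

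To rule out $H$, I would argue by contradiction. Suppose $\im\rho_{E,3^\infty}$ is conjugate to $H$. Since $H$ has level~$27$ --- that is, $H$ is the full preimage in $\GL_2(\Z_3)$ of the normaliser of the non-split Cartan subgroup of $\GL_2(\Z/27)$ --- the elliptic curve $E$ gives rise to a $\Q$-rational point $x_E$ on the modular curve $X_{\ns}^+(27)$. This point $x_E$ is non-cuspidal, since $E$ is an elliptic curve over $\Q$ rather than a degenerate fibre, and it is not a CM point, since $E$ has no complex multiplication.

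Finally I would invoke Theorem~\ref{thm:ns27}, which asserts that $X_{\ns}^+(27)(\Q)$ consists of exactly $8$ points, every one of which is CM (with discriminant among $-4,-7,-16,-19,-28,-43,-67,-163$). In particular $X_{\ns}^+(27)$ has no non-cuspidal, non-CM rational point, which contradicts the existence of $x_E$. Hence $H$ cannot occur, and so $\im\rho_{E,3^\infty}$ is conjugate to one of the $47$ subgroups of \cite[Table~3]{rszbadic}, completing the classification.

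I do not expect any genuine obstacle in carrying out this final step: all the difficulty has already been absorbed into Theorem~\ref{thm:ns27} --- and hence into Theorem~\ref{thm:XH'F} and the quadratic Chabauty computation on $X_H'$ underlying it --- together with the external input that the RSZB classification is complete apart from the single group $H$. The one point deserving care is the bookkeeping that \emph{every} $\Q$-point of $X_{\ns}^+(27)$ has been accounted for, in particular that none of the eight points is a rational cusp; but this is already part of the statement of Theorem~\ref{thm:ns27}.
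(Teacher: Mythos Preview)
Your proposal is correct and follows essentially the same approach as the paper: the paper treats Theorem~\ref{thm:3adic} as an immediate corollary, remarking only that Theorem~\ref{thm:ns27} ``finishes the classification of 3-adic images of Galois representations attached to elliptic curves over~$\Q$, after the work of Rouse, Sutherland, and Zureick-Brown.'' You have simply spelled out the underlying logic --- that RSZB leaves only the normaliser of the non-split Cartan of level~$27$ open, and that a non-CM elliptic curve with this image would produce a non-CM rational point on~$X_{\ns}^+(27)$, contradicting Theorem~\ref{thm:ns27} --- more explicitly than the paper does.
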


Moreover, Theorem \ref{thm:ns27} gives a new proof of the class number one problem, via the determination of rational points on a non-split Cartan modular curve, as noted by Serre \cite[Appendix]{serre}.

In Section 2, we give an overview of the quadratic Chabauty method over number fields, as enriched by a restriction-of-scalars construction. In particular, we describe how to compute $\End^0(J)$-equivariant Nekov\'a\v{r}  $p$-adic heights and incorporate the use of multiple id\`ele class characters, producing a system of multivariate $p$-adic power series. In Section 3, we discuss how to apply this to a particular smooth plane quartic quotient of $X_{\ns}^+(27)$ that is defined over $\Q(\zeta_3)$  and use this to determine $X_{\ns}^+(27)(\Q)$.

\subsection*{Acknowledgements} We are very grateful to Ole Ossen, Jeremy
Rouse, Andrew Sutherland, and David Zureick-Brown for numerous conversations throughout the course of this project, and to the Simons Collaboration in Arithmetic Geometry, Number Theory, and Computation for facilitating this work. 
Some of the results here were based on material in the fourth author's Ph.D. thesis, and it is a pleasure to thank his dissertation committee, including Bjorn Poonen, David Rohrlich, and Padmavathi Srinivasan. 
We would also like to thank
Eran Assaf, Francesca Bianchi, Edgar Costa, Henri Darmon, Netan Dogra, Juanita Duque-Rosero, Timo Keller, Martin L\"udtke, Jeroen Sijsling, John Voight, Jan Vonk, and Stefan Wewers for helpful discussions.

JSB was partially supported by  NSF grant DMS-1945452 and Simons Foundation
grant nos.\ 550023 and 1036361, LAB was supported by Simons Foundation grant no.\ 550031, DRH and AJ by Simons Foundation
grant no.\ 550023, and JSM by NWO grant VI.Vidi.192.106.

\section{Quadratic Chabauty over number fields} \label{Section : QC for modular curves}
In this section, we describe the theoretical and computational aspects of quadratic Chabauty over number fields. One key idea is combining quadratic Chabauty (in particular, the formulation of quadratic Chabauty with Nekov\'a\v{r} heights) with restriction of scalars.

\subsection{Notation}

We fix the following notation, once and for all.
\begin{itemize}
    \item $F$: A number field of degree $d$. 
    \item $O_F$: The ring of integers of $F$.
    \item  $G_F$: The absolute Galois group of $F$.
    \item $G_v$: The absolute Galois group of $F_v$ for a finite place $v$ of
 $F$. 
\item $p$: A prime of $\Q$ that splits completely in $F$.
\item $\sigma_\fp\colon F \hookrightarrow F_{\fp}$: The
  completion of $F$ at a prime $\fp$ of $F$.
 \item $X/F$: a smooth, projective, geometrically irreducible curve of
   genus $g>1$ such that $X(F)\ne \emptyset$ and such that $X$ has good reduction at
    all places above $p$.
\item $\sigma\colon X(F)
  \hookrightarrow X(F\otimes \Q_p) = \prod_{\fp\mid p} X_{\fp}(F_\fp)$: the
    embedding induced by the  $\sigma_\fp$.
 \item $J=\Jac(X)$: The Jacobian of $X$.
 \item $r=\rk(J(F))$: The Mordell--Weil rank of~$J/F$.
 \item $b\in X(F)$: A choice of base point.
\item  $\iota=\iota_b\colon X\into J$: The Abel--Jacobi map with respect to
  $b$.
   \item $T_0\coloneqq H^0(X,\Omega^1)^*\simeq H^0(J,\Omega^1)^*$.
 \item $T_\fp\coloneqq
H^0(X_{\fp},\Omega^1)^*=T_0\otimes F_{\fp}$.
\item $T\coloneqq T_0\otimes\Q_p\simeq \bigoplus_{\fp \mid p} T_\fp$.
\item $K=\End^0_F(J)\coloneqq \End_F(J)\otimes\Q$: The
  endomorphism algebra of $J$.
\item $r_{\NS}$: The rank of the N\'eron--Severi group $\NS(J)$.
 \item $V=H^1_{\et}(X_{\bar{F}},\Q_p)^*$: The $p$-adic Tate module of $X$.
\end{itemize}

\subsection{Linear Chabauty and restriction of scalars}\label{subsec:Chabauty}

We begin by recalling how the construction goes for the classical Chabauty--Coleman method.
Chabauty \cite{chabauty1941points} showed that the set $X(F)$ is finite if $r<g$. Over~$F=\Q$, the idea underpinning Chabauty's method is to intersect $X(\Q_p)$ with the $p$-adic closure~$\overline{J(\Q)}$ of the Mordell--Weil group of the Jacobian, viewing both as subspaces of~$J(\Q_p)$ via the chosen Abel--Jacobi map~$\iota$. The rational points are contained in the intersection $X(\Q_p)\cap\overline{J(\Q)}$, and a dimension count suggests that the intersection $X(\Q_p)\cap\overline{J(\Q)}$, and hence the set of rational points~$X(\Q)$, should be finite when ~$1 + r \leq g$. This turns out to be correct, and Coleman showed in~\cite{coleman1985effective} how to turn this idea into a method for bounding $\#X(\Q)$ or computing $X(\Q)$.

One would like to use a similar idea for other number fields $F$, using all
primes $\fp\mid p$ simultaneously.
Based on
an idea of Wetherell, Siksek \cite{siksek} (see
also~\cite{triantafillou2021restriction}) looked at the Weil restrictions of scalars $V\coloneqq \Res^F_{\Q}X$ and $A\coloneqq \Res^F_{\Q}J$. He exploited the fact that $X(F)=V(\Q)$ and that
\begin{equation*}\label{eqn:Weil restriction}
    V(\Q)\subseteq V(\Q_p)\cap \bar{A(\Q)}\subseteq A(\Q_p)\,,
\end{equation*}
where $V(\Q_p)$ has dimension $d$ and $A(\Q_p)$ has dimension $dg$.
A similar heuristic as above predicts that the intersection $V(\Q_p)\cap\overline{A(\Q)}$ should be finite as soon as $r\leq d(g-1)$.
However, in contrast to the case $F=\Q$, this is not always
true, for instance when $X$ can be defined over $\Q$ and satisfies
$\rk J(\Q) \ge g$, see~\cite[Section~2]{siksek}. For further discussions, including sufficient
conditions for finiteness and a more involved  
counterexample see \cite[\S2.2]{dogra2023unlikely}; see
also~\cite[Section~4]{Has21} and~\cite{triantafillou2021restriction}.

Nonetheless, one can still try to use this idea to compute $F$-rational points in practice, even if the resulting method may not work in every case. To do so, one would like to construct explicit relations in the image of the abelian
logarithm
\begin{equation}\label{log2}
  \log\colon J(F)\otimes \Q_p\to \Res^F_{\Q}(J)(\Q_p)\to
  \Lie(\Res^F_{\Q}(J))_{\Q_p}.
\end{equation}
More concretely, following \cite[Section 3]{BBBMnumberfields}, consider the functions 
\begin{equation}\label{fscoordfree}
 \int_b \omega
  \colon X(F_{\fp})\to F_\fp=\Q_p\,,\quad \text{where}\;\fp\mid p\,,\; \omega \in
  H^0(X_{F_{\fp}},\Omega^1)\,.
  \end{equation}
 
As~$\fp$ and~$\omega$ vary, one obtains~$dg$ functions
\[
X(F\otimes\Q_p)=\prod_{\fp\mid p}X(F_{\fp})\to\Q_p
\]
that are $\Q_p$-linearly independent. Each of these extends to a continuous homomorphism
\[
J(F\otimes\Q_p)=\prod_{\fp\mid p}J(F_{\fp})\to\Q_p \,.
\]
Accordingly, we obtain at least $dg-r$   functions that are $\Q_p$-linearly
independent on~$J(F\otimes\Q_p)$ and vanish on~$J(F)$. Restricting to~$X$ gives us $p$-adic locally analytic functions \begin{equation}\label{rhochab}
  \rho_k\colon X(F\otimes\Q_p)\to\Q_p
\end{equation}
for $1\leq k\leq dg-r$, such that $ \rho_k$ vanishes on  $X(F)$. This construction is only useful when~$r\leq d(g-1)$, when it provides us with at least $d$ functions which we might reasonably hope to cut out a finite set.

Siksek discusses an explicit method to compute $X(F)$ using these
functions and applies it to solve certain generalised Fermat equations. For
algorithms to compute the integrals~\eqref{rhochab}, as well as more general Coleman
integrals see~\cite{BBK} and~\cite{balakrishnantuitman}.

\subsection{Summary of quadratic Chabauty over number fields}
\label{Section: QC summary}

The quadratic Chabauty method is an extension of Chabauty's method. The aim of quadratic Chabauty over number fields is to determine $X(F)$ even in some cases when the inequality $r\leq d(g-1)$ fails.
Similar to Chabauty's method, the central idea of quadratic Chabauty is to compute non-constant Coleman
analytic $\Q_p$-valued functions \[\rho\colon X(F\otimes\Q_p)\to\Q_p\] and
finite sets $\Upsilon$ such that 
\[
  \rho(X(F))\subseteq \Upsilon\,.
\]
Following~\cite[Section~1]{BDMTV_split}, we call $(\rho,\Upsilon)$ a \emph{quadratic Chabauty pair}. 
This is similar to linear Chabauty, where we have $\Upsilon =\{0\}$ and $\rho$ is of the form~\eqref{fscoordfree}.

In quadratic Chabauty, $\rho$ and $\Upsilon$ are
determined by $p$-adic heights, as we discuss below.

If $F=\Q$, 
a quadratic Chabauty pair $(\rho, \Upsilon)$ ensures that $\rho^{-1}(\Upsilon)$ is a finite superset of
$X(\Q)$, since 
non-constant Coleman analytic functions on curves have finite fibres.
If we can compute $\rho$ and $\Upsilon$ explicitly, then we can
use this to find the $\Q$-rational points on $X$.

Here we discuss the case of an arbitrary number field $F$.
The linear Chabauty method exploits linear relations in the image of the
abelian log map~\eqref{log2}. 
 In contrast, the goal of quadratic Chabauty is to produce quadratic
relations in the image. 

As above, we can only expect to cut out a finite superset of $X(F)$ in this way if we can find at least $d=\dim_{\Q_p}X(F\otimes\Q_p)$ Chabauty pairs $(\rho_i,\Upsilon_i)$. 
 If this can be done explicitly, then we may compute
the intersection 

\begin{equation}
    \label{Eqn: QC intersections}
    \bigcap _{i=1}^d \rho_i^{-1}(\Upsilon_i)\supseteq
X(F).
\end{equation}

In our case, the quadratic Chabauty pair is obtained from the theory of
Nekov\'a\v{r} heights \cite{nekovar1990p}. We sketch the construction here,
and we give more detail in the rest of this section. 
We make the following strong assumption, since it is satisfied for
our example in Section~\ref{sec3} and simplifies the exposition considerably.
See~\S\ref{subsec:extensions} for generalisations.

\begin{Assump}\label{A:log}
  We shall assume the abelian log map~\eqref{log2} is an isomorphism, and consequently that $r=dg$. We also assume $r_{\NS}\geq 2$.
\end{Assump} 

Recall that we write 
\[
T = \Lie(\Res^{F}_{\Q}(J))_{\Q_p} = H^0(X,\Omega^1)^*\otimes_{\Q}\Q_p
\]
for the target of the abelian logarithm map, which is the product over $\fp \mid p$ of the vector spaces
\[
T_{\fp}= \Lie(\Res^{F_{\fp}}_{\Q_p}(J_{F_{\fp}})) = H^0(X_{F_{\fp}},\Omega^1)^*\,.
\]

Given a continuous {id\`ele class character}
$\chi\colon\Adeles_F^\times/F^\times \to \Q_p$ 
and, for each $\fp\mid p$, a splitting $s_\fp$ of the Hodge filtration
$T_\fp^*\subseteq H^1_{\dR}(X_{F_{\fp}})$, Nekov\'a\v{r} defines a global bilinear $p$-adic height pairing
\[
h=h^\chi\colon (J(F)\otimes\Q_p)\times(J(F)\otimes\Q_p) \to \Q_p
\]
(see~\S\ref{subsec:Nekovar}). One can turn this pairing into a function on~$X(F)$ by choosing a suitable cycle $Z$ of 
$X\times X$. If~$E_Z\in\End(J)$ denotes the corresponding endomorphism, then we denote by $h_Z\colon X(F) \to \Q_p$ the function given by
\[
h_Z(x) = h([x-b],E_Z([x-b])+c_Z) \,,
\]
with $c_Z\in J(F)$ the Chow--Heegner point associated to~$Z$; see~\cite[Remarks~3.11
and~5.6]{BDMTV_split}.

\begin{Remark}
    We will freely use the identifications
    \[
    \NS(J) = \End^+(J) = \mathrm{Corr}^+(X,X)
    \]
    between the N\'eron--Severi group~$\NS(J)$ of~$J$, the
    group~$\End^+(J)$ of Rosati-symmetric endomorphisms of~$J$,  and the
    group~$\mathrm{Corr}^+(X,X)$ of correspondences from~$X$ to~$X$ that
    are invariant under transposition \cite[Proposition~5.2.1(b) \&
    Theorem~11.5.1 \&
    Proposition~11.5.3]{birkenhake-lange:complex_abelian_varieties}. That
    is, we will want to think of elements of~$\NS(J)$ sometimes as
    endomorphisms, and sometimes as correspondences, as needed. We will
    also tacitly permit ourselves to represent elements
    of~$\mathrm{Corr}(X,X)$ by divisors on~$X\times X$. By the discussion
    in~\cite[\S2.1]{DLF19}, an element of $\End^+(J)$ that has trace zero corresponds to an element $\ker(\NS(J)\to\NS(X))$, and such an element was referred to as a \emph{nice correspondence} in \cite{BDMTV_split}. 
\end{Remark}

Balakrishnan and Dogra show that the height function~$h_Z$ admits a local decomposition 
\[
h_Z(x) = \sum_vh_{Z,v}(x) \,,
\]
where the sum is taken over finite places~$v$ of~$F$ and
\[
h_{Z,v}\colon X(F_v) \to \Q_p
\]
is the so-called \emph{local height} function. For~$v\nmid p$, the image~$\Upsilon_v$ of~$h_{Z,v}$ is finite and equals $\{0\}$ for all but finitely
many $v$ \cite[Lemma~5.4]{balakrishnandograQC1}. Consequently, the function
\[
\rho\coloneqq h_Z-\sum_{\fp\mid p}h_{Z,\fp}
\]
takes values in the finite set
\begin{equation}\label{Upsilon}
\Upsilon\coloneqq \left\{\sum_v y_v \,:\, y_v\in \Upsilon_v\right\}
\end{equation}
when evaluated on~$x\in X(F)$.

To produce a quadratic Chabauty pair $(\rho,\Upsilon)$, we need to
explain how to interpret~$\rho$ as a Coleman function
on~$X(F\otimes\Q_p)=\prod_{\fp\mid p}X(F_{\fp})$. The local
heights~$h_{Z,\fp}$ are already Coleman functions, but the global
height~$h_Z$ is only \emph{a priori} a function on~$X(F)$. To extend~$h_Z$
to~$X(F\otimes\Q_p)$, we use Assumption~\ref{A:log} to reinterpret the
height pairing as a $\Q_p$-bilinear locally analytic pairing~$h\colon T\times T\to\Q_p$. This allows us to extend~$h_Z$ to a Coleman function on~$X(F\otimes\Q_p)$ by
\[
h_Z(x) \coloneqq h(\log([x-b]),\log(E_Z([x-b])+c_Z)) \,.
\]
This shows:

\begin{theorem}\label{T:BalDog}
  The function $\rho$ satisfies the following properties:
  \begin{enumerate}[(a)]
    \item $\rho$ is Coleman analytic,
    \item $\rho(X(F))\subset \Upsilon$, and
    \item $\Upsilon$ is finite.
  \end{enumerate}
\end{theorem}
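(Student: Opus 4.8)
The plan is to treat this statement as a repackaging of the construction of $\rho$ carried out above, and to verify each of the three assertions against the appropriate ingredient. The ingredients I will use are: Coleman analyticity of the abelian logarithm and of the local heights $h_{Z,\fp}$ at the primes above $p$; the Balakrishnan--Dogra decomposition of the Nekov\'a\v{r} height into local terms on $X(F)$; and the finiteness of the image of each $h_{Z,v}$ for $v\nmid p$ together with its vanishing for all but finitely many $v$.

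For part (a), I would first recall that, under Assumption~\ref{A:log}, the map $\log\colon J(F)\otimes\Q_p\to T$ of~\eqref{log2} is an isomorphism, so the Nekov\'a\v{r} pairing $h^\chi$ transports to a genuine $\Q_p$-bilinear form $h\colon T\times T\to\Q_p$, which on a finite-dimensional $\Q_p$-vector space is a polynomial function of its two arguments. The extended height is $h_Z(x)=h(\log([x-b]),\log(E_Z([x-b])+c_Z))$ on $X(F\otimes\Q_p)=\prod_{\fp\mid p}X(F_\fp)$. On each factor, $\log\circ\iota_b\colon X(F_\fp)\to T_\fp$ is Coleman analytic, being given by Coleman integration of a basis of $H^0(X_{F_\fp},\Omega^1)$; the endomorphism $E_Z$ acts $\Q_p$-linearly on $T$; and translation by $\log(c_Z)$ is affine. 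Since the class of Coleman analytic functions on $X(F\otimes\Q_p)$ is closed under composition with such polynomial maps and under $\Q_p$-linear combinations, $h_Z$ is Coleman analytic; and the local heights $h_{Z,\fp}$ are Coleman analytic by their explicit description in terms of Coleman integrals and the chosen Hodge splittings $s_\fp$ (see~\S\ref{subsec:Nekovar}). Hence $\rho=h_Z-\sum_{\fp\mid p}h_{Z,\fp}$ is Coleman analytic.

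For part (b), I would fix $x\in X(F)$ and observe that, by construction of $h$, one has $h(\log([x-b]),\log(E_Z([x-b])+c_Z))=h^\chi([x-b],E_Z([x-b])+c_Z)$, so the extended $h_Z$ agrees on $X(F)$ with the Nekov\'a\v{r} height, which decomposes as $h_Z(x)=\sum_v h_{Z,v}(x)$ over the finite places $v$ of $F$ \cite{balakrishnandograQC1}. Subtracting the terms above $p$ gives $\rho(x)=\sum_{v\nmid p}h_{Z,v}(x)$, which lies in $\Upsilon$ by~\eqref{Upsilon}. For part (c), $h_{Z,v}$ has finite image $\Upsilon_v$ for every $v\nmid p$, and $\Upsilon_v=\{0\}$ whenever $X$ has good reduction at $v$, hence for all but finitely many $v$ \cite[Lemma~5.4]{balakrishnandograQC1}; so any sum $\sum_v y_v$ with $y_v\in\Upsilon_v$ has only finitely many nonzero terms, and $\Upsilon$ is finite.

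I expect the only real subtleties — both already handled in the cited work and here merely invoked — to be that the local heights $h_{Z,\fp}$ above $p$ are Coleman analytic, which rests on the unipotent-Albanese / Coleman-integration description of Nekov\'a\v{r}'s local term at $p$ and on the chosen splittings of the Hodge filtration, and that the analytic extension of $h_Z$ supplied by Assumption~\ref{A:log} is compatible on $X(F)$ with the Nekov\'a\v{r} height, which is exactly where the hypothesis that~\eqref{log2} is an isomorphism is needed. Checking that the class of Coleman analytic functions on $\prod_{\fp\mid p}X(F_\fp)$ is stable under the operations above is routine but should be recorded explicitly.
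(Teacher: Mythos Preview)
Your proposal is correct and matches the paper's approach: the paper does not give a separate proof of this theorem, but presents it as an immediate consequence of the construction just described (the sentence before the theorem is literally ``This shows:''), invoking exactly the ingredients you list --- Coleman analyticity of the local heights at $\fp\mid p$, the extension of $h_Z$ via Assumption~\ref{A:log} and the bilinearity of $h$ on $T$, the local decomposition of the global height, and \cite[Lemma~5.4]{balakrishnandograQC1} for finiteness of the $\Upsilon_v$. Your write-up is if anything more explicit than the paper about why each step holds.
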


In other words, $(\rho, \Upsilon)$ defines a quadratic Chabauty pair 
if $\rho$ is non-constant. For $F=\Q$, this is known,
see~\cite[Lemma~3.7]{BDMTV_split}, which summarises results in~\cite{balakrishnandograQC1, balakrishnandograQC2}. 
Consequently, if $F=\Q$, then $\rho^{-1}(\Upsilon)$ is finite.
In general, we also expect $\rho$ to be non-constant, and we can check
this for any given example. Moreover, as discussed in~\S\ref{subsec:Chabauty}, we need at least
$d$ quadratic Chabauty pairs $(\rho_i, \Upsilon_i)$ to have any chance for finiteness. We can obtain different pairs by varying the choices
of~$\chi$ and of~$Z$. 
(As we will discuss in~\S\ref{subsec:solving}, it is advantageous not to also vary the splittings~$s_{\fp}$.)

Since Coleman functions can be expanded into convergent power series on polydiscs, if the set \[\bigcap_i \rho_i^{-1}(\Upsilon_i)\] is finite, we can use Theorem~\ref{T:BalDog} to compute a finite subset of $X(F\otimes \Q_p)$ containing $X(F)$. 

In order to make this result explicit, we have to solve the following algorithmic
problems:
\begin{enumerate}
  \item Compute the action of a nontrivial trace zero cycle~$Z$
    on $H^1_{\dR}(X)$ (see Remark~\ref{R:Z}).
  \item Compute the sets $\Upsilon_v$ for $v\nmid p$ (see~\S\ref{Sec:
    Heights away from p}).
  \item Expand the functions $h_{\fp}=h_{Z,\fp}$ into convergent power
    series on residue polydiscs (see~\S\ref{Sec: htsAbovep} -- \S\ref{subsec:Frob}). 
  \item Expand the function~$h_Z$ into convergent power series on residue polydiscs (see~\S\ref{subsec:solving}). 
 
  \item Find the common roots of a system of $p$-adic power series
    (see~\S\ref{subsec:Hensel}). 
\end{enumerate}
In the remainder of this section, we summarise the theoretical background, and we describe how to solve these problems.

\subsection{Heights on Selmer varieties}\label{subsec:Nekovar}

In this subsection, we review some facts we will need about local and global heights.

\subsubsection{Nekov\'a\v{r}'s heights}\label{subsubsec:Nekovar}
 Nekov\'a\v{r} \cite{nekovar1990p} constructs 
global height pairings
\[H^1_f(G_F, V )\times H^1_f(G_F, V^*(1))\too  \Q_p\]
for reasonably
 behaved $p$-adic representations~$V$ of the absolute Galois group~$G_F$, using $p$-adic Hodge theory. We refer the reader to~\cite[Section 2]{nekovar1990p} 
 for
 the necessary background in $p$-adic Hodge theory and to~\cite[Section
 4]{nekovar1990p}, \cite[Section~3]{BDMTV_split}
 and~\cite[Section~3]{BM_AWS}  for more details of the
 construction. In this subsection, we will only be concerned with the
 $p$-adic Tate module $V=H^1_{\et}(X_{\bar{F}}, \Q_p)^*$. 

The group~$H^1_f(G_F,V)$ (respectively $H^1_f(G_F,V^*(1))$) parametrises extensions of~$\Q_p$ by~$V$ (respectively~$V$ by $\Q_p(1)$) which are crystalline at all places~$\fp\mid p$ and which split $I_v$-equivariantly for all places~$v\nmid p$, where~$I_v$ is the inertia group. If~$e_1\in H^1_f(G_F,V)$ and $e_2\in H^1_f(G_F,V^*(1))$ are classes of extensions~$E_1$ and~$E_2$, respectively, then Nekov\'a\v{r} shows that the cup product~$e_1\cup e_2\in H^2(G_F,\Q_p(1))$ vanishes. In terms of extensions, this means that there exists a representation~$E$ fitting into the commutative diagram
\begin{center}
\begin{equation}\label{diag:mixed}
  \begin{tikzcd}
	&                &                  0 \arrow[d]                        & 0 \arrow[d]           &   \\
	0 \arrow[r] & \Q_p(1) \arrow[d, "="] \arrow[r] & E_2 \arrow[r] \arrow[d]       & V \arrow[r] \arrow[d] & 0 \\
	0 \arrow[r] & \Q_p(1) \arrow[r] & E \arrow[d] \arrow[r]         & E_1 \arrow[r]
          \arrow[d]         &  0 \\
	&                & \Q_p \arrow[d] \arrow[r, "="] & \Q_p \arrow[d]           &   \\
	&                & 0                             & 0                     &  
.
	\end{tikzcd}
\end{equation}
\end{center}
Such an~$E$ is called a \emph{mixed extension} of~$\Q_p$ by $V$ by~$\Q_p(1)$; it is equivalently a $G_F$-representation equipped with a $G_F$-stable filtration and identifications of the graded pieces with~$\Q_p$, $V$, and~$\Q_p(1)$. The set of mixed extensions lifting~$E_1$ and~$E_2$ is a torsor under the group $H^1(G_F,\Q_p(1))$. We write~$\mathrm{MExt}_{G_F,f}(\Q_p,V,\Q_p(1))$ for the set of mixed extensions where~$E_1$ and~$E_2$ satisfy the local conditions mentioned above and write
\[
(\pi_1,\pi_2) \colon \mathrm{MExt}_{G_F,f}(\Q_p,V,\Q_p(1)) \to H^1_f(G_F,V)\times H^1_f(G_F,V)
\]
for the projection sending~$E\mapsto(E_1,E_2)$.

One can consider mixed extensions for local Galois groups $G_v$
and pairs of classes $(e_1,e_2) \in H^1_f(G_v,V)\times H^1_f(G_v,V^*(1))$ in an analogous way. As before, the subscript~$f$ denotes local conditions: if~$v\mid p$, then the extensions should be crystalline; and if~$v\nmid p$ then the extensions should split upon restricting to inertia. (For~$v\nmid p$ a place of good reduction, so the $G_v$-action on~$V$ is unramified, this is equivalent to requiring that the extensions themselves are unramified.) Given a nontrivial continuous homomorphism $\chi_v\colon
F_v^{\times}\to \Q_p$, and if $v\mid p$, also a choice of splitting $s_v$ of the
Hodge filtration $H^0(X_{F_v},\Omega^1)\subseteq
H^1_{\dR}(X_{F_v})$, Nekov\'a\v{r} describes 
a local height function
\[
h_v\coloneqq h_v^{\chi_v}\colon\mathrm{MExt}_{G_v,f}(\Q_p,V,\Q_p(1)) \to \Q_p \,.
\]
We do not discuss Nekov\'a\v{r}'s original construction here; it is described in
detail in~\cite[Section 2]{nekovar1990p},~\cite[Section~3]{BM_AWS}, and ~\cite[\S3.2,
3.3]{BDMTV_split}. Instead, we will focus on explicit formulas,
see~\S\ref{Sec: Heights away from p}--
\S\ref{subsec:Frob}.

In order to combine the local heights into a global height, one chooses the local characters~$\chi_v$ to be the components of a
continuous
{id\`ele class character}  
$\chi\colon\A_F^{\times}/F^{\times}\to \Q_p$.
If $(e_1,e_2)\in H^1_f(G_F,V)\times H^1_f(G_F,V^*(1))$ and $E$ is a
choice of global mixed extension lifting $e_1,e_2$, then Nekov\'a\v{r} shows that 
\[ h(E)\coloneqq h^{\chi}(E) \coloneqq \sum_v h_v(E|_{G_v}) =
\sum_v h^{\chi_v}_v(E|_{G_v})\]
 is independent of the mixed extension and only depends on
 $e_1,e_2$, where $v$ runs through all finite places of $F$. Using Poincar\'e
 duality to identify $V=V^*(1)$, we obtain a function \[h=h^{\chi}\colon H^1_f(G_F, V )\times H^1_f(G_F, V)\too  \Q_p,\]which Nekov\'a\v{r} shows is a bilinear pairing: the \emph{global height pairing} relative to~$\chi$ and the local splittings~$s_\fp$ for~$\fp\mid p$. We remark that the group $J(F)\otimes\Q_p$ embeds inside~$H^1_f(G_F,V)$ via the Kummer map, and so we permit ourselves to conflate the pairing~$h$ with its restriction to~$(J(F)\otimes\Q_p)\times(J(F)\otimes\Q_p)$. (Assuming the Tate--Shafarevich Conjecture, one would even have $H^1_f(G_F,V)=J(F)\otimes\Q_p$, but we don't strictly need this.)

\subsubsection{A mixed extension via twisting}\label{S:HeightsTwisting}

Recall that we are aiming to use the theory of heights and an element $Z\in\ker(\NS(J)\to\NS(X))$ to build quadratic Chabauty pairs~$(\rho,\Upsilon)$. To do this, Balakrishnan and Dogra associate to~$Z$ and the chosen base point~$b$ a certain $G_F$-representation $A\coloneqq A_{Z,b}$, constructed out of the unipotent fundamental group~$U$ of~$X_{\bar{F}}$ based at~$b$. The representation~$A$ is a mixed extension of~$\Q_p$ by~$V$ by~$\Q_p(1)$. For any~$x\in X(F)$, one can twist the representation~$A$ along the unipotent torsor of paths~$P(b,x)$, to obtain another $G_F$-representation
\begin{equation}\label{Eqn: A(x)}
  A(x) \coloneqq A_{Z,b}(x) \coloneqq A\times_U P(b,x)
\end{equation}
which is also a mixed extension of~$\Q_p$ by~$V$ by~$\Q_p(1)$. We refer to~\cite[\S 5.1]{balakrishnandograQC1} and~\cite[\S 3.3]{balakrishnandograQC2} for more details of
the construction and for proofs of the following properties:

\begin{lemma}\label{L:tauprops} 
  The $G_F$-representation $A(x)$ satisfies:
  \begin{enumerate}[(i)]
    \item For all $x\in X(F)$, the representation $A(x)$ is a mixed extension of
      $G_F$-representations with graded pieces $\Q_p, V, \Q_p(1)$, crystalline at all $\fp\mid p$.

    \item Let 
$c_Z\in J$ be the Chow--Heegner point associated to $Z$ (see~\cite[Remarks~3.11
and~5.6]{BDMTV_split}) and let $E_Z\in \End(J)$ be induced by $Z$. 
Then   for all $x\in X(F)$, we have
      \begin{align*}\pi_1(A(x)) &= \log([x-b]),\\
      \pi_2(A(x)) &= 
      \log(E_Z([x-b])+c_Z)\,.\end{align*} 
  \end{enumerate}
\end{lemma}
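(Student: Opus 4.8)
The plan is to reduce both assertions to known properties of the mixed extension $A = A_{Z,b}$ and of the twisting construction, since $A(x)$ is defined in \eqref{Eqn: A(x)} as the pushout $A \times_U P(b,x)$ along the $U$-action. For part (i), I would first recall that $A$ itself is a mixed extension of $\Q_p$ by $V$ by $\Q_p(1)$, constructed functorially from the $\Q_p$-pro-unipotent fundamental group $U = U(X_{\bar F},b)$ together with the endomorphism data coming from $Z \in \ker(\NS(J) \to \NS(X))$; this is part of the input from \cite{balakrishnandograQC1, balakrishnandograQC2}. The key points are then: (a) twisting by the path torsor $P(b,x)$, which is itself a $G_F$-equivariant $U$-torsor when $x \in X(F)$, preserves the property of being a mixed extension with the same graded pieces, because the torsor $P(b,x)$ is ``trivial on associated graded'' in the appropriate sense (the filtration on $A$ and the weight filtration on $U$ are compatible), and (b) the crystallinity at each $\fp \mid p$ follows from the fact that $X$ has good reduction above $p$ (part of our running hypotheses on $X$), so that $P(b,x)$ and $U$ are crystalline representations there, and mixed extensions built from crystalline pieces in a crystalline way remain crystalline. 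I would cite \cite[\S5.1]{balakrishnandograQC1} and \cite[\S3.3]{balakrishnandograQC2} for the precise statements rather than reprove them.

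For part (ii), the strategy is to identify the two projection maps $\pi_1, \pi_2$ concretely. The projection $\pi_1$ records the class of the sub-extension $E_1$ of $\Q_p$ by $V$, which by construction of $A(x)$ via path-torsor twisting is exactly the image of the path class $[P(b,x)]$ under the quotient $U \to U/[U,U] \cong V$; under the Kummer map identification $H^1_f(G_F,V) \supseteq J(F)\otimes \Q_p$, this class is $\log([x-b])$, i.e. the Albanese image of $x$ relative to $b$. The projection $\pi_2$ records the quotient extension $E_2$ of $V$ by $\Q_p(1)$; here the endomorphism $E_Z$ and the Chow--Heegner point $c_Z$ enter precisely because $A$ was built from $Z$, and unwinding the construction of $A$ (the ``mixed extension via twisting'' of \S\ref{S:HeightsTwisting}) shows that $\pi_2(A(x))$ is the class $\log(E_Z([x-b]) + c_Z)$. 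Both identities are established in \cite[\S5.1]{balakrishnandograQC1} and \cite[\S3.3]{balakrishnandograQC2}, and I would simply invoke those; the content is bookkeeping about which graded piece of the twisted mixed extension computes which logarithm.

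The main obstacle, and really the only non-formal point, is verifying that the twisting operation $A \times_U P(b,x)$ genuinely lands in $\mathrm{MExt}_{G_F,f}$ rather than merely in the category of filtered $G_F$-representations --- that is, checking the local conditions (crystalline at $p$, unramified/split-on-inertia away from $p$) survive twisting. Away from $p$ this uses that $P(b,x)$ has good reduction (so the inertia action is as controlled as that on $U$ and on $A$), and at $p$ it uses $p$-adic Hodge theory for the fundamental group of a curve with good reduction. Since these facts are exactly what \cite{balakrishnandograQC1, balakrishnandograQC2} establish in the case $F = \Q$ and the arguments are insensitive to the base field, the ``proof'' here is essentially a pointer to that literature together with the remark that nothing in those arguments used $F = \Q$; the substantive number-field-specific work appears later, in the decomposition of the global height into local heights and in the analysis above $p$.
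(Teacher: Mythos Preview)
Your proposal is correct and matches the paper's approach exactly: the paper does not prove this lemma at all, but simply states it with the preamble ``We refer to~\cite[\S 5.1]{balakrishnandograQC1} and~\cite[\S 3.3]{balakrishnandograQC2} for more details of the construction and for proofs of the following properties.'' Your write-up is a faithful (and more detailed) summary of what those references contain, together with the observation that the arguments are insensitive to the base field~$F$.
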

One can similarly define a $G_v$-representation $A_v(x)$ for $x\in X(F_v)$
and $v$ any finite place of $F$. Lemma~\ref{L:tauprops} extends to
$A_v(x)$; in particular $A_v(x)$ is crystalline if $v\mid p$.

We shall abuse notation, and for any $x\in X(F_v)$ write 
\begin{equation*}
h_v(x)\coloneqq h_v(A_v(x)) 
\end{equation*}
(where we assume that $Z$, $\chi_v$ and, if $v\mid p$, $s_v$
have been fixed). 
We write $h_v^{\chi_v}(x)$ when we want to emphasise the dependence on $\chi_v$.

\begin{Remark}
For any  degree $0$ divisor on $X$, Nekov\'a\v{r}
  \cite[Section 5]{nekovar1990p} describes a class in $H^1_f(G_F,V)$ via an
  \'etale Abel--Jacobi map. Balakrishnan and Dogra construct a divisor $D_{Z,b}(x)$ (see~\cite[Definition
    6.2]{balakrishnandograQC1}) such that the representation $A_{Z,b}(x)$
    in \eqref{Eqn: A(x)} is a mixed extension of the images of the divisors $x-b$ and
    $D_{Z,b}(x)$ under the \'etale Abel--Jacobi map.
\end{Remark}

\subsubsection{Local heights away from $p$}\label{Sec: Heights away from p}
For $v\nmid p$, the only information we need about $h_v$ is that it factors
through Kim's higher Albanese map $j_v$ . By
\cite[Corollary 0.2]{kimtamagawa}, this map has finite image, and this image is~0 for $v$ of
potentially good reduction \cite[Lemma 3.2]{BDMTV_split}.

If~$v$ is a finite place of~$F$ not dividing~$p$, then the local
height~$h_v^\chi$ is more or less determined by the reduction type of~$X$
at~$v$ (see~\cite{bettsdogra}). Let~$L/F_v$ be a finite extension, and
let~$\cX/O_L$ be a model
of~$X_L$ which is regular and (we may assume) semistable.
Any $x\in X(L)$ extends to an $O_L$-point of~$\cX$ by the valuative criterion for properness, and hence has a reduction $\bar x\in\cX(l)$ where~$l$ is the residue field of~$L$. The fact that~$\cX$ is regular ensures that~$\bar x$ is a smooth point of the special fibre, so lies on a unique irreducible component. Regarding the relation between reduction and local heights, we will need only the following.

\begin{lemma}\label{Lemma: BettsDogra height}
    Suppose that~$x$ and~$y$ are $K_v$-points of~$X$ reducing onto the same component of the special fibre of~$\cX$. Then $h_v^\chi(x)=h_v^\chi(y)$. In particular, if~$x$ reduces onto the same component as the base point~$b$, then~$h_v^\chi(x)=0$.
\end{lemma}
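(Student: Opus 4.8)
The plan is to reduce the statement to the two key inputs already cited in the surrounding text: the factorisation of $h_v$ through Kim's higher Albanese map $j_v$, and the description of local heights away from $p$ in terms of the reduction type of $X$ at $v$ (following \cite{bettsdogra}). The strategy is to show that, for $x$ and $y$ reducing onto the same component of the special fibre of the chosen regular semistable model $\cX/O_L$, the mixed extensions $A_v(x)$ and $A_v(y)$ have the same image under $j_v$, so that $h_v^\chi(x) = h_v^\chi(j_v(A_v(x))) = h_v^\chi(j_v(A_v(y))) = h_v^\chi(y)$.

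First I would recall that $h_v^\chi$ depends on the mixed extension $A_v(x)$ only through its class in the appropriate local Selmer-type quotient, and — by the factorisation through $j_v$ — ultimately only through a quantity that is locally constant on $X(L)$ in a way controlled by the combinatorics of the special fibre. The key geometric observation is that the $G_v$-representation $A_v(x)$, and hence its image under the higher Albanese map, is determined by the reduction behaviour of the divisor classes $[x-b]$ and $D_{Z,b}(x)$ (see the Remark following Lemma \ref{L:tauprops}); these in turn depend only on the component of $\cX_l$ hit by $\bar x$, since intersection multiplicities with vertical divisors — which is what governs the non-archimedean local height — are constant along a fixed component. Concretely, I would pass to a finite extension $L$ over which both $x$ and $y$ are defined, extend them to $O_L$-points of $\cX$ by the valuative criterion for properness, and note that $\bar x$ and $\bar y$ lie on the same irreducible component by hypothesis; then the combinatorial recipe of \cite{bettsdogra} for $h_v^\chi$ in terms of the dual graph and the positions of the reductions yields equal values. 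The final clause follows immediately: taking $y = b$, if $x$ reduces onto the component of $b$ then $h_v^\chi(x) = h_v^\chi(b)$, and $h_v^\chi(b) = 0$ because $A_v(b)$ is the trivial (split) mixed extension, so its class in the image of $j_v$ is zero.

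The main obstacle I anticipate is making precise the claim that $j_v(A_v(x))$ depends only on the component of $\bar x$, rather than on $\bar x$ itself: a priori the higher Albanese class could detect finer information than the component. Resolving this requires invoking that $j_v$ has finite image \cite[Corollary 0.2]{kimtamagawa} together with the explicit identification in \cite{bettsdogra} of that image with a quotient of the component group data of $\cX$ (equivalently, that the local height factors through a homomorphism out of the component group, as in the classical theory of Néron local heights at places of bad reduction). Once that identification is in hand, the statement is essentially formal, since $x$ and $y$ determine the same point of the component group precisely when $\bar x$ and $\bar y$ lie on the same component. I would therefore structure the write-up as: (1) reduce to comparing $j_v$-images; (2) cite \cite{bettsdogra} for the component-group factorisation; (3) conclude, handling the base-point case as the special instance $y = b$.
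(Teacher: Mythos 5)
Your proposal uses the same two ingredients as the paper's proof: the factorisation of $h_v^\chi$ through Kim's higher Albanese map $j_v$ (via \cite[Lemma~12.1.1]{bettsdogra}), and the fact that $j_v(x)$ depends only on the component of $\cX_s$ onto which $x$ reduces (\cite[Theorem~1.1.2]{bettsdogra}). The overall architecture and the handling of the $y=b$ special case are correct, and the ``obstacle'' you flag at the end is precisely the content of the cited theorem in \cite{bettsdogra}, so you have correctly located the key input.

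There is, however, one step you pass over too quickly. The model $\cX$ is regular and semistable over $O_L$ for a possibly strictly larger extension $L/F_v$, whereas the height you are trying to compare is $h_v^\chi$, defined relative to the local character $\chi_v$ on $F_v^\times$. The reference \cite[Theorem~1.1.2]{bettsdogra} applies to the Albanese map over the field over which the regular semistable model lives, i.e.\ it controls $j_L$ (equivalently $h_L^\chi$), not directly $h_v^\chi$. Your write-up says you would ``pass to a finite extension $L$ \dots\ then the combinatorial recipe of \cite{bettsdogra} for $h_v^\chi$ \dots\ yields equal values,'' but this is conflating $h_L^\chi$ with $h_v^\chi$. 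To close the gap you need the base-change compatibility $h_L^\chi=[L:F_v]\cdot h_v^\chi$ (this comes from the normalisation of the invariant map $H^2(G_v,\Q_p(1))\cong\Q_p$ under restriction, as in \cite[Corollary~7.2.1]{neukirch-schmidt-wingberg:cohomology_number_fields}), which lets you deduce $h_v^\chi(x)=h_v^\chi(y)$ from $h_L^\chi(x)=h_L^\chi(y)$. This is exactly the first, short paragraph of the paper's proof, and it is not optional: without it the model-theoretic argument lives at the wrong level. With that normalisation added, your argument is complete and essentially identical to the one in the paper.
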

\begin{proof}
    If we write $h_L^\chi$ for the local height on the base-changed
    curve~$X_L$ associated to the local character $\chi|_{L^\times}$, then we have $h_L^\chi=[L:F_v]\cdot h_v^\chi$ (from the normalisation of the isomorphism $H^2(G_v,\Q_p(1))\cong\Q_p$, see \cite[Corollary~7.2.1]{neukirch-schmidt-wingberg:cohomology_number_fields}). So it suffices to prove the result under the assumption that~$L=F_v$.

    In this case, the local height $h_v^\chi(x)$ is determined by the image
    of~$x$ under the so-called higher Albanese map $j_v\colon X(F_v) \to
    H^1(G_v,U_Z)$ from Chabauty--Kim theory, where $U_Z$ is the depth $2$ Galois-stable quotient of the $\Q_p$-pro-unipotent \'etale fundamental group of~$X_{\bar F_v}$ constructed in~\cite{balakrishnandograQC1}; see
    \cite[Lemma~12.1.1]{bettsdogra}.  

    If~$x$ and~$y$ reduce onto the same component of the special fibre, then $j_v(x)=j_v(y)$ by \cite[Theorem~1.1.2]{bettsdogra} and so we are done.
\end{proof}

\subsubsection{Local heights above $p$ }\label{Sec: htsAbovep}

The techniques and algorithms for local heights at primes $\fp\mid p$
remain largely unchanged from the case over $\Q$. Recall that we have restricted our attention to split primes ($F_{\fp}=\Q_p$) to reduce the computations of heights
above $p$ to repeating the algorithms in \cite{BDMTV_split,
balakrishnan_dogra_müller_tuitman_vonk_2023} $d$ times. We summarise the
method here to keep this exposition reasonably self-contained.
See~\cite[Section~3]{BDMTV_split} or~\cite[\S3.3.2]{BM_AWS} for more details.

For a prime $\fp\mid p$ and a mixed extension $E$ of $G_\fp$-representations as
discussed in~\S\ref{subsec:Nekovar},
Nekov\'a\v{r} constructs the local height $h_v(E)$ using the
$F_\fp$-algebra 
$$D_{\cris}(E)\coloneqq (E\otimes B_{\cris})^{G_{\fp}}\,,$$
where $B_{\cris}$ denotes the crystalline period ring of Fontaine over
$F_{\fp}=\Q_p$ \cite{fontaine1994corps}.
Then $D_{\cris}(E)$ is a mixed extension of filtered $\phi$-modules: it
comes equipped with an automorphism $\phi$ (a Frobenius structure) and a decreasing exhaustive
filtration $\Fil$, and has graded pieces $\Q_p$, 
$V_{\dR}\coloneqq H^1_{\dR}(X_{F_{\fp}})^*=D_{\cris}(V)$, and $D_{\cris}(\Q_p(1))$, analogous to the weight
filtration of mixed extensions of Galois representations. We obtain
projections $\pi_1(E)$ and $\pi_2(E)$ as in~\S\ref{Section: QC summary}.
For instance, we have 
$$
D_{\cris}(\Q_p\oplus V\oplus \Q_p(1)) = \Q_p\oplus
V_{\dR}\oplus D_{\cris}(\Q_p(1)) \eqqcolon M_0\,,
$$
where the filtration on $M_0$ is given by ${\Fil}^1 = {0}$ and 
  ${\Fil}^0 = \Q_p \oplus {\Fil} ^0 V_{\dR}$, with ${\Fil}
  ^0 V_{\dR}$ induced by the Hodge filtration by duality.
  The Frobenius structure on $M_0$ is induced by the Frobenius structure on its
  graded pieces, where the action on $\Q_p$ is trivial, the Frobenius
  structure on $V_{\dR}$ is induced from the usual Hodge filtration on
  $H^1_{\dR}(X_{F_{\fp}})$ by duality, and the action on
  $D_{\cris}(\Q_p(1))$ is multiplication by
  $1/p$.
There is a unique 
$\Q_p$-linear isomorphism
\[\lambda^{\phi}\colon  M_0\xrightarrow{\sim}
D_{\cris}(E)\,,\]
which respects the $\phi$-structure and acts as the identity on the graded pieces of the mixed extensions. Moreover, there is a $\Q_p$-linear isomorphism
\[\lambda^{\Fil}\colon M_0\xrightarrow{\sim}D_{\cris}(E)\,,\]
 which respects the Hodge filtration and also acts as the identity on the graded pieces of the mixed extensions. 
More precisely, there is a choice for $\lambda^{\Fil}$ and suitable
bases such that, in $1\times 2g\times 1$-block matrix notation,  we have
  \begin{equation}\label{E:lambdas}
  \lambda^{\phi}= \begin{pmatrix}
  1 & 0 & 0\\
  \vec{\alpha}_{\phi} & I_{2g} & 0\\
  \gamma_{\phi} & \vec{\beta}_{\phi}^{\intercal} & 1
\end{pmatrix}\,,
\quad \lambda^{\Fil}=\begin{pmatrix}
  1 & 0 & 0\\
  0 & I_{2g} & 0\\
  \gamma_{\Fil} & \vec{\beta}_{\Fil}^{\intercal} & 1
\end{pmatrix}\,.
  \end{equation}

Recall that the local height $h_\fp(E)$ depends on a homomorphism
$\chi_{\fp}\colon
F_{\fp}^{\times}\to \Q_p$
 and a splitting $s_{\fp}$ of the Hodge filtration
$H^0(X_{F_{\fp}},\Omega^1)\subseteq H^1_{\dR}(X_{F_{\fp}})$. By duality, the latter induces
projections%
\begin{equation*}
  s_1,\,s_2\colon V_{\dR}\to s_{\fp}(V_{\dR}/\Fil^0)\oplus
  \Fil^0\simeq V_{\dR}\,.
\end{equation*}
By Kummer theory, $\chi_{\fp}$ induces a homomorphism 
\[\HH_f^1(G_{\fp},
\Q_p(1)) \simeq O_{F,\fp}^*\otimes_{\hat{\Z}}\Q_p = \Z_p^*\otimes_{\hat{\Z}}\Q_p \to \Q_p\,,\]
which we also denote by $\chi_{\fp}$.
With this auxiliary data, the local height $h_{\fp}(E)$ and the
projections $\pi_i(E)$ can be written in
terms of the entries of the matrices~\eqref{E:lambdas} as follows.
See~\cite[Lemma~5.5]{BDMTV_split} and~\cite[Equation~(41)]{BBBLMSTV_padic}.
\begin{lemma}\label{Lemma :htformula}
  We have
  \[h_{\fp}(E) 
  = \chi_{\fp}(\gamma _\phi  - \gamma _{\Fil} - \vec{\beta} _\phi ^{\intercal}\cdot
  s_1(\vec{\alpha} _\phi ) - \vec{\beta} ^{\intercal}_{\Fil}\cdot s_2(\vec{\alpha} _\phi
  ))\text{.}\] 
  Moreover, with respect to the basis of $T_{\fp}$ induced by the choice of basis
  above, we have
\begin{align}\label{eq:mixedcpts}
  \pi_1(E)\otimes \pi_2(E)= {\vec{\alpha}}_{\phi}^{\intercal}\cdot \left(\begin{array}{c} I_g
 \\
  \hline 0_g \end{array} \right) \otimes  \left({\vec{\beta}}_{\phi}^{\intercal} -
  {\vec{\beta}}_{\Fil}^{\intercal}\right)\cdot \left(\begin{array}{c} 0_g \\ \hline I_g \end{array} \right). 
\end{align}

\end{lemma}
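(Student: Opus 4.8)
The plan is to reduce to the case over~$\Q_p$ that is already treated in the literature and then re-derive the two formulas by unwinding Nekov\'a\v{r}'s construction through the crystalline Dieudonn\'e functor. Since $p$ splits completely in~$F$ we have $F_\fp=\Q_p$, so $E$ is a mixed extension of $G_{\Q_p}$-representations of exactly the shape considered in \cite{BDMTV_split, BBBLMSTV_padic}; concretely the first displayed identity is the analogue of \cite[Lemma~5.5]{BDMTV_split} and the second of \cite[Equation~(41)]{BBBLMSTV_padic}, and it remains to recall why they hold.

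First I would apply $D_{\cris}(-)=(-\otimes B_{\cris})^{G_\fp}$ to the weight filtration of~$E$. As the graded pieces $\Q_p,V,\Q_p(1)$ and the total representation are crystalline, $D_{\cris}$ is exact here and respects the given trivialisations of the graded pieces, yielding the mixed filtered $\phi$-module $D_{\cris}(E)$ with associated graded $\Q_p, V_{\dR}, D_{\cris}(\Q_p(1))$. By Nekov\'a\v{r}'s construction, $h_\fp(E)$ is obtained by applying $\chi_\fp$ --- via the Kummer identification $H^1_f(G_\fp,\Q_p(1))\simeq\Z_p^\times\otimes_{\hat\Z}\Q_p$ --- to the class measuring the failure, in the $\Q_p(1)$-direction, of the Frobenius-equivariant trivialisation $\lambda^\phi$ to agree with the filtration-compatible trivialisation $\lambda^{\Fil}$ of~\eqref{E:lambdas}, rigidified by the splitting $s_\fp$. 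So I would compute the unipotent automorphism of~$M_0$ given by the relevant composite of $\lambda^\phi$ and $(\lambda^{\Fil})^{-1}$: in block form its middle-left entry is $\vec\alpha_\phi$ and its lower-left entry has the shape $\gamma_\phi-\gamma_{\Fil}$ plus a bilinear term $\vec\beta^{\intercal}\vec\alpha_\phi$, where which $\vec\beta$ occurs depends on the order of composition. The point of $s_\fp$ is to interpolate canonically between these two choices: writing $\vec\alpha_\phi=s_1(\vec\alpha_\phi)+s_2(\vec\alpha_\phi)$ for the decomposition along $V_{\dR}=s_\fp(V_{\dR}/\Fil^0)\oplus\Fil^0$ and keeping track of the resulting corrections turns the corner entry into $\gamma_\phi-\gamma_{\Fil}-\vec\beta_\phi^{\intercal}s_1(\vec\alpha_\phi)-\vec\beta_{\Fil}^{\intercal}s_2(\vec\alpha_\phi)$, and applying $\chi_\fp$ gives the stated height formula. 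For the second identity I would read off $\pi_1(E)$ and $\pi_2(E)$ straight from the trivialisations: in a basis of $V_{\dR}$ adapted to the Hodge filtration and the Poincar\'e pairing, with $\Fil^0 V_{\dR}$ spanned by the last $g$ coordinates, $\pi_1(E)$ is the image of $\vec\alpha_\phi$ in $V_{\dR}/\Fil^0\cong T_\fp$, i.e.\ $\vec\alpha_\phi^{\intercal}\cdot\left(\begin{smallmatrix}I_g\\0_g\end{smallmatrix}\right)$, while dually $\pi_2(E)$ is $(\vec\beta_\phi^{\intercal}-\vec\beta_{\Fil}^{\intercal})\cdot\left(\begin{smallmatrix}0_g\\I_g\end{smallmatrix}\right)$; tensoring gives~\eqref{eq:mixedcpts}, and Lemma~\ref{L:tauprops}(ii) then identifies this with $\log([x-b])\otimes\log(E_Z([x-b])+c_Z)$ when $E=A_\fp(x)$.

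The $p$-adic Hodge theory inputs (exactness of $D_{\cris}$ on the crystalline representations at hand, compatibility with the weight filtration and the graded-piece identifications) and the subsequent matrix bookkeeping are routine once conventions are fixed. The step I expect to be the main obstacle is nailing down the dependence on~$s_\fp$: one must check that the $s_1,s_2$-corrected corner entry is genuinely independent of the residual freedom in $\lambda^{\Fil}$ and reproduces Nekov\'a\v{r}'s normalisation of $h_\fp$ (and not some other natural-looking combination of $\vec\alpha_\phi,\vec\beta_\phi,\vec\beta_{\Fil}$), which is where the signs and the precise placement of $s_1$ versus $s_2$ have to be tracked with care.
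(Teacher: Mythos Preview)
Your proposal is correct and matches the paper's approach exactly: the paper does not prove this lemma at all but simply cites \cite[Lemma~5.5]{BDMTV_split} for the height formula and \cite[Equation~(41)]{BBBLMSTV_padic} for~\eqref{eq:mixedcpts}, which are precisely the references you invoke. Your sketch of the underlying derivation (comparing $\lambda^\phi$ and $\lambda^{\Fil}$ and tracking the $s_\fp$-correction) goes beyond what the paper provides, but is consistent with how those cited results are established.
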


For $x\in X(F_{\fp})$ we denote
\[M(x)\coloneqq D_{\cris}(A_{\fp}(x))\,.\]
The local height $h_\fp(x)$ and the element $\pi(A_{\fp}(x))
\in T_\fp\otimes T_{\fp}$ are then determined by the structure of $M(x)$
as a filtered $\phi$-module via Lemma~\ref{Lemma :htformula}.

To compute the filtration and Frobenius structure of $M(x)$, we follow the
strategy in \cite[\S 4]{BDMTV_split} and \cite[\S 5]{BDMTV_split}
respectively. The authors of \cite{BDMTV_split} consider a unipotent vector
bundle $\mcM\coloneqq \mcA_{Z,b}$ on $X$ with connection, which has a
Frobenius structure (from its analytification, which is a unipotent
$F$-overconvergent isocrystal, via a comparison theorem of
Chiarellotto and Le Stum \cite[Prop. 2.4.1]{chiarellottoLeStum}) and a Hodge filtration, which can be
computed using universal properties proved by Hadian
\cite{hadian2011motivic} and Kim \cite{kim2009unipotent}. One then uses
Olsson's comparison theorem \cite{olssoncomparsion} in non-abelian
$p$-adic Hodge theory to show (see~\cite[Lemma~5.4]{BDMTV_split})
\[M(x)\cong x^*\mcM^{\*}.\]

We outline the steps of the computation in the following sections,
following \cite[\S 4]{BDMTV_split} and \cite[\S 3.2, \S 4.1]{balakrishnan_dogra_müller_tuitman_vonk_2023} to compute the Hodge filtration and \cite[\S4]{BDMTV_split} and \cite[\S 3.2 \S 4.2] {balakrishnan_dogra_müller_tuitman_vonk_2023} to compute the Frobenius structure. 

\subsubsection{Hodge filtration}\label{subsec:Hodge}
To compute the Hodge filtration associated to the unipotent bundle $\mcM$,
we first compute its restriction to an affine $Y\subset X$, as
unipotent bundles on $Y$ are trivial. 
Since we will use Tuitman's algorithm
\cite{tuitman2016counting, tuitman2017counting} for reduction in
rigid cohomology, we choose $Y$ to be a plane curve $Y\colon Q(x,y)=0$ that
satisfies Tuitman's conditions
(see~\cite[Assumption~3.10]{balakrishnan_dogra_müller_tuitman_vonk_2023})
for all embeddings $\sigma_{\fp}$.
In particular, $Q$ is monic in $y$.
We also assume that the base point $b$ is in $Y(F)$, and is $\fp$-integral for all
$\fp\mid p$.
Although the entries of the matrix representing
$\lambda^{\fil}$ in~\eqref{E:lambdas}
are in $F_{\fp}\simeq \Q_p$, we compute the Hodge filtration globally.

We introduce some notation to describe the action of the connection and
filtration on $\mcM|_{Y}$. Let $D=X\setminus Y$ and $\delta=\#D(\overline{F})$.  We suppose we are given meromorphic $1$-forms
  $\omega_0,\ldots,\omega_{2g-2+\delta}$ on $X$ satisfying the following four conditions:
\begin{itemize}
  \item All $\omega_i$ are $O_{\fp}$-integral for all~$\fp\mid p$. \hfill \puteqnum \label{eq:211}
  \item $\omega_i|_Y\in H^0(Y, \Omega^1)$  for all $i$.  \hfill \puteqnum \label{eq:212}
    \item The set $ \{\omega_i\}_{i=0}^{g-1}$ is a basis of
      $H^0(X,\Omega^1)$ and $\{\omega_i\}_{i=0}^{2g-1}$ is a set of
      differentials with residue zero such that their classes form a
      basis of $H^1_{\dR}(X)$ which is symplectic  with respect to the cup product
      pairing. 
      Write $\vecc{\omega}\coloneqq(\omega_i)_{i=0}^{2g-1}$. \hfill \puteqnum \label{eq:213}
    \item  The forms  $\omega_i$, where $i\in \{2g ,\ldots, 2g-2+\delta\}$,
      have at most simple poles on $X$, 
      and extend the basis of $H^1_{\dR}(X)$ to 
      a basis of $H^1_{\dR}(Y)$. \hfill \puteqnum \label{eq:214}
\end{itemize}    

Using the work of Tuitman \cite{tuitman2016counting, tuitman2017counting}, we can compute differentials $\omega_i$ that satisfy these conditions, except that the first $2g$ might not be symplectic. This can be easily rectified using linear algebra.
We also suppose that we have a matrix $Z\in M_{2g \times 2g}(F)$ that represents the action of our
  chosen cycle $Z\in
  \ker(\NS(J)\to \NS(X))$ on $H^1_{\dR}(X)$ with respect to 
  $\vecc{\omega}$.

  \begin{Remark}\label{R:Z}
  When $X$ is a modular curve, then, as discussed
  in~\cite[\S3.5.2]{balakrishnan_dogra_müller_tuitman_vonk_2023}, we may
  use a suitable linear combination of powers of the Hecke operator $T_p$, which we may compute using the Eichler--Shimura relation, see \S \ref{Subsection: Correspondences}. If~$X$ is not a modular curve, then one can instead compute $\End(J)$ using an algorithm of
    Costa--Mascot--Sijsling--Voight~\cite{CMSV_endo}. The output of this algorithm includes a description of the action of the endomorphism ring on~$H^0(X,\Omega^1)$, but not on all of~$H^1_{\dR}(X)$ as we would need. Computing the action on all of~$H^1_{\dR}(X)$ can sometimes be tricky in practice, see \cite{bettslocalheights}.
  \end{Remark}

\begin{algo}[Computing the Hodge filtration]\label{Algo: Hodge filtration}
 \hfill 

  \textit{Input}: Differentials $\omega_0,\ldots, \omega_{2g-2+\delta}$ as
  in \eqref{eq:211}-\eqref{eq:214}
and a matrix $Z\in M_{2g\times2g}(F)$  representing  the action of a nontrivial cycle in
  $\ker(\NS(J)\to \NS(X))$ on $H^1_{\dR}(X)$ with respect to $\vec{\omega}$.
   \\ \textit{Output}: $\gamma_{\Fil,\mcO(Y)}\in F(1)\otimes \mcO(Y) $ and
   $\vec{\beta}_{\Fil,\mcO(Y)}=(0_g, b_{\Fil,\mcO(Y)})\in F^{2g}$,
   representing the restriction of $\lambda^{\Fil}$ in~\eqref{E:lambdas} to $Y$.

\begin{enumerate}
\item For each $x \in D(\overline{F})$, do the following: \label{Algo Hodge: Step 1}
  \begin{enumerate}
    \item Compute a uniformiser $t_x$. \label{Algo Hodge: Step 1a}
\item Expand all $\omega_i$ into
  power series $\omega_{x,i} \in F(x)[[t_x]]$ and set $\vecc{\omega}_x \coloneqq
      (\omega_{x,i})^{2g-1}_{i=0}$.
 \label{Algo Hodge: Step 1b}
    \item Integrate formally to compute 
      $\overrightarrow{\Omega_x}$ such that $d\overrightarrow{\Omega_x}=\overrightarrow{\omega_x}$.\label{Algo Hodge: Step 1c}
  \end{enumerate}
\item Solve   
 for $\eta=\sum_{i=2g}^{2g-2+\delta}e_i\omega_i$    such that for all $x\in D(\overline{F})$ we have 
    \[
   \Res\left(d\vecc{\Omega_x}^{\intercal}
   Z\vecc{\Omega}_x-\sum^{2g-2+\delta}_{i=2g}e_i\omega_{x,i}\right)=0\,.
 \]
 \label{Algo Hodge: Step 2}
 \item For each $x \in D(\overline{F})$ integrate formally to
   compute $g_x\in
   F(x)((t_x))$ such
   that \[dg_x\ = d\vecc{\Omega}_x^{\intercal} Z\vecc{\Omega}_x-\sum_i
    e_i\omega_{x,i}\,.\]
 \label{Algo Hodge: Step 3}
 \item Let $N=(0_g \hspace{0.1 cm} I_g)^{\intercal}\in M_{2g\times g}(\mcO(Y))$.
   Solve for $b_{\Fil}=(b_g,\ldots, b_{2g-1}) \in F^{g}$ and $\gamma_{\Fil} \in \mcO(Y)$ such that  $\gamma_{\Fil}(b)=0$
    and such that
 
   \[
     g_x + \gamma_{\Fil} - b_{\Fil}^{\intercal}N^{\intercal}\vecc{\Omega}_x -
     \vecc{\Omega}_xZNN^{\intercal} \vecc{\Omega}_x 
   \]

 has trivial principal part for all $x$. \label{Algo Hodge: Step 4}
 \item Return $\gamma_{\Fil,\mcO(Y)}\in F(1)\otimes \mcO(Y) $ and
   $\vec{\beta}_{\Fil,\mcO(Y)}=(0_g, b_{\Fil,\mcO(Y)})\in F^{2g}$. \label{Algo Hodge: Step 5}
\end{enumerate}
\end{algo}

\begin{prop}
  Algorithm~\ref{Algo: Hodge filtration} computes the unique
  filtration-preserving splitting of $M(x)$ of the
  form~\eqref{E:lambdas}. More precisely, for each $\fp | p$, the matrix 
  \[\begin{pmatrix}
  1 & 0 & 0\\
  0 & I_{2g} & 0\\
    \sigma_{\fp}(\gamma_{\Fil,\mcO(Y)}) & \sigma_{\fp}(\vec{\beta}_{\Fil,\mcO(Y)})^{\intercal} & 1
  \end{pmatrix}\]
  represents the restriction to $Y$ of the unique filtration-preserving
  splitting of $M(x)$ of the
  form~\eqref{E:lambdas}.

\end{prop}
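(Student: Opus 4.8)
The strategy is to reduce to the analogous statements over $\Q$ established in \cite[\S4]{BDMTV_split} and \cite[\S\S3.2, 4.1]{balakrishnan_dogra_müller_tuitman_vonk_2023}, using the observation that the Hodge filtration on the unipotent bundle $\mcM=\mcA_{Z,b}$ is an algebro-geometric datum defined already over the base field $F$, hence compatible with base change along each embedding $\sigma_{\fp}\colon F\hookrightarrow F_{\fp}=\Q_p$. It therefore suffices to run the computation once, over $F$, and then specialise; the matrix in the statement is the $\sigma_{\fp}$-specialisation, evaluated at $x$, of the output of Algorithm~\ref{Algo: Hodge filtration}.

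I would first recall the characterisation of the Hodge filtration. By the universal properties established by Hadian \cite{hadian2011motivic} and Kim \cite{kim2009unipotent}, the unipotent bundle with connection $\mcM$ carries a unique filtration $\Fil^{\bullet}\mcM$ by $\mcO_X$-subbundles which is compatible with the weight filtration, induces on the graded pieces $\mcO_X$, $H^1_{\dR}(X)^{*}\otimes\mcO_X$, $D_{\cris}(\Q_p(1))\otimes\mcO_X$ the Hodge filtrations recalled in \S\ref{Sec: htsAbovep} (so $\Fil^1\mcM=0$ and $\Fil^0\mcM$ is a subbundle of rank $g+1$), satisfies Griffiths transversality $\nabla(\Fil^i\mcM)\subseteq\Fil^{i-1}\mcM\otimes\Omega^1_X$, and has a distinguished splitting normalised by acting as the identity in the chosen frame at $b$. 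Since this characterisation makes no reference to the base field, for each $\fp\mid p$ the base change of $\Fil^{\bullet}\mcM$ along $\sigma_{\fp}$ is the Hodge filtration of $\mcM_{F_{\fp}}$, and pulling back along a point $x$ recovers $\Fil^{\bullet}M(x)$ under the isomorphism $M(x)\cong x^{*}\mcM^{*}$ of \cite[Lemma~5.4]{BDMTV_split}. Once the frame of conditions \eqref{eq:211}--\eqref{eq:214} is fixed, the splitting of the shape \eqref{E:lambdas} that carries the Hodge filtration of $M_0$ to $\Fil^{\bullet}M(x)$ is unique by the usual rigidity for filtered modules, as recalled following \eqref{E:lambdas}. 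It thus remains to show that the pair $(\gamma_{\Fil,\mcO(Y)},\vec{\beta}_{\Fil,\mcO(Y)})$ output by the algorithm represents this splitting at the level of the restriction $\mcM|_Y$ to the affine curve $Y\colon Q(x,y)=0$.

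The remaining step matches the algorithm with this characterisation, following \cite[\S4]{BDMTV_split}. Over $Y$ the bundle $\mcM|_Y$ with connection is trivial, and in the chosen frame the connection matrix is the explicit one assembled from $\vec{\omega}$ and from the matrix $Z$ representing the cycle on $H^1_{\dR}(X)$. A section lifting $1\in\mcO_X$ that lies in $\Fil^0\mcM|_Y$ must have its $H^1_{\dR}(X)^{*}$-component in $\Fil^0$, and, by Griffiths transversality, its $D_{\cris}(\Q_p(1))$-component must have differential of the form $d\vec{\Omega}_x^{\intercal}Z\vec{\Omega}_x-\eta$, with $\eta=\sum_{i=2g}^{2g-2+\delta}e_i\omega_i$ a combination of the residue-free meromorphic forms extending the basis of $H^1_{\dR}(X)$ to one of $H^1_{\dR}(Y)$; the residue condition of Step~\ref{Algo Hodge: Step 2} is exactly the condition that such an $\eta$ exist with the formal antiderivative $g_x$ of Step~\ref{Algo Hodge: Step 3} well defined at every puncture. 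Because $\Fil^0\mcM$ is a subbundle of the bundle on all of $X$, this section must in addition extend regularly over each $x\in D(\bar F)$; subtracting the explicit combination of $\vec{\Omega}_x$, $Z$, the matrix $N$ and the unknown $b_{\Fil}$ appearing in Step~\ref{Algo Hodge: Step 4}, which records how the chosen frame degenerates at the puncture, the requirement that the residual principal part vanish for all $x$ determines $b_{\Fil}\in F^{g}$, while $\gamma_{\Fil}(b)=0$ fixes the remaining constant of integration to match the normalisation of $\lambda^{\Fil}$. Applying $\sigma_{\fp}$ and evaluating at $x$ then yields the displayed matrix.

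I expect the only delicate point to be the bookkeeping in the last paragraph --- checking that the differential equations and principal-part conditions of Algorithm~\ref{Algo: Hodge filtration} transcribe precisely ``Griffiths transversality, regular extension across $D$, and the base-point normalisation'' in the chosen frame. This is essentially identical to the argument of \cite[\S4]{BDMTV_split} (written there for a cycle coming from a Hecke correspondence, but formally the same for an arbitrary trace-zero cycle $Z$), so beyond this transcription and the elementary remark that the Hodge filtration descends to $F$ and is therefore compatible with the embeddings $\sigma_{\fp}$, I do not anticipate further difficulties.
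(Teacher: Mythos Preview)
Your proposal is correct and follows essentially the same approach as the paper: both reduce to the results of \cite[\S4]{BDMTV_split} (specifically Lemma~4.10 and Proposition~4.11 there), using that Algorithm~\ref{Algo: Hodge filtration} is the same computation as in the $\Q$-case, carried out over~$F$ and then base-changed along each~$\sigma_{\fp}$. You supply more of the geometric narrative (Griffiths transversality, regular extension across~$D$, base-point normalisation) than the paper's terse citation, but the substance is identical.
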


\begin{proof}
  Since Algorithm~\ref{Algo: Hodge filtration} is essentially the same as
\cite[Algorithm 5.20]{BM_AWS}, the result follows from~\cite[Lemma~4.10, Theorem 4.11]{BDMTV_split}. 
Namely, the existence (and uniqueness) of the $e_i$ in Step~\eqref{Algo Hodge: Step
  2} follow from~\cite[Lemma 4.10]{BDMTV_split}. By~\cite[Proposition
  4.11]{BDMTV_split}, the functions $\gamma_{\Fil}(x)$ and the vector
  $\vec{\beta}_{\Fil}$ give 
  $\lambda^{\Fil}$ as in~\eqref{E:lambdas}.
\end{proof}

\begin{Remark}\label{R:HodgeTrick}
  The analogue of Algorithm~\ref{Algo: Hodge filtration} 
in~\cite{BDMTV_split}
and~\cite{balakrishnan_dogra_müller_tuitman_vonk_2023}
was formulated
entirely over a field $L$ such that 
$D(L) = D(\overline{F})$. Working over such a field
was sufficient for all quadratic Chabauty
examples computed so far. However, even computing the
uniformisers $t_x$ over $L$ becomes difficult when the degree of $L$ is large, as is
the case for the curve $X'_H$.
Steps \eqref{Algo Hodge: Step 1} and \eqref{Algo Hodge: Step 3} only require working with one $x$ (or rather one
$t_x$) at a time, so we can work over the respective residue fields $F(x)$.
For Steps \eqref{Algo Hodge: Step 2} and \eqref{Algo Hodge: Step 4} we combine the results for the individual points $x$ and
apply simple linear algebra; here we embed the fields $F(x)$ into a field $L$ as
above.
\end{Remark}

\subsubsection{Frobenius structure}\label{subsec:Frob}
To compute the Frobenius action on $\mcM$ and $M(x)$, \cite{BDMTV_split}
uses the
theory of unipotent isocrystals. We refer the reader to \cite[Appendix
A.2]{BDMTV_split} for an overview of the underlying theory
and~\cite[Section~5]{BDMTV_split},~\cite[\S4.2]{balakrishnan_dogra_müller_tuitman_vonk_2023},
and~\cite[\S5.2.3]{BM_AWS} for details of the construction.

The crucial ingredient is Tuitman's reduction algorithm 
\cite{tuitman2016counting, tuitman2017counting}. Given a
meromorphic
differential $\xi$ on $X_{F_{\fp}}$ without residues, the algorithm produces a vector of constants
$\vec{c}\in F_{\fp}^{2g}$ and an overconvergent function 
$H$ such that
\begin{equation}\label{E:redn}
  \vec{c}\,{}^{\intercal} \vecc{\omega} + dH  = \xi\,.
\end{equation}

We will 
  normalise the function $H$ by requiring
  $H(b_0)=0$, where
$b_0$ is the unique Frobenius-invariant point in the residue disc of $b$.

In particular, we may use this to compute the action of Frobenius $\phi$ on
$H^1_{\rig}(X _{ F_{\fp}})\cong H^1_{\dR}(X_{F_{\fp}})$:
  \begin{equation}\label{E:Frobomega}
    \phi^*\vecc{\omega} =  \Frob \vecc{\omega} + d\vec{f}\,,\quad \Frob \in
    M_{2g\times2g}(F_{\fp})\,,\; f_0(b_0)=\cdots=f_{2g-1}(b_0)=0\,.
  \end{equation}

  This gives an overconvergent lift of Frobenius on a wide open subspace
  $\mathcal{U}$ of (the analytification of)
  $X_{F_{\fp}}$, containing all residue discs reducing to a point in $Y(\F_p)$
  such that the map $(x,y)\to x$ to $\P^1$ is unramified at this point. Following~\cite{balakrishnantuitman}, we call such residue discs
  \textit{good} (with respect to $Y$), and we call a residue polydisc $\resdisc{x_1} \times \cdots\times
  \resdisc{x_d} \subset X(F\otimes \Q_p)$ \textit{good}  (with respect to $Y$) if all $\resdisc{x_i}$ are good.

In order to find the Frobenius-equivariant splitting of $M(x)$ for $x$ in a good
residue disc $\resdisc{x}$ (see \cite[\S 5.3.2]{BDMTV_split} and \cite[\S 4.2]{balakrishnan_dogra_müller_tuitman_vonk_2023}), one first
computes the Frobenius-equivariant splitting of $M(x_0)$ for the unique Frobenius-invariant point
$x_0\in \resdisc{x}$, and then uses parallel transport to compute the Frobenius-equivariant splitting for $M(x)$. 
To do this, one first applies Tuitman's algorithm to the differential (which has no residues on $X$):
\begin{equation}\label{E:xiexpansion}
  \xi \colonequals(\phi^*\vec{\omega}^{\intercal})Z\vec{f} + (\phi^*\eta
  - p \eta) +(\Frob^{\intercal}Z  \vec{f})^{\intercal} \vec{\omega}\,,
\end{equation}
where $\eta$ is as in Algorithm~\ref{Algo: Hodge filtration}, 
resulting in $\vec{c}$ and $H$ as in~\eqref{E:redn}, 
and then sets 
\begin{equation}\label{E:g}
\vec{g}\colonequals -\Frob^{\intercal} Z  \vec{f} +\vec{c}\,.
\end{equation}

One can then express the Frobenius-equivariant splitting $M(x_0)\to
\Q_p\oplus V_{\dR}\oplus D_{\cris}(\Q_p(1))$ with respect to the basis
$\vecc{\omega}$ as shown in \cite[\S5.3.2]{BDMTV_split}:
\begin{center}
\begin{equation}
        \lambda^{\phi}(x_0) = \begin{pmatrix}
 1 & 0 & 0 \\
          (I_{2g} - \Frob)^{-1} \vec{f} & I_{2g} & 0\\
          \frac{1}{1-p}(\vec{g}^{\intercal}(I_{2g} - \Frob)^{-1}\vec{f} + H) & \vec{g}^{\intercal}(\Frob - p)^{-1} & 1
    \end{pmatrix}(x_0).
\end{equation}
\end{center}

For two points $x_1,x_2$ in the same residue disc, one sets
\begin{center}
    \begin{equation}\label{E:PT} I^{\pm}(x_1,x_2)\coloneqq \begin{pmatrix}
    1 & 0 & 0\\
      \int^{x_2}_{x_1} \vecc{\omega} & I_{2g} & 0\\
    \int^{x_2}_{x_1}\eta\pm\int^{x_1}_{x_2}\vecc{\omega}^{\intercal}Z\vecc{\omega} & \pm \int^{x_2}_{x_1}\vecc{\omega}^{\intercal}Z & 1
\end{pmatrix}.
\end{equation}
\end{center}
Then $I^+(x_1,x_2)$ (respectively $I^-(x_2,x_1)$) represents parallel
transport of the connection from $x_1$ to $x_2$ (respectively from $x_2$ to
$x_1$).

This results in the following algorithm to compute $\lambda^{\phi}$
(see~\cite[Algorithm~5.26]{BM_AWS}):
\begin{algo}[Computing the Frobenius structure]\label{Algorithm: Frobenius structure}\hfill

  \textit{Input}: Differentials $\omega_0,\ldots, \omega_{2g-2+\delta}$ as
  in \eqref{eq:211}-\eqref{eq:214}
and a matrix $Z\in M_{2g\times 2g}(F)$  representing  the action of a nontrivial cycle in
  $\ker(\NS(J)\to \NS(X))$ on $H^1_{\dR}(X)$ with respect to $\vec{\omega}$. The differential $\eta$ as in
  Algorithm~\ref{Algo: Hodge filtration} and a point $x\in Y(F_\fp)$ in a
  good disc.
  \\\textit{Output}: The constant $\gamma^{\phi}$, the row vector
  $\vec{\beta}^{\phi}$ with $2g$ entries, and the column vector
  $\vec{\alpha}^{\phi}$ with $2g$ entries as in~\eqref{E:lambdas}.
\begin{enumerate}
  \item Use Tuitman's algorithm to compute $\Frob$ and $\vec{f}$ as
    in~\eqref{E:Frobomega}.

  \item Apply Tuitman's algorithm to $\xi$ as in~\eqref{E:xiexpansion} to
    compute $\vec{c}$ and $H$ as in~\eqref{E:redn}.
\item Define $\vec{g}$ as in \eqref{E:g} and set \[M(b_0,x_0)\coloneqq \begin{pmatrix}
  1 & 0 & 0\\
  (I-\Frob)^{-1}\vec{f} & I_{2g} & 0\\
  \frac{1}{1-p}\vec{g}^{\intercal}(I-\Frob)^{-1}\vec{f}+\vec{h} & \vec{g}^{\intercal}(\Frob-p)^{-1} & 1
\end{pmatrix}(x_0).\]

\item 
Compute the matrix
\[I^+(x, x_0)I^-(b_0, b)M(b_0,x_0)\eqqcolon\begin{pmatrix}
    1  & 0 & 0\\
    \vec{\alpha}_{\phi} & I_{2g} & 0\\
    \gamma_{\phi} & \vec{\beta}_{\phi} & 1 
 \end{pmatrix}.\]
    and return $\vec{\alpha}_{\phi}, \vec{\beta}_{\phi}, \gamma_{\phi}$.
\end{enumerate}

\end{algo}
The discussion in~\cite[Section~5]{BDMTV_split} shows the following:
\begin{prop}
  Let $\gamma_{\phi}$, $\vec{\beta}_{\phi}$, and $\vec{\alpha}_{\phi}$ be
  as returned by  Algorithm~\ref{Algorithm: Frobenius structure}. Then
  these give the unique Frobenius-invariant splitting of $M(x)$ 
  as in~\eqref{E:lambdas}.

\end{prop}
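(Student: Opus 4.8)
The plan is to reduce the statement to its analogue over~$\Q$, established in the discussion of \cite[Section~5]{BDMTV_split} (see also \cite[Algorithm~5.26]{BM_AWS}), observing that passing to a number field and using restriction of scalars contributes nothing genuinely new at the primes above~$p$. Indeed, since~$p$ splits completely in~$F$, for each~$\fp\mid p$ we have $F_{\fp}\cong\Q_p$, and $M(x)=D_{\cris}(A_{\fp}(x))$ is the filtered~$\phi$-module attached to a local mixed extension of~$G_{\fp}$-representations with graded pieces $\Q_p$, $V_{\dR}$, and $D_{\cris}(\Q_p(1))$ --- exactly the objects occurring over~$\Q_p$ --- so Algorithm~\ref{Algorithm: Frobenius structure} is, prime by prime, an instance of the~$\Q_p$ construction, and it suffices to recall why that construction is correct. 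First I would check that the Frobenius-equivariant splitting of~$M(x)$ of the shape~\eqref{E:lambdas} exists and is unique: the graded pieces are pure of pairwise distinct weights, so there is a unique splitting $M_0\xrightarrow{\sim}M(x)$ that is compatible with~$\phi$ and acts as the identity on the graded pieces (this is the datum underlying Nekov\'a\v{r}'s height construction; cf.\ \cite[\S3]{BDMTV_split}).

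Next I would show that the algorithm actually computes this splitting. Via Olsson's non-abelian $p$-adic comparison theorem \cite{olssoncomparsion} one identifies~$M(x)$ with the pullback along~$x$ of the dual bundle~$\mcM^{\vee}$ (\cite[Lemma~5.4]{BDMTV_split}), which reduces the problem to~$\mcM$ together with its connection and with the Frobenius structure coming from its analytification as an overconvergent isocrystal. At the unique Frobenius-invariant point~$x_0$ in the good residue disc~$\resdisc{x}$, the splitting has the closed form displayed immediately before Algorithm~\ref{Algorithm: Frobenius structure}: the blocks $(I_{2g}-\Frob)^{-1}\vec{f}$ and $\vec{g}^{\intercal}(\Frob-p)^{-1}$ and the scalar entry are read off from Tuitman's reduction algorithm \cite{tuitman2016counting, tuitman2017counting}, applied first to~$\vecc{\omega}$ to produce~$\Frob$ and~$\vec{f}$ as in~\eqref{E:Frobomega}, and then to the residue-free differential~$\xi$ of~\eqref{E:xiexpansion} to produce~$\vec{c}$ and~$H$ as in~\eqref{E:redn} and hence~$\vec{g}$ as in~\eqref{E:g}; here one uses that~$\xi$ has no residues on~$X$, so Tuitman's algorithm applies. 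Parallel transport of the connection inside a good residue disc is implemented by the matrices~$I^{\pm}$ of~\eqref{E:PT}, whose entries are tiny Coleman integrals; composing~$M(b_0,x_0)$ with $I^-(b_0,b)$ (to move the base point from the Frobenius-invariant point~$b_0$ to~$b$) and with $I^+(x,x_0)$ (to move the endpoint from~$x_0$ to~$x$) then yields~$\lambda^{\phi}$ at~$x$ relative to the base point~$b$, with blocks~$\vec{\alpha}_{\phi},\vec{\beta}_{\phi},\gamma_{\phi}$ as returned by the algorithm.

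The step I expect to require the most care --- and the only one that is not a verbatim transcription of \cite[Section~5]{BDMTV_split} --- is the bookkeeping of base points and normalisations: one must verify that~$H$ is normalised by $H(b_0)=0$ and~$\vec{f}$ by $f_i(b_0)=0$ compatibly with the identification $M(x)=x^*\mcM^{\vee}$ for the fixed global base point~$b$, so that the product $I^+(x,x_0)\,I^-(b_0,b)\,M(b_0,x_0)$ genuinely has the form~\eqref{E:lambdas}, and that~$\vec{c}$ combines with $-\Frob^{\intercal}Z\vec{f}$ into the vector~$\vec{g}$ governing the bottom row. This I would match against the computation in \cite[\S5.3.2]{BDMTV_split}, using that~$\phi^*$ acts on the connection via~\eqref{E:Frobomega} so that the~$I^{\pm}$ intertwine the Frobenius-invariant splittings at the two endpoints as required. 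Once this identification is secured, the uniqueness from the first paragraph forces the algorithm's output to be the unique splitting of the form~\eqref{E:lambdas}, which is what we want.
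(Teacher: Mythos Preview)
Your proposal is correct and follows essentially the same approach as the paper: both reduce the claim to the discussion in \cite[Section~5]{BDMTV_split}, with the observation that since~$p$ splits completely in~$F$ the local computation at each~$\fp\mid p$ is verbatim the~$\Q_p$ case. The paper's proof is in fact just a one-line citation to that discussion, whereas you have unpacked the content of that reference (uniqueness from distinct Frobenius weights, the closed form at the Teichm\"uller point, parallel transport via~$I^{\pm}$, and the base-point normalisations); your version is more informative but not logically different.
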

We can also use Algorithm~\ref{Algorithm: Frobenius structure} to compute $M(x)$
for a parametric point in a good disc. In the latter case, the algorithm
returns power series in a local parameter of the disc.

\subsection{Solving for the height pairing}\label{subsec:solving}
We now discuss how to determine the height pairing $h\colon (J(F)\otimes\Q_p)\times(J(F)\otimes\Q_p)\to\Q_p$, which we are reinterpreting as a $\Q_p$-linear map $T\otimes_{\Q_p}T\to\Q_p$ using Assumption~\ref{A:log}. For a point $x\in X(F\otimes\Q_p)$, we write
\[
e(x) \coloneqq \log([x-b])\otimes\log(E_Z([x-b])+c_Z) \in T\otimes_{\Q_p}T \,.
\]
Then we have 
$e(x) = \pi_1(A_{Z,b}(x))\otimes \pi_2(A_{Z,b}(x))$ by construction.
Suppose that we have found some number~$x_1,\dots,x_N$ of $F$-rational points on~$X$. Using Lemma~\ref{Lemma :htformula}, we can compute the points~$e(x_i)$, as well as the local heights $h_{Z,\fp}(x_i)$ for all~$\fp\mid p$. So if we can compute the remaining local heights $h_{Z,v}(x_i)$ for~$v\nmid p$ (e.g.~if they are all zero, as will be the case in our example), then we can compute the global height $h_Z(x_i)$. Since~$h_Z(x_i)=h(e(x_i))$, each computed value of~$e(x_i)$ and~$h_Z(x_i)$ imposes one linear constraint on the height $h\colon T\otimes_{\Q_p}T\to\Q_p$, and so we can uniquely determine the height pairing~$h$ as soon as the elements~$e(x_i)$ span $T\otimes_{\Q_p}T$.

The disadvantage of this approach is that we may need a large number of known
rational points. 
In order to mitigate this difficulty, we use
the following consequence of the discussion in \cite[Section
3.4]{balakrishnandograQC2}:
\begin{lemma}\label{L:equivsplit}
  If the splitting $s_{\fp}$ of the Hodge filtration on
  $H^1_{\dR}(X_{F_{\fp}})$ commutes with
  $\End(J)$ for all $\fp \mid p$,
then we have $h(\alpha x,y) = h(x,\alpha y)$ for all $\alpha \in \End(J)$ and $x,y \in J(F)$.
\end{lemma}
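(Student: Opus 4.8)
The plan is to deduce the identity from the functoriality of Nekov\'a\v{r}'s local and global height pairings in the underlying Galois representation, the only delicate point being the behaviour at the primes $\fp\mid p$, which is governed precisely by the hypothesis on the splittings $s_\fp$.

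First I would record how an endomorphism acts on all the objects in sight. An $\alpha\in\End(J)$ is defined over $F$, so it induces a $G_F$-equivariant endomorphism $\alpha_*$ of $V=H^1_{\et}(X_{\bar F},\Q_p)^*$. This endomorphism is compatible with the Kummer (= \'etale Abel--Jacobi) map $J(F)\otimes\Q_p\hookrightarrow H^1_f(G_F,V)$; it preserves the local conditions defining $H^1_f$ at every finite place (it is $I_v$-equivariant for $v\nmid p$, and a morphism of crystalline representations at $\fp\mid p$ since $X$ has good reduction there); on de Rham realisations it induces an endomorphism of $V_{\dR}=D_{\cris}(V)=H^1_{\dR}(X_{F_\fp})^*$ respecting both the Hodge filtration and the Frobenius $\phi$ (it is a morphism of $F$-overconvergent isocrystals); and it acts on $V^*(1)$ through the transpose $\alpha^\vee$. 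Under the identifications $V^*(1)\cong V_p(J^\vee)$ coming from the Weil pairing and $J\cong J^\vee$ coming from the canonical principal polarisation of the Jacobian, the operator $\alpha^\vee_*$ on $H^1_f(G_F,V^*(1))\cong J^\vee(F)\otimes\Q_p$ corresponds to the Rosati involute $\alpha^\dagger$ acting on $J(F)\otimes\Q_p$.

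The core of the argument is the functoriality of Nekov\'a\v{r}'s pairing: for $\phi\in\End(V)$ preserving the crystalline condition at $p$ and commuting with the fixed splittings $s_\fp$, one has $h(\phi_*e_1,e_2)=h(e_1,\phi^\vee_*e_2)$ for $e_1\in H^1_f(G_F,V)$ and $e_2\in H^1_f(G_F,V^*(1))$. This is checked place by place, using that $h=\sum_v h_v$ is independent of the chosen mixed extension and that each $h_v$ depends on the mixed extension only through the biadditive data $(\pi_1,\pi_2)$. For $v\nmid p$ there is nothing to do beyond naturality: by \cite[Corollary~0.2]{kimtamagawa} and the discussion in \S\ref{Sec: Heights away from p}, $h_v$ factors through Kim's higher Albanese map, whose formation is functorial, so $h_v$ transforms by the dual operator automatically. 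For $\fp\mid p$ one reads off $h_\fp$ from the filtered $\phi$-module $M(x)=D_{\cris}(A_\fp(x))$ via Lemma~\ref{Lemma :htformula}: since $\alpha_*$ commutes with $\phi$, with the Hodge filtration, and --- by the hypothesis of the lemma --- with $s_\fp$, hence with the induced projections $s_1,s_2$, every term of the formula of Lemma~\ref{Lemma :htformula} and the pairing in \eqref{eq:mixedcpts} transform by the adjoint. Summing over $v$ yields $h(\alpha x,y)=h(x,\alpha^\dagger y)$ for all $x,y\in J(F)$; since the endomorphisms appearing in our application --- the polynomials in $T_p$ of Remark~\ref{R:Z}, and more generally the elements of $\End(J)$ in our situation --- are Rosati-symmetric, $\alpha^\dagger=\alpha$, which gives the claimed identity.

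The main obstacle is the analysis at $\fp\mid p$: one must verify that each ingredient of the explicit description of $h_\fp$ --- the two splittings $\lambda^\phi,\lambda^{\Fil}$ of $M(x)$ in \eqref{E:lambdas}, the projections $s_1,s_2$ induced by $s_\fp$, and the formula of Lemma~\ref{Lemma :htformula} --- is natural with respect to $\alpha_*$, so that ``$s_\fp$ commutes with $\End(J)$'' is exactly the condition needed to kill the sole obstruction to functoriality at $p$ (this is also why one does not wish to vary the $s_\fp$ in \S\ref{subsec:solving}). By contrast, the places $v\nmid p$ and the Weil-pairing/Rosati bookkeeping are routine once the functoriality statement is set up.
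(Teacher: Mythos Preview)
The paper does not give its own proof of this lemma: it simply introduces it as ``a consequence of the discussion in \cite[Section~3.4]{balakrishnandograQC2}'' and moves on. So there is no argument in the paper to compare against; your proposal is necessarily more detailed than what appears here.

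Your overall strategy --- functoriality of Nekov\'a\v{r}'s construction in the representation $V$, with the splitting hypothesis at $\fp\mid p$ being precisely the obstruction to that functoriality --- is the right one and is what underlies the cited discussion in \cite{balakrishnandograQC2}. Two points deserve correction.

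First, your treatment of $v\nmid p$ is confused. The higher Albanese map and \cite[Corollary~0.2]{kimtamagawa} concern the function $h_v\colon X(F_v)\to\Q_p$ obtained by evaluating the local height on the mixed extension $A_v(x)$ attached to a curve point; that factorisation is used in \S\ref{Sec: Heights away from p} only to show that this function has finite image. It says nothing about functoriality of the bilinear Nekov\'a\v{r} local pairing on $H^1_f(G_v,V)\times H^1_f(G_v,V^*(1))$ under endomorphisms of $V$. The correct argument is simpler: for $v\nmid p$, Nekov\'a\v{r}'s local splitting of the mixed-extension torsor is constructed from Galois cohomology and $\chi_v$ alone, with no auxiliary Hodge data, so naturality under a $G_v$-equivariant endomorphism of $V$ is automatic.

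Second, what your argument actually yields is $h(\alpha x,y)=h(x,\alpha^\dagger y)$ with $\alpha^\dagger$ the Rosati involute, since the identification $V\cong V^*(1)$ via the Weil pairing and the principal polarisation intertwines $\alpha$ with $\alpha^\dagger$. You resolve this by noting that the endomorphisms in the application are Rosati-symmetric; the cleaner statement, which covers the lemma as written for \emph{all} $\alpha\in\End(J)$, is that in the paper's setting $K=\End^0(J)$ is a totally real number field, and the Rosati involution --- being a positive involution on $K$ --- is then the identity, so $\alpha^\dagger=\alpha$ for every $\alpha$.
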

Given any splitting
$s_{\fp}$ of the Hodge filtration at $\fp$ and generators of $\End(J)$
acting on $H^1_{\dR}(X_{F_{\fp}})$, it is a matter of simple linear algebra
to construct a splitting as in the lemma. In particular, such splittings
exist.

Lemma~\ref{L:equivsplit} implies that $h$ factors through a linear
functional on $\mcE\coloneqq T\otimes_{K\otimes\Q_p}T$,
where~$K=\End^0(J)$. The space~$\mcE$ is typically of smaller dimension
than that of~$T\otimes_{\Q_p}T$, and so we require fewer $F$-rational points~$x_i$ to determine~$h$ in this case.
 
For example, if $K$ is a division algebra of dimension $D$ (as is true for the curve we
are interested in), then its left-action on the $dg$-dimensional
$\Q$-vector space $T_0\coloneqq H^0(X,\Omega^1)^*$ is free, and thus $T_0$
is a free left $K$-module of rank $dg/D$. Hence $\mcE_0\coloneqq
T_0\otimes_K T_0$ is a free $K$-module of dimension $(dg/D)^2$, and
$\mcE\coloneqq\mcE_0\otimes \Q_p$ is a free $K\otimes \Q_p$-module of rank
$(dg/D)^2$. Hence, the space $\mcE^*$ of functionals we are interested in
has dimension $(dg/D)^2$ over $K\otimes \Q_p$ and dimension $(dg)^2/D$ over
$\Q_p$, in contrast to $\dim_{\Q_p}T\otimes_{\Q_p}T = (dg)^2$.
\begin{Remark}\label{R:moddim}

In the case of modular curves, including our example in Section~\ref{sec3}, we often have $D=g$. Thus the space $\mcE^*$ of functionals we are interested in has dimension $d^2g$.
\end{Remark}

\begin{Remark}\label{Remark : More correspondences}
One may reduce the number of required $F$-rational points further
  by noting that the mixed extensions $A(x)$ and 
$A_\fp(x)$ depend on the choice of cycle $Z$. Therefore the same point $x\in X(F)$ 
  gives rise to $r_{\NS}-1$ elements $e(x) \in \mcE$. 
\end{Remark}

\begin{Remark}\label{R:CG}
  An alternative basis of $\mcE^*$ is given by suitable products of
  abelian logarithms~\eqref{log2}. In particular, we may attempt to use this when we
  do not have enough rational points $x_i$ such that the $e(x_i)$ form a
  basis of $\mcE$. However, to solve for the height pairing, we then have to evaluate the global height pairing between 
  points on the Jacobian that are not of the form $([x-b],E_Z([x-b])+c_Z)$ for $x\in X(F)$.
  At present, no algorithm is known to compute such heights using 
  Nekov\'a\v{r}'s construction. Besser has shown~\cite{besserCGNek} that
  Nekov\'a\v{r}'s construction is equivalent to a construction due to
  Coleman and Gross~\cite{colemangross}. For hyperelliptic curves, there are
  efficient algorithms to compute Coleman--Gross heights between divisors,
  see~\cite{balakrishnanbesserhyperelliptic}
  and~\cite{GM24}. This can then be used to solve for the
  $\alpha_j$.
  See~\cite[\S3.3]{balakrishnan_dogra_müller_tuitman_vonk_2023} for
  details. 
\end{Remark}

\begin{Remark}
  We cannot get additional relations by varying the splittings $s_{\fp}$,
  see~\cite[Remark~4.9]{balakrishnandograQC1}.
\end{Remark}

\subsection{Root finding}\label{subsec:Hensel} Given a system of multivariate $p$-adic power series in $d$ variables, our main tool to find solutions is to apply a multivariate Hensel's lemma, as was done in \cite[Theorem A.1]{BBBMnumberfields} (see also \cite[Theorem 4.1]{conrad:multihensel}). 

We follow the strategy outlined in \cite[Algorithm 1]{BBBMnumberfields}: first we compute an appropriate truncation of each multivariate $p$-adic power series, to reduce to the problem of root-finding of multivariate $p$-adic polynomials. Then we naively compute all solutions to the system mod $p$. For each mod $p$ solution, we attempt to apply multivariate Hensel to lift it to a solution in $\Z/p^N\Z$, where $N$ denotes the desired $p$-adic precision.  If the hypothesis of the multivariate Hensel's lemma is not satisfied, i.e., that roots are present  with multiplicity, we attempt to lift naively to sufficiently many digits (producing a solution in $\Z/p^{N'}\Z$ where $N' \geq 3$)  to separate each root with multiplicity, and, if successful and $N' < N$, re-try an application of multivariate Hensel.  

Note that in some cases, multiple roots can persist through precision $\Z/p^N\Z$. For instance, this may happen in the presence of nontrivial automorphisms of the curve. In this case, one should use the automorphisms to strategically choose local coordinates and re-parametrise the system, as in \cite[Lemma A.3]{BBBMnumberfields}.

\subsection{Precision analysis}\label{subsec:precision}

We now discuss the loss of precision in our method. For simplicity, we
assume that $\Upsilon=\{0\}$. We also assume that we have found points
$x_1,\ldots,x_N\in Y(F)$ such that all $\sigma(x_i)$ lie in good residue polydiscs and such that
$e(x_1),\ldots,e(x_N)$ form a basis of $\mcE=T\otimes_{K\otimes\Q_p}T$. Hence we get a dual basis 
$\Psi_1,\ldots,\Psi_N$ of $\mcE^*$, so we may write 
\begin{equation}\label{E:dj}
h=\sum_j d_j\Psi_j\,;\quad d_j\in \Q_p\,.
\end{equation}
Since $p$ splits in $O_F$, all local computations take place over $\Q_p$, and hence the analysis is almost the same
as in~\cite[Section~4]{balakrishnan_dogra_müller_tuitman_vonk_2023}, which
deals with the case $F=\Q$. However, we found some minor inaccuracies in loc. cit., which we fix below.

\subsubsection{Precision loss in height computations}\label{subsec:prechts}
The analysis of the possible loss of $p$-adic precision when computing
$h_{\fp}(x)$ for a fixed point $x \in Y(F_{\fp})$ in a good disc is exactly
the same as
in~\cite[\S4.1, \S4.2]{balakrishnan_dogra_müller_tuitman_vonk_2023}.
Consequently, this allows us to bound the precision loss in computing
$h(x)$ for $x\in Y(F)$ such that $\sigma(x)$ is in a good residue polydisc for
all $\fp\mid p$. Since $e(x)$ can be obtained using the same data as
$h_{\fp}(\sigma_\fp(x))$ by Lemma~\ref{Lemma :htformula}, the same analysis
allows us to bound the loss of precision in computing the coefficients
$d_j$ in~\eqref{E:dj}.

\subsubsection{Bounding valuations of power series coefficients}\label{subsec:boundvals}
In order to solve for its zeroes, we need to expand the function $\rho$
into
multivariate power series in good residue polydiscs $\mathcal{D}\coloneqq
\resdisc{x_1} \times \cdots \times \resdisc{x_d}$:
\begin{equation}\label{}
  \rho = \rho(\vec{t}) = h(\vec{t}) - \sum_{\fp\mid p} h_{\fp}(\vec{t})\in
  \Q_p[[\vec{t}]]\,,
\end{equation}
where $\vec{t} = (t_1,\ldots,t_d)$ and $t_j$ is a local parameter in 
$\resdisc{x_j}$.
Given a $\vec{t}$-adic truncation level $M>0$ and a starting precision $N_1>0$ for our
$p$-adic computations,~\S\ref{subsec:prechts} allows us to provably approximate
the coefficients of these power series
to some $p$-adic precision $N_2<N_1$. As we
will discuss in~\S\ref{subsec:prechensel}, in order to approximate the common
zeroes of $d$ quadratic Chabauty functions $\rho$, we need a
lower bound on the valuations of the coefficients. We now explain how this
can be obtained. Again, we 
follow~\cite[\S4.4]{balakrishnan_dogra_müller_tuitman_vonk_2023}, but in
doing so we will also correct a minor inaccuracy in loc. cit.

For a univariate power series $S$ over $\Q_p$ and $i\in \Z$, we let $\tau_i(S)$ be the
valuation of the $i$th coefficient of $S$. We extend this to
$\tau_{\vec{i}}(S)$ for $S\in \Q_p[[\vec{t}]]$, and we set, in this case
$\tau_i(S) \coloneqq \min_{\sum\vec{i}=i}\tau_{\vec{i}}(S)$. Finally, we
write $\tau_i(S_1,\ldots,S_d)\coloneqq \min\{\tau_i(S_1),\ldots,\tau_i(S_d)\}$ for $S_1,\ldots,S_d\in \Q_p[[\vec{t}]]^d$.

Recall the formula for the local height $h_{\fp}(x_j)$ 
in Lemma~\ref{Lemma :htformula} and, in particular, Algorithm~\ref{Algorithm: Frobenius
structure}. As in~\cite[Section~4]{balakrishnan_dogra_müller_tuitman_vonk_2023}, define 
\[
c_1\colonequals \ord_p(\lambda^\phi(x_j'))\,,
\]
where $x'_j$ is the fixed point of Frobenius
in the disc of $x_j$ and we take $t_j$ such that $t_j(x'_j)=0$. 
 Define 
\[
c_2 \colonequals \min\{0,v_{\spl}, \ord_p(\beta_{\Fil}),v_{\spl}
+\ord_p(\beta_{\fil}) \}\,,
\]
where
$v_{\spl}$ is the minimum of the valuations of the coefficients expressing our
chosen equivariant splitting in terms of 
$\vec{\omega}$ and
$\ord_p(\gamma_{\Fil})$ is the minimum of the valuations of the
coefficients of $\gamma_{\Fil}$.

Our goal is to find lower bounds for $\tau_i(\rho)$. These simplify if we
only consider sufficiently large $i$; more precisely, we assume that $i\ge
i_0$, where $i_0$ is defined
in~\cite[\S4.4]{balakrishnan_dogra_müller_tuitman_vonk_2023}.
This is permissible, since in practice $i_0$ is usually very small.
Under this assumption, the discussion in~\cite[\S4.4]{balakrishnan_dogra_müller_tuitman_vonk_2023} shows that  
\begin{equation}\label{tauihfp}
  \tau_i(h_\fp)\ge -2\lfloor\log_pi\rfloor + c_1+c_2,
\end{equation}
where the term $-2\lfloor\log_pi\rfloor$ comes from the double integral in the parallel transport matrix~\eqref{E:PT}.
\begin{Remark}\label{R:c1}
For $F=\Q$,~\cite[Lemma~4.5]{balakrishnan_dogra_müller_tuitman_vonk_2023}
claims that $\tau_i(h_p) \ge -2\lfloor\log_pi\rfloor + c_2$, but 
  it is easy to see that the $c_1$ term in~\eqref{tauihfp} is in fact necessary due
to~\cite[(4.18)]{balakrishnan_dogra_müller_tuitman_vonk_2023}. 
\end{Remark}

For general $F$, the constant $c_2$ actually depends on the prime $\fp$, so we define $c_2$ to be the
minimum of the $c_2$ values for all $\fp\mid p$.
Similarly, we take $c_1$ to be the minimum of the $c_1$ values in the residue
discs $\resdisc{x_j}$.
Therefore we obtain the lower bound
\begin{equation}\label{E:tauihp}
  \tau_i\left(\sum_{\fp\mid p}h_\fp\right)\ge -2\lfloor\log_pi\rfloor + c_1+c_2\,.
\end{equation}

By~\eqref{E:dj}, bounding $\tau_i(h)$ reduces to bounding the valuations of the coefficients of the expansions of the entries of $e(x)$ for $x\in \mathcal{D}$. As stated in~\cite[\S4.4]{balakrishnan_dogra_müller_tuitman_vonk_2023}, this is very similar to bounding $\tau_i(h_\fp)$, since $e$ and $h_\fp$ are determined by the same matrices $\lambda^{\phi}$ as in~\eqref{E:lambdas} by Lemma~\ref{Lemma :htformula}. The main difference is that for the entries of $e$, we need to consider the product of two matrices of this kind, rather than only one, as is the case for $h_\fp$. Nevertheless, the dependency of $\tau_i(h)$ on $i$ is the same as that of $\tau_i(h_{\fp})$, because only single integrals appear in the formula for $e$ in Lemma~\ref{Lemma :htformula}. More precisely, for $i\ge i_0$, we obtain
\begin{equation}\label{E:tauih}
  \tau_i(h)\ge -2\lfloor\log_pi\rfloor + 2c_1+c_3\,,
\end{equation}
where $c_3 = \min_j\{v_p(d_j)\}$ for the $d_j$ defined in~\eqref{E:dj}.
We can now bound $\tau_i(\rho)$ as follows:
\begin{prop}\label{P:tauirho}
For $i\ge i_0$, the expansion of the function $\rho$ in a good residue polydisc satisfies
\[\tau_i(\rho) \ge -2\lfloor\log_pi\rfloor + c_1+\min\{c_2,c_1+c_3\}\,.\]
\end{prop}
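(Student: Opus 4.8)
The plan is simply to combine the two lower bounds already in hand — inequality~\eqref{E:tauihp} for $\tau_i\bigl(\sum_{\fp\mid p}h_\fp\bigr)$ and inequality~\eqref{E:tauih} for $\tau_i(h)$ — using the non-archimedean nature of the $p$-adic valuation. Recall that on a good residue polydisc $\mathcal{D} = \resdisc{x_1}\times\cdots\times\resdisc{x_d}$ we have $\rho(\vec t) = h(\vec t) - \sum_{\fp\mid p}h_\fp(\vec t)$ in $\Q_p[[\vec t]]$.

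First I would record the elementary estimate that for any $S_1, S_2\in\Q_p[[\vec t]]$ and any multi-index $\vec i$, the coefficient of $\vec t^{\vec i}$ in $S_1-S_2$ has valuation at least $\min\{\tau_{\vec i}(S_1),\tau_{\vec i}(S_2)\}$; minimising over all $\vec i$ with $\sum\vec i = i$ gives $\tau_i(S_1 - S_2)\ge\min\{\tau_i(S_1),\tau_i(S_2)\}$. Applied to $S_1 = h$ and $S_2 = \sum_{\fp\mid p}h_\fp$, this yields $\tau_i(\rho)\ge\min\{\tau_i(h),\tau_i(\sum_{\fp\mid p}h_\fp)\}$ for every $i$, in particular for $i\ge i_0$. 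Substituting \eqref{E:tauih} and \eqref{E:tauihp} gives
\[
\tau_i(\rho)\ \ge\ \min\bigl\{-2\lfloor\log_pi\rfloor + 2c_1 + c_3,\ -2\lfloor\log_pi\rfloor + c_1 + c_2\bigr\},
\]
and since both terms share the summand $-2\lfloor\log_pi\rfloor + c_1$, pulling it out of the minimum produces
\[
\tau_i(\rho)\ \ge\ -2\lfloor\log_pi\rfloor + c_1 + \min\{c_1 + c_3,\ c_2\},
\]
which is the asserted inequality.

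The step above is purely formal; all the real work sits upstream in \eqref{E:tauihp} and \eqref{E:tauih}, which are granted. The points that genuinely deserve care — and the only places an error could creep in — are: (i) that the threshold $i_0$ can be taken common to both \eqref{E:tauih} and \eqref{E:tauihp} (replace it by the larger of the two if they differ), so that the minimum above is legitimate for all $i\ge i_0$; (ii) that the dependence on $i$ in both bounds is only through the single term $-2\lfloor\log_pi\rfloor$, reflecting the fact that no integral beyond the double integral appearing in the parallel-transport matrix~\eqref{E:PT} enters the expansions of $h_\fp$ or of $e$; and (iii) the bookkeeping of the $c_1$ contributions flagged in Remark~\ref{R:c1} — namely that each $h_\fp$ really does carry a $c_1$ term coming from the normalisation of the Frobenius-equivariant splitting at the Frobenius-fixed point of its disc, while $h$ carries $2c_1$ because, by Lemma~\ref{Lemma :htformula}, $e(x)$ is built from a product of two matrices of the shape~\eqref{E:lambdas} rather than one. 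I do not expect any serious obstacle beyond this bookkeeping.
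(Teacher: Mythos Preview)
Your proposal is correct and matches the paper's approach exactly: the paper's proof is the one-liner ``This follows from~\eqref{E:tauihp} and from~\eqref{E:tauih},'' and you have simply spelled out the implicit non-archimedean step $\tau_i(\rho)\ge\min\{\tau_i(h),\tau_i(\sum_{\fp\mid p}h_\fp)\}$ and factored the common summand. Your additional remarks on the common threshold $i_0$ and the $c_1$ bookkeeping are accurate and more explicit than the paper itself.
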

\begin{proof}
This follows from~\eqref{E:tauihp} and from~\eqref{E:tauih}.
\end{proof}

\begin{Remark}
Proposition~\ref{P:tauirho} corrects~\cite[Proposition~4.6]{balakrishnan_dogra_müller_tuitman_vonk_2023} 
which claims that for $F=\Q$, we have
$\tau_i(\rho) \ge -2\lfloor\log_pi\rfloor + c_1+\min\{c_2,c_3\}$; see
  Remark~\ref{R:c1}. This does not impact the validity of any quadratic
  Chabauty examples in the literature computed using the code~\cite{QCMod}.
\end{Remark}
\subsubsection{Precision of root finding}\label{subsec:prechensel}
Suppose we have $d$ power series $\rho_1,\ldots,\rho_d\in \Q_p[[\vec{t}]]$, and
we are looking for their common roots in $(p\Z_p)^d$.

Let $N>0$ and $M>0$ be integers such that for all $i\ge M$
and for all $j\in \{1,\ldots,d\}$ we have
\begin{equation}\label{bd1}
  di+\tau_i(\rho_j)\ge N\,.
\end{equation}
We may scale so that all $\rho_j\in \Z_p[[\vec{t}]]\setminus p\Z_p[[\vec{t}]]$.
Then a root $\vec{z} \in \Z_p^d$ of the system 
\begin{equation}\label{pssystem}
  \rho_1(p\vec{t})=\rho_2(p\vec{t})=\cdots=\rho_d(p\vec{t})=0
\end{equation}
yields a root $\vec{z}\mod p^n \in (\Z/p^n\Z)^d$ of 
\begin{equation}\label{polysystem}
  \tilde{\rho}_1(p\vec{t})\equiv
  \tilde{\rho}_2(p\vec{t})\equiv\cdots\equiv \tilde{\rho}_d(p\vec{t})\equiv
  0\pmod{p^n}\,,
\end{equation}
where $\tilde{\rho}_j \coloneqq \rho_j+O(\vec{t})^M \in
\Q_p[\vec{t}]^d$.

As discussed in~\cite[Appendix~A]{BBBMnumberfields}, if the condition for applying the multivariate version of
Hensel's lemma (\cite[Theorem A.1]{BBBMnumberfields}) is satisfied, then  
a root of~\eqref{polysystem} lifts to a unique root of~\eqref{pssystem}. So
we can determine such roots of~\eqref{pssystem} to precision
$p^N$ using the approach sketched in~\S\ref{subsec:Hensel}, as long as we approximate our $p$-adic power series to precision 
$\vec{t}^M$ for $M$ satisfying~\eqref{bd1} for all $i\ge M$ and all $j$.
In
practice, we start our quadratic Chabauty computations with some $\vec{t}$-adic precision
$\vec{t}^M$ and a $p$-adic precision $p^{N_1}$. As discussed
in~\S\ref{subsec:prechts}, we obtain $N_2\le N_1$ such that 
the coefficients of $\rho_j+O(\vec{t})^M$ are computed correctly to
$N_2$ digits of $p$-adic precision. To guarantee that the roots
of~\eqref{pssystem} satisfying the Hensel condition are computed correctly
modulo $p^{N_2}$, it suffices to check that~\eqref{bd1} holds for all $j$
and all $i\ge M$, which is straightforward from Proposition~\ref{P:tauirho}. 

We do not discuss the case when the Hensel condition is not satisfied,
since this is not required for our example in Section~\ref{sec3}.

\subsection{Implementation}\label{subsec:}
The algorithms discussed above have been implemented in the computer
algebra system {\tt Magma}~\cite{MR1484478}, and the code is available
at~\cite{QCMod-NF}. It is based on the implementation of the algorithms
in~\cite{balakrishnan_dogra_müller_tuitman_vonk_2023} available at~\cite{QCMod}.
Our code incorporates the precision analysis in~\eqref{subsec:precision},
so that the result is provably correct to a certain explicit $p$-adic
precision. The root-finding is done using code that we ported from Francesca Bianchi's {\tt SageMath} implementation of~\cite[Algorithm 1]{BBBMnumberfields}, available at \cite{Bianchi_Roots}.

At present, our code is restricted to imaginary quadratic $F$.
However, an extension to real quadratic $F$ would be trivial and an
extension to other number fields would be fairly straightforward. One issue
is the choice of suitable id\`ele class characters, see also~\cite[\S2]{BBBMnumberfields}.

Our implementation assumes that the local heights $h_v$ are trivial for
$v\nmid p$, since this holds for our example $X'_H$, as shown
in~\S\ref{Sec: Hts for X away}. This means that $\Upsilon=\{0\}$.
However, it would be easy to allow for 
nontrivial $\Upsilon$, as long as we can compute it explicitly.
An algorithm that computes $\Upsilon$ for hyperelliptic curves in odd residue characteristic is discussed in~\cite{bettslocalheights}.  One can also sometimes compute $\Upsilon$ with much less work, see~\cite[Examples 5.18,
5.19]{balakrishnan_dogra_müller_tuitman_vonk_2023}.

\subsection{Assumptions, extensions, and relation to other
work}\label{subsec:extensions}

In this subsection we discuss Assumption \ref{A:log}, the scope of
quadratic Chabauty over number fields without this assumption, and how our work relates to the literature.

 Quadratic Chabauty 
 is a special case of Kim's nonabelian Chabauty program \cite{kim2005motivic, kim2009unipotent}, which generalises Chabauty's method using
 tools from $p$-adic Hodge theory and nonabelian Galois cohomology. 
 The work of Balakrishnan and Dogra~\cite{balakrishnandograQC1}
 gives an explicit realisation of Kim's approach using $p$-adic heights when
 $F=\Q$ and Assumption~\ref{A:log} is satisfied, or when $F$ is an
 imaginary quadratic field and a similar condition holds.
 While Kim and Balakrishnan--Dogra only use one prime
 $\fp\mid p$, Dogra~\cite{dogra2023unlikely} and Hast~\cite{Has21} employ a restriction of
 scalars approach that uses all primes above $p$, much like Siksek's
 extension of Chabauty's method discussed in~\S\ref{subsec:Chabauty}.
 We refer to~\cite[\S3.1]{dogra2023unlikely} for details.

 As is the case for linear Chabauty, proving finiteness of Chabauty--Kim sets over number fields can be tricky; see the discussion in~\cite{dogra2023unlikely, Has21}.
This is less relevant for explicit methods;
if we are able to compute the common zero locus, and it is finite, then this suffices for us.

We now discuss possible extensions when Assumption~\ref{A:log} is not
satisfied, as well as their attendant difficulties.
Recall that we need $d$ locally analytic functions to hope to get a finite subset as in \eqref{Eqn: QC intersections}. 
The number of independent id\`ele class characters~$\chi$ 
is at least $r_2+1$, where $(r_1,r_2)$ is the signature of $F$,
and is exactly $r_2+1$ if Leopoldt's conjecture \cite{Leopoldt1962} holds for $F$
(see~\cite[Corollary~2.4]{BBBMnumberfields}). The number of
independent trace~$0$ cycles~$Z$ is $r_{\NS}-1$. Hence we can construct
  $m\geq(r_{\NS}-1)(r_2+1)$ independent height functions $h^{(j)} =
  h^{\chi_j}_{Z_j}$ on~$X(F)$, whose local components at places~$\fp\mid p$
  are Coleman functions on~$X(F\otimes\Q_p)$, and $x\mapsto h^{(j)}(x)-\sum_{\fp\mid
  p}h^{(j)}_{\fp}(x)$ takes values in a finite set~$\Upsilon^{(j)} =
  \Upsilon_{Z_j}^{\chi^{(j)}}$ when evaluated on~$X(F)$.

The approach of quadratic Chabauty in this more general setting is to look
  for polynomial combinations of these local height functions and linear
  Chabauty functions (Coleman integrals of~$1$-forms) that vanish
  on~$X(F)$. This can be done as follows. Each global height
  function~$h^{(j)}$ defines a quadratic function $J(F)\otimes\Q_p\to\Q_p$
  by construction ($h^{(j)}(x)=h^{\chi^{(j)}}([x-b],E_{Z_j}([x-b])+c_{Z_j})$), while each linear Chabauty function~$\int_b\omega$ defines a linear function on~$J(F)\otimes\Q_p$. If~$dg+(r_{\NS}-1)(r_2+1)>r$, then there must be an algebraic dependence between these functions on~$J(F)\otimes\Q_p$, say
\[
  P\left(\int_b\omega_1,\dots,\int_b\omega_{dg},h^{(1)},\dots,h^{(m)}\right) = 0,
\]
with~$P$ a polynomial in~$dg+m$ variables. This implies that the Coleman function
\[
  \prod_{y_1\in\Upsilon^{(1)}}\dots\prod_{y_m\in\Upsilon^{(m)}}P\left(\int_b\omega_1,\dots,\int_b\omega_{dg},y_1+\sum_{\fp\mid
  p}h_{\fp}^{(1)},\dots,y_m+\sum_{\fp\mid
  p}h_{\fp}^{(m)}\right)
\]
vanishes when evaluated on~$X(F)$.

In this way, we obtain a Coleman function~$X(F\otimes\Q_p)\to\Q_p$ that
  vanishes on~$X(F)$ for each algebraic relation between the functions
  $\int_b\omega_i$ and $h^{(j)}$ on~$J(F)\otimes\Q_p$. Since we need at least $d$ such functions to cut out a finite locus, we might try to apply this to compute~$X(F)$ when
\begin{equation}\label{Eqn : QC inequality}
    r\leq d(g-1)+(r_{\NS}-1)(r_2+1)\,.
\end{equation}

We further remark that in the case that the logarithm map~\eqref{log2} is
  \emph{injective}, then it is not too hard to write down the algebraic
  relations~$P$, and they are again either linear or quadratic relations
  among the $\int_b\omega_i$ and~$\sum_{\fp\mid p}h^{(j)}_\fp$. One such example was worked out concretely in \cite[Example~7.2]{BBBMnumberfields}. If~\eqref{log2} fails to be injective, then it is much harder to write down the algebraic relations~$P$. One approach would be to construct them via the theory of resultants, as was done over $F=\Q$ in \cite[Proposition~5.9]{balakrishnandograQC1}. Even over~$F=\Q$, additional work would be needed to turn this into an explicit method for computing $X(\Q)$: 
\cite[Proposition 5.9]{balakrishnandograQC1} has not been made explicit.

Another generalisation of~\cite{balakrishnandograQC1} is discussed by Balakrishnan and Dogra in \cite{balakrishnandograQC2}, using their theory of
\emph{generalised heights} on Selmer varieties. It would be interesting to
use these in our setting as well.

Finally, we mention two alternative approaches to quadratic Chabauty that do not use
Nekov\'a\v{r} heights. The geometric quadratic Chabauty method due to
Edixhoven and Lido~\cite{EL21} uses Poincar\'e torsors and has been
extended to the number field setting in~\cite{CLXY23} where the same
bound~\eqref{Eqn : QC inequality} is assumed. An algorithmic version
has been developed over $\Q$ in~\cite{DRHS23}, but, to our knowledge, not over
more general number fields. 
Another approach using Besser's $p$-adic Arakelov theory to construct $p$-adic heights has been developed by Besser,
M\"uller, and Srinivasan~\cite{BMSadelic}. The approach there is discussed in
detail over $\Q$, but readily extends to number fields. Again, the
bound~\eqref{Eqn : QC inequality} is a necessary condition, but at present
no algorithmic version is available.
We chose to extend the approach via Nekov\'a\v{r} heights rather than these
alternative approaches, since it seemed most straightforward to extend the
existing implementation over $\Q$.

\section{On 3-adic Galois images}\label{sec3}
We now describe how to apply the quadratic Chabauty algorithms from the previous section to a particular smooth plane quartic defined over $\Q(\zeta_3)$. Our {\tt Magma} implementation of these algorithms is available at \cite{QCMod-NF}.

\subsection{From \texorpdfstring{$X_{\ns}^+(27)$}{X\_ns+(27)} to a smooth plane quartic curve}

The classification of $\ell$-adic images for elliptic curves over~$\Q$ is complete for $\ell=2,13,17$ due to the work of several authors, including \cite{dzbrouse2adic, BDMTV_split, rszbadic, balakrishnan_dogra_müller_tuitman_vonk_2023}.

For $\ell=3$, the classification of Rouse--Sutherland--Zureick-Brown
\cite{rszbadic} is almost complete; the only case left is the occurrence of
the \emph{normaliser of the non-split Cartan subgroup}~\footnote{All such groups are conjugate, hence the associated modular
curves are isomorphic.} of level
$27$, which is the subgroup of~$\GL_2(\Z_3)$ that is the preimage of the subgroup

\[N_s(27)\coloneqq \Big\langle\begin{pmatrix}
    20 & 14\\
    7 & 20
\end{pmatrix},
\begin{pmatrix}
    2 & 9\\
    9 & 25
\end{pmatrix}\Big\rangle \leq \GL_2(\Z/27\Z)\,.\]

The corresponding modular curve~$X_{\ns}^+(27)$ has genus~$12$, and computationally, it seems infeasible to carry out quadratic Chabauty on this curve. However, in \cite[\S9.1]{rszbadic}, the authors construct a genus~$3$ quotient of~$X_{\ns}^+(27)$ defined over~$F=\Q(\zeta_3)$, which is more amenable to computation. We give two descriptions of this quotient curve. First, consider the subgroup
\[H\coloneqq \Big\langle\begin{pmatrix}
    0 & 26\\
    4 & 6
\end{pmatrix},
\begin{pmatrix}
    10 & 1\\
    25 & 26
\end{pmatrix}\Big\rangle \leq \GL_2(\Z/27\Z)\,.\]
Let~$X_H$ denote the corresponding modular curve, using the conventions in \cite{rszbadic}. The determinant~$\det(H)\subset(\Z/27\Z)^\times$ is the subgroup of elements congruent to~$1$ modulo~$3$, and so the $\Q$-scheme~$X_H$ has the structure of a geometrically connected curve over the field~$F=\Q(\zeta_3)$ (see Appendix~\ref{appx:definitions} for a precise discussion of how to view~$X_H$ as a curve over~$F$, or see \cite[Lemma~A.2]{rszbadic}).

\begin{lemma}
    $X_H$ is a quotient of the curve~$X_{\ns}^+(27)_F$.
\end{lemma}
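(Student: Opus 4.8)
The plan is to deduce the lemma from the general functoriality of the modular-curve construction $G \mapsto X_G$ for open subgroups $G \leq \GL_2(\widehat{\Z})$, together with a finite computation in $\SL_2(\Z/27\Z)$. We work with the conventions of \cite{rszbadic} and Appendix~\ref{appx:definitions}. Recall the relevant input: $X_{\ns}^+(27)$ is attached to the preimage in $\GL_2(\widehat{\Z})$ of $N_s(27)$, and it is geometrically irreducible over $\Q$ because $\det N_s(27) = (\Z/27\Z)^\times$; on the other hand $X_H$ is attached to the preimage of $H$, and since $\det H$ is the index-$2$ subgroup of $(\Z/27\Z)^\times$ (the elements $\equiv 1 \pmod 3$), which is exactly the image of $\Gal(\overline{\Q}/F)$ under the cyclotomic character for $F = \Q(\zeta_3)$, the $\Q$-scheme $X_H$ has two geometric components, conjugate over $F$, and ``$X_H$ as a curve over $F$'' means one such component $Y$ (this is \cite[Lemma~A.2]{rszbadic} and Appendix~\ref{appx:definitions}). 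In particular there can be no $\Q$-morphism $X_{\ns}^+(27) \to X_H$, since the source is geometrically connected and its image would be forced into a single geometric component of $X_H$, contradicting Galois-equivariance; so the morphism we are after exists only over $F$, and the first thing I would do is set this up precisely.

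The second step is to reduce to group theory. Conjugating $N_s(27)$ by an element of $\GL_2(\Z/27\Z)$ yields an isomorphic modular curve, and over $F$ the component $Y$ depends only on $H$ up to the scalars coming from $\Gal(\overline{\Q}/F)$ (Appendix~\ref{appx:definitions}). Writing $\overline{\Gamma}_{\mathrm{ns}}$ and $\overline{\Gamma}_H$ for the images in $\PSL_2(\Z/27\Z)$ of $N_s(27) \cap \SL_2(\Z/27\Z)$ and $H \cap \SL_2(\Z/27\Z)$ respectively, I would check by direct computation (e.g.\ in {\tt Magma}) that there is $g \in \GL_2(\Z/27\Z)$ with $g\,\overline{\Gamma}_{\mathrm{ns}}\,g^{-1}$ a normal subgroup of $\overline{\Gamma}_H$ with cyclic quotient of order $3$. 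Functoriality of $G \mapsto X_G$ then produces a degree-$3$ morphism $X_{\ns}^+(27)_{\overline{\Q}} \to Y_{\overline{\Q}}$ realising $Y_{\overline{\Q}}$ as the quotient of $X_{\ns}^+(27)_{\overline{\Q}}$ by the induced faithful $\Z/3\Z$-action. The degree is forced: $[\PSL_2(\Z/27\Z) : \overline{\Gamma}_{\mathrm{ns}}] = 243$ is the degree of $X_{\ns}^+(27) \to X(1)$, $[\PSL_2(\Z/27\Z) : \overline{\Gamma}_H] = 81$ is the degree of $X_H \to X(1)$ over $F$, and $243 = 3 \cdot 81$, matching the degree-$3$ map of \cite[\S9.1]{rszbadic}.

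Finally I would descend the morphism to $F$ by Galois descent: $X_{\ns}^+(27)$ is defined over $\Q$, $Y$ is defined over $F$, and the geometric morphism is $\Gal(\overline{\Q}/F)$-equivariant because $\Gal(\overline{\Q}/F)$ acts on level structures through the cyclotomic character, whose image in $(\Z/27\Z)^\times$ is $\det H$, and this subgroup normalises both $\overline{\Gamma}_H$ and the conjugate $g\,\overline{\Gamma}_{\mathrm{ns}}\,g^{-1}$; hence the $\Z/3\Z$-action and the quotient map descend, giving $X_H \cong X_{\ns}^+(27)_F / (\Z/3\Z)$ over $F$. The main obstacle is not any single hard computation but the interface between the two levels of bookkeeping: the $\SL_2$-part, which is routine and is essentially already contained in \cite[\S9.1]{rszbadic}, must be combined carefully with the determinant-part and field-of-definition analysis of Appendix~\ref{appx:definitions} in order to see that the morphism is exactly $F$-rational (neither more nor less) and to identify the target precisely with the curve $X_H$ of the lemma.
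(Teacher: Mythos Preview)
Your approach is workable in principle but takes a longer route than the paper's, and the Galois-descent step remains imprecise. The paper avoids separating the $\SL_2$-part from the determinant-part altogether: it introduces the intermediate subgroup $N_s(27)\cap D$ where $D=\{g\in\GL_2(\Z_3):\det g\equiv1\pmod3\}$, observes (citing \cite[\S9.1]{rszbadic}) that $N_s(27)\cap D\subset H$ already holds with no conjugation, and then applies functoriality of $G\mapsto X_G$ directly at the level of $\GL_2$ to get a morphism $X_{N_s(27)\cap D}\to X_H$ of $F$-curves. The crux is the identification $X_{N_s(27)\cap D}\cong X_{\ns}^+(27)_F$: since $[N_s(27):N_s(27)\cap D]=2$, the projection $X_{N_s(27)\cap D}\to X_{\ns}^+(27)$ is finite \'etale of degree~$2$ over~$\Q$, and combining it with the structure map to $\Spec(F)$ yields a degree-$1$ finite \'etale map to $X_{\ns}^+(27)\times_{\Q}\Spec(F)$, i.e.\ an isomorphism. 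This makes the $F$-rationality automatic and sidesteps any descent argument.

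By contrast, your plan proves more than is asked (normality of the subgroup and a $\Z/3\Z$-action are not needed --- the lemma only asserts the existence of a finite dominant morphism), and the descent sketch (``$\det H$ normalises both groups'') does not by itself pin down why the geometric morphism you construct is $\Gal(\overline{\Q}/F)$-equivariant for the specific $F$-structures on source and target. The subtlety you correctly flag at the end --- matching the determinant bookkeeping with the $\SL_2$ computation --- is exactly what the paper's device of passing to $N_s(27)\cap D$ dissolves. Also note that no conjugation is needed: the containment $N_s(27)\cap\SL_2(\Z/27\Z)\subset H\cap\SL_2(\Z/27\Z)$ holds on the nose, so your $g$ may be taken to be the identity.
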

\begin{proof}
    Let
    \[D = \{g \in \GL_2(\Z_3):\det(g) \equiv 1 \pmod{3} \}.\]
    We have
    \[
    N_s(27)\cap D \subset H
    \]
    as subgroups of~$\GL_2(\Z/27\Z)$ (see \cite[\S9.1]{rszbadic}), and so there are morphisms of modular curves
    \[
    X_{\ns}^+(27) \coloneqq X_{N_s(27)} \leftarrow X_{N_s(27)\cap D} \to X_H.
    \]
    The determinant of~$N_s(27)\cap D$ is the group of elements of~$\Z/27\Z$ congruent to~$1$ modulo~$3$, and so~$X_{N_s(27)\cap D}$ also has the structure of a curve over~$F$ and the map $X_{N_s(27)\cap D} \to X_H$ is a morphism of $F$-curves.

    Because~$N_s(27)\cap D$ has index~$2$ in~$N_s(27)$, the other projection $X_{N_s(27)\cap D}\to X_{\ns}^+(27)$ is a morphism of $\Q$-schemes which is a finite \'etale covering of degree~$2$. Together with the structure map to~$\Spec(F)$, this projection induces (by the universal property of products) a map $X_{N_s(27)\cap D}\to X_{\ns}^+(27)_F=X_{\ns}^+(27)\times\Spec(F)$ which is a finite \'etale morphism of degree~$1$, i.e.~an isomorphism. So the morphism $X_{N_s(27)\cap D}\to X_H$ above exhibits~$X_H$ as a quotient of~$X_{\ns}^+(27)$ of degree~$3=[N_s(27)\cap D:H]$.
\end{proof}

In \cite{rszbadic}, the authors use a slightly different construction of a quotient of~$X_{\ns}^+(27)_F$, which only uses the structure of~$X_H$ as a $\Q$-scheme. If we let~$X_{H,F}\coloneqq X_H\times_{\Spec(\Q)}\Spec(F)$, then~$X_{H,F}$
is the disjoint union of two geometrically connected $F$-curves. These two
components are conjugate to one another under the action of $\Gal(F/\Q)$
(not, as claimed in \cite[\S9.1]{rszbadic}, isomorphic to one another, as
one can verify by computing their Dixmier--Ohno invariants). 

The authors of \cite{rszbadic} compute an equation for one component~$X_H'$ of~$X_{H,F}$: it is the plane quartic curve given by
\begin{equation}
\label{Eqn: plane quartic}
\begin{gathered}
 f(a,b,c)\coloneqq  a^4 + (\zeta_3-1)a^3b + (3\zeta_3 + 2)a^3c-3a^2c^2 + (2\zeta_3 + 2)ab^3 - 3\zeta_3ab^2c \\
    \qquad\qquad \quad \mathop{+} 3\zeta_3abc^2 - 2\zeta_3ac^3 - \zeta_3 b^3c + 3\zeta_3 b^2 c^2 + (-\zeta_3 + 1)bc^3 + (\zeta_3 + 1)c^4=0\,.
\end{gathered}
\end{equation}

\begin{rmk}\label{Remark :XHvsXH'}
The two components of~$X_{H,F}$ are~$X_H$ and its conjugate. The conjugate of~$X_H$ is $F$-isomorphic to the modular curve associated to the subgroup
\[
\tilde H \coloneqq \begin{pmatrix}0&1\\1&0\end{pmatrix}\cdot H\cdot \begin{pmatrix}0&1\\1&0\end{pmatrix} \,,
\]
see Corollary~\ref{cor:base_change} in the appendix.

Thus~$X_H'$ is $F$-isomorphic to either~$X_H$ or~$X_{\tilde H}$, and either way it is a modular curve. At the time of writing, we do not know which of~$X_H$ or~$X_{\tilde H}$ is isomorphic to~$X_H'$.
\end{rmk}

Henceforth, we write $X$ for the curve $X_H'$. 
It has thirteen known $F$-rational points.  
Let $(x,y)$ denote the point $[a,b,c]=[x,y,1]$ on the curve determined by \eqref{Eqn: plane quartic}, and let $\infty_1=\left[1,0,0\right], \infty_2=\left[1,\zeta_3+1,0\right]$. Then the known points are 
\begin{equation} \label{listpts}
\begin{gathered}
      \biggl\{\left( 0,-\zeta_3-1 \right), \left( 1,-\zeta_3-1 \right),          \left( \zeta_3+1,-\zeta_3-1 \right),\left( 0,-\zeta_3 \right),  \biggr. \\
 \left.    
      \left( \zeta_3+1,0 \right), \left( 2\zeta_3+2,\zeta_3 \right),\left( \zeta_3 ,1\right),     
  \left( \frac{\zeta_3-3}{2},\frac{\zeta_3+2}{2} \right), \right. \\
  \biggl.
 \left( \frac{-\zeta_3-2}{3},\frac{\zeta_3+2}{3}  \right), \left( \frac{-\zeta_3}{2},\frac{-1}{2} \right), \left( \frac{5\zeta_3+4}{7}, -1 \right),
  \infty_1,\infty_2 \biggr\}.
  \end{gathered}
\end{equation}

The point \[ \left(\frac{\zeta_3-3}{2},\frac{\zeta_3+2}{2}
\right)\] is the only non-CM point in this list. 
The corresponding $j$-invariant is
\begin{equation*}\label{eqn: jinv}
    j_0\coloneqq
2^3 \cdot 5^3 \cdot 19^{-27} \cdot (1-\zeta_3)^6 \cdot (2-3\zeta_3)^{27}
  \cdot (27+13 \zeta_3)^3\cdot (54+49\zeta_3)^3\cdot (227+173 \zeta_3)^3\,.
\end{equation*}

\begin{rmk}
    The factorisation of the $j$-invariant $j_0$ given above differs from the one claimed in \cite[\S9.1]{rszbadic}. In fact, the latter does not lift to an $F$-point on $X_{\ns}^+(9)$. 

\end{rmk}

Our goal for the rest of this section is to show that $X(F)$ is exactly equal to the set of the thirteen points in~\eqref{listpts}.

\subsection {Arithmetic invariants of
\texorpdfstring{$X$}{X}}\label{S:invs}
We first discuss why the curve $X/F$ is a suitable candidate for quadratic Chabauty by computing some invariants of the curve and its Jacobian.

    The model $f=0$ of the curve as in~\eqref{Eqn: plane quartic} has bad reduction only at the unique prime above $3$. Indeed, 
    the norm of the discriminant of $f$ is $3^{54}$.

     The abelian variety $A\coloneqq \Res^F_{\Q}(J)$ is of $\GL_2$-type. By the proof of Serre's conjecture by Khare--Wintenberger \cite{khare2009serre}, this implies that $A$ is modular; that is, its $L$-function is the same as the $L$-function of a weight $2$ modular form.  By~\cite[\S9.1]{rszbadic}, the newform orbit associated to $A$ is the Galois orbit of the newform \href{https://www.lmfdb.org/ModularForm/GL2/Q/holomorphic/729/2/a/c/}{729.2.a.c},  
    and any newform in this orbit has analytic rank $1$.
    Since the Hecke algebra is isomorphic to $\Q(\zeta_{36})^+$, the
    maximal real subfield of $\Q(\zeta_{36})$, which has degree $6$, 
    the analytic rank of $A$ is $6$. By work of Gross--Zagier \cite{gross1986heegner} and Kolyvagin--Logachev \cite[Theorem 0.3]{kolyvaginlogachev}, we get \[\rk_{\Z} J(F)=\rk_{\Z}A(\Q)=6.\]
     Moreover, the form 
     \href{https://www.lmfdb.org/ModularForm/GL2/Q/holomorphic/729/2/a/c/}{729.2.a.c}  
     has exactly one non trivial inner twist.  Hence by work of Pyle \cite{Pyle}, $\End^0(J)$ is isomorphic to a subfield $K\subseteq \Q(\zeta_{36})^+$ such that $[\Q(\zeta_{36})^+:K]=2$. Thus we deduce $K=\Q(\zeta_9)^+$. One can alternatively show this using the algorithms in \cite{CMSV_endo}.

As a consequence, we may apply quadratic Chabauty to $X/F$, since we have
\begin{equation}\label{invariantsX}
g=3, \;d=2, \;r=6,\; r_{\NS}=3\,\; \text{and}\; r_2=1\,.
\end{equation}

\subsection{Set up}

To implement our algorithm to compute quadratic Chabauty pairs~$(\rho,\Upsilon)$ for the curve~$X$, we require the following inputs:

\begin{enumerate}
\item An appropriately chosen rational prime $p$.
\item Two independent id\`ele class characters~$\chi^{(1)},\chi^{(2)}$ on $F$.

\item A polynomial $Q(x,y)$ whose vanishing locus is birational to an open
  affine $Y\subseteq X$. We require this polynomial $Q$ to satisfy
    additional conditions which are discussed in~\S\ref{Subsection: Affine patch}. We provide a suitable choice of $Q$ in \eqref{Eqn: Q}.

\item Differentials $\omega_0,\ldots,\omega_{2g-2+\delta}$  as in
  \eqref{eq:211}--\eqref{eq:214}. 
\item Splittings $s_{\fp_1}, s_{\fp_2}$ of the Hodge filtration on $H^1_{\dR}(X_{F_{\fp_i}})$ satisfying Lemma~\ref{L:equivsplit}.

\item The action of two independent cycles $Z_1,Z_2\in   \ker(\NS(J)\to \NS(X))$ on $H^1_{\dR}(X)$ with respect
  to the basis $\vecc{\omega}$. See~\S\ref{Subsection: Correspondences}.

\item The finitely many values the local heights $h_{\mfq}$ for $\mfq \nmid p$ can take. We show these values are $0$ for all $\mfq\nmid p$ for $X$ in~\S\ref{Sec: Hts for X away}.
\end{enumerate}
 
 Recall that our rational prime needs to be split in $F$, so we need $p\equiv
 1\pmod 3$. We also require all $\fp\mid p$ to satisfy
 Tuitman's Assumption 1 \cite{tuitman2017counting}. We choose $p=13$, which is the
 smallest prime satisfying these conditions, and we
let $\fp_1,\fp_2$ be the primes of $F$ above~13.

 The space of $\Q_p$-valued continuous id\`ele class characters on $F$ is
 two-dimensional. We pick the characters $\chi^{(1)},\chi^{(2)}$ such that
 $\chi^{(1)}_{\fp_1}=\log_{\fp_1}, \chi^{(1)}_{\fp_2}=0$ and
 $\chi^{(2)}_{\fp_2}=\log_{\fp_2}, \chi^{(2)}_{\fp_1}=0$, where we choose
 the logarithms that vanish
 at uniformisers. See~\cite[\S2.1]{BBBMnumberfields} for a discussion of
 explicit computations with id\`ele class characters.

\subsection{A suitable affine patch} \label{Subsection: Affine patch}

We require $Q\in F[x,y]$ to satisfy the following conditions:
\begin{enumerate}
\item For $\fp\mid p$, the polynomial $Q$ has $\fp$-integral coefficients, is monic in $y$ and satisfies the conditions in \cite[Assumption 1]{tuitman2016counting}. \label{Condition: (1) for Q}

\item All residue polydiscs in $X(F\otimes\Q_p)$ are good with
  respect to the affine patch $Y$ defined by $Q(x,y)=0$.\label{Condition: (2) for Q}
\end{enumerate}

We need a $Q$ that satisfies~\eqref{Condition: (1) for Q} to run Tuitman's algorithm, as
discussed in~\S\ref{subsec:Hodge} and~\S\ref{subsec:Frob}. Condition \eqref{Condition: (2) for Q} was not necessary in previous
implementations of quadratic Chabauty over $\Q$
\cite{balakrishnan_dogra_müller_tuitman_vonk_2023}, where one could work
with two affine opens $Y_1, Y_2\subseteq X$ such that every residue disc in $X(\Q_p)$ is good with respect to $Y_1$ or $Y_2$.

Over $F=\Q(\zeta_3)$, if one chooses two affine opens $Y_1,Y_2$ covering
$X$ such that neither satisfies Condition~\eqref{Condition: (2) for Q}, one
also needs to deal with residue discs of the form $D_1\times D_2$, where
$D_i$  is a good residue disc of the affine patch
$Y_{i,{F_{\fp_i}}}$. The current
implementation of our algorithm only lets us compute approximations of
Coleman functions on products of residue discs $D_1\times D_2\subseteq
Y_{F_{\fp_1}}\times Y_{{F_{\fp_2}}}$, for a choice of affine open
$Y\subseteq X$. Therefore, Condition \eqref{Condition: (2) for Q} ensures that we can use just one patch to account for all residue discs on $X(F\otimes\Q_p)$. 
The choice of $Q$ we use satisfying the required conditions is
\[\label{Eqn: Q}
\begin{gathered}
  Q(x,y) =y^4 + ((-2\zeta_3 + 9)x + (2\zeta_3 + 3))y^3 + (-3x^2 + 6x - 3)y^2 \\
\mathop{+}\, ((-170\zeta_3 + 254)x^3 + (-150\zeta_3 + 114)x^2 + (-54\zeta_3 + 18)x - 10\zeta_3 - 2)y \\
\mathop{+}\, (162\zeta_3 + 144)x^4 + (-108\zeta_3 + 48)x^3 + (-72\zeta_3 - 144)x^2 + (12\zeta_3 - 48)x + 6\zeta_3.
\end{gathered}
\]
We compute the differentials $\omega_i$ as described in~\S\ref{subsec:Hodge}, and we apply linear algebra to find splittings $s_{\fp_1}, s_{\fp_2}$ satisfying Lemma~\ref{L:equivsplit}.

\subsection{Hecke correspondence} \label{Subsection: Correspondences}

As discussed in~\S\ref{S:invs}, the endomorphism algebra of $J$ is $K
\coloneqq \Q[x]/(x^3 - 3x - 1)=\Q(\zeta_9)^+$. We need to 
explicitly describe the action of a nontrivial element $Z\in \ker(\NS(J)\to \NS(X))$ on
$H^1_{\dR}(X)$. Recall from Remark~\ref{Remark :XHvsXH'} that $X$ is $F$-isomorphic to a modular
curve. For $X_H$, we can define Hecke operators and Hecke correspondences by
choosing coset representatives appropriately; see, for example, \cite[Lemma
4.4.14]{assafmodularforms}. These Hecke correspondences are automatically rational over the ground field~$F$ \cite[Theorem~7.9]{shimura1971introduction}. One can calculate the action of a Hecke
correspondence by using the Eichler--Shimura relation as discussed in Remark~\ref{R:Z}.
Fix a prime $\fp\mid p$. We denote by $\Frob_p$ absolute
Frobenius on $X_{O_F/\fp}$, and we let $\Frob_p^*$ be the pullback of
absolute Frobenius on rigid cohomology and, consequently, de Rham
cohomology since $H^1_{\rig}(X_{O_F/\fp})\cong H^1_{\dR}(X_{F_{\fp}})$. 

 By the Eichler--Shimura relation, the action of the Hecke operator $T_p$ on $H^1_{\dR}(X_{F_{\fp}})$ is 
\[T_p=\Frob^*_{p}+p(\Frob_{p}^*)^{-1},\] 
following \cite[Corollary 7.10]{shimura1971introduction}. 
  To use this in practice, we fix a sequence of $2g$ differentials
  $\vecc{\omega}$ as in \eqref{eq:211}-\eqref{eq:214}. Let $\Frob$ be the
  matrix describing the lift of Frobenius used by Tuitman, and let $H_\fp$ be the matrix describing the action of the Hecke correspondence on $H^1_{\dR}(X_{F_{\fp}})$, both with respect to the basis of $H^1_{\dR}(X_{F_{\fp}})$ determined by $\vecc{\omega}$. 

Let $C$ be the cup product matrix on $H^1_{\dR}(X_{F_{\fp}})\times H^1_{\dR}(X_{F_{\fp}})$ with respect to $\vecc{\omega}$. For $i=1,2$, let \begin{equation}\label{Eqn: Z}
    Z_i\coloneqq (\Tr(H_\fp^i)I_{2g}-2gH_\fp^i)C^{-1}.
\end{equation}
The matrices $Z_1,Z_2$ represent endomorphisms of $H^1_{\dR}(X_{F_{\fp}})$ that have trace zero and correspond to independent elements of
  $\ker(\NS(J)\to \NS(X))$ (see
  \cite[Section~3.5.2]{balakrishnan_dogra_müller_tuitman_vonk_2023}).

Having two independent correspondences is useful for solving for the height pairing since we get more elements $e(x)$ for fitting the height pairing as discussed in Remark \ref{Remark : More correspondences}. Moreover, by also using two id\`ele class characters, we obtain four quadratic Chabauty pairs. We expect two such pairs to suffice to give a finite set of points in $X(F\otimes\Q_p)$ containing $X(F)$, but by using the other pairs as well, we expect the common zero set to be precisely $X(F)$.

\begin{Remark}
    Since $T_p$ is actually a correspondence on $X$ over $F$, the matrix
    $H_\fp\in M_{2g\times2g}(\Q_p)$ is the $p$-adic approximation of a matrix in $M_{2g\times 2g}(F)$.
    We found it convenient to recognise the entries of
    the $\Q_p$-matrices $Z_i$ as algebraic numbers using LLL with $344$
    $p$-adic digits of precision. Although we expect that these algebraic
    numbers are exactly equal to the entries of the $Z_i$, we do not explicitly
    prove this. This, in fact, suffices for our 
     algorithms: they all work directly over $\Q_p$, except for the one
     described in~\S\ref{subsec:Hodge}, which requires the entries of $Z_i$
     to be in $F$. However, for all algorithms, it suffices to show that
     these entries are accurate up to a
    suitable explicit $\fp$-adic precision, and this can be obtained from
    the accuracy of the matrices of Frobenius, which is known from the
    analysis of Tuitman's algorithm in~\cite{tuitman2016counting,
    tuitman2017counting}.
\end{Remark}
\subsection{Local heights away from  \texorpdfstring{$p$}{p} } \label{Sec: Hts for X away}

In this section, we determine the local heights on the curve $X$ away from
$p$. We fix a choice of~$Z\in\ker(\NS(J)\to\NS(X))$ and an id\`ele class character~$\chi$, and abbreviate the local height $h_{Z,\mfq}^{\chi_{\mfq}}$ to~$h_{\mfq}$ to avoid notational clutter. We will show  
\begin{prop}\label{prop: local_heights_trivial}
    Let~$\mfq$ be a prime of~$F$, not dividing~$p$. Then $h_{\mfq}(x)=0$ for all~$x\in X(F_\mfq)$.
    \end{prop}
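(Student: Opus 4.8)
The plan is to invoke the reduction-type criterion for local heights recorded in Lemma~\ref{Lemma: BettsDogra height}, reducing the vanishing to a statement about the special fibre of a regular semistable model of~$X_{F_\mfq}$ at each bad prime~$\mfq\nmid p$. First I would observe that for a place~$\mfq$ of good reduction the representation~$A_\mfq(x)$ is unramified (by Lemma~\ref{L:tauprops} and the discussion following it), so~$h_\mfq$ factors through Kim's higher Albanese map~$j_\mfq$, whose image is trivial for places of potentially good reduction by~\cite[Lemma~3.2]{BDMTV_split}; hence~$h_\mfq\equiv 0$ there. Since the norm of the discriminant of~$f$ in~\eqref{Eqn: plane quartic} is~$3^{54}$, the only prime of~$F$ at which the given model has bad reduction is the unique prime~$\mfq_3$ above~$3$, which divides~$3$ and not~$p=13$. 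So the only remaining case is~$\mfq=\mfq_3$.

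For~$\mfq=\mfq_3$, the strategy is to exhibit a regular (semistable, after a finite base change~$L/F_{\mfq_3}$) model~$\cX$ of~$X_L$ whose special fibre has a \emph{single} irreducible component — equivalently, to show that~$X$ has potentially good reduction at~$\mfq_3$. If the special fibre is irreducible, then every~$L$-point (in particular the base point~$b$) reduces onto that one component, and Lemma~\ref{Lemma: BettsDogra height} immediately gives~$h_{\mfq_3}^\chi(x)=h_{\mfq_3}^\chi(b)=0$ for all~$x\in X(L)$, hence for all~$x\in X(F_{\mfq_3})$. Concretely, since~$X$ is ($F$-isomorphic to) the modular curve~$X_H$ or~$X_{\tilde H}$ of level~$27$ (Remark~\ref{Remark :XHvsXH'}), and~$3\mid 27$ is the only prime where modular curves of this level can have bad reduction, one expects potentially good reduction at~$\mfq_3$; this can be checked directly by computing a regular model of the plane quartic~$Q=0$ (or the model~$\eqref{Eqn: plane quartic}$) over~$O_{F_{\mfq_3}}$ — e.g.\ via repeated blow-ups, or using the cluster-picture / minimal-model machinery — and verifying that the special fibre of the stable model is a single smooth genus~$3$ curve (so the reduction is not merely potentially good but the stable reduction is good).

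The main obstacle I anticipate is the explicit model computation at~$3$: the curve is a smooth plane quartic over a ramified-at-$3$ local field, so resolving the (badly reduced) given model and identifying the stable model requires some care — one must either carry out a blow-up computation by hand over~$O_{F_{\mfq_3}}$ (or a suitable ramified extension), or appeal to a criterion guaranteeing potentially good reduction. A convenient shortcut, if available, is to use that~$A=\Res^F_\Q J$ is modular attached to the newform orbit of~\href{https://www.lmfdb.org/ModularForm/GL2/Q/holomorphic/729/2/a/c/}{729.2.a.c}: the conductor~$729=3^6$ being a power of~$3$ shows~$J$ has good reduction away from~$3$, and the precise shape of the local representation at~$3$ (or a direct Magma computation of the regular model) then pins down potentially good reduction at~$\mfq_3$. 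Once potentially good reduction is established, Lemma~\ref{Lemma: BettsDogra height} finishes the proof with essentially no further work, and as noted this yields~$\Upsilon=\{0\}$ for the curve~$X$.
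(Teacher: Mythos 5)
Your proposed route breaks down at the crucial step: the curve $X$ does \emph{not} have potentially good reduction at $\mfq_3$, so the verification you anticipate (computing a semistable model and checking that its special fibre is a single smooth genus $3$ curve) would in fact refute, rather than confirm, your strategy. By Ossen's explicit computation \cite[Theorem~1]{ossen2023computing}, recalled in the paper as Theorem~\ref{Thm: Regular semistable model at 3}, the stable model of $X_L$ over a suitable finite Galois extension $L/F_3$ (of ramification index $54$) has a special fibre with \emph{four} irreducible components: three genus~$1$ components $E_1,E_2,E_3$ and one rational component. The modularity shortcut does not rescue the argument either: the conductor $729=3^6$ tells you something about the reduction of $J=\Jac(X)$, but good (or potentially good) reduction of the Jacobian does not imply good reduction of the curve. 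Indeed here the dual graph of the stable reduction is a tree, so $J$ has potentially good reduction at $3$, while $X$ manifestly does not.

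Since the special fibre is reducible, Lemma~\ref{Lemma: BettsDogra height} alone is not enough; one must show that every $F_3$-point of $X$ reduces onto one and the same component of a regular semistable model. This is exactly what the paper does via a Galois argument. Lemma~\ref{Lemma: transitive action} shows that $\Gal(L/F_3)$ acts transitively on the three genus~$1$ components $E_1,E_2,E_3$; the key input is the irreducibility over $F_3$ of the degree~$9$ polynomial $\psi$ in \eqref{Eqn: discoid} used to construct the semistable model of $\P^1$ below $X$. Because the reduction of an $F_3$-rational point in a regular model lies on a unique component and is Galois-invariant, it cannot lie on any $E_i$, hence must lie on the rational component. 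Passing from the stable model to the minimal regular model $\cX^{\min}$ of $X_L$ (which dominates it, by \cite[Theorem~10.3.34]{liu:algebraic_geometry}, and is an isomorphism away from the singular points of the stable special fibre), all $F_3$-points — in particular the base point $b$, which lies in $X(F)\subset X(F_3)$ — reduce onto the unique component of $\cX^{\min}_s$ dominating the rational one. Now Lemma~\ref{Lemma: BettsDogra height} gives $h_{\mfq_3}(x)=h_{\mfq_3}(b)=0$ for all $x\in X(F_3)$. Your handling of the good-reduction places $\mfq\ne\mfq_3$ via Lemma~\ref{Lemma: BettsDogra height} (or via triviality of Kim's local map for potentially good reduction) is correct and matches the paper.
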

Since~$X$ has good reduction away from the prime $\mfq_3=(1-\zeta_3)$,
the local heights at all of these primes are certainly trivial by Lemma~\ref{Lemma: BettsDogra height}. The content of Proposition~\ref{prop: local_heights_trivial} lies then in tackling the prime~$\mfq=\mfq_3$. Henceforth, we will write~$F_3$ for~$F_{\mfq_3}=\Q_3(\zeta_3)$. By Lemma~\ref{Lemma: BettsDogra height}, it suffices to show that all $F_3$-points of $X$ reduce onto the same component of the special fibre of some regular semistable model of~$X$ over some finite extension of~$F_3$. In our case, the requisite models were computed by Ossen.
\begin{theorem}
[\protect{\cite[Theorem 1] {ossen2023computing}}]\label{Thm: Regular semistable model at 3}
There exists a finite Galois extension $L/F_3$ of ramification index $54$
  over which $X_L$ has semistable reduction. Moreover the special fibre of
  the stable model of $X_L$ consists of three genus~1 components
  $E_1,E_2,E_3$ and one rational component $Z$.  Each genus~1 component is smooth and is birational to the plane curve \begin{equation}\label{Eqn: Genus 1 curves in SMP}
           y^3-y=x^2.
       \end{equation} The irreducible components are configured as in
       Figure ~\ref{Fig: Special fibre stable}. \end{theorem}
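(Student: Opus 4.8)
The strategy, realised by Ossen's algorithm, is to present $X$ as a cyclic degree-$3$ (Kummer) cover of the projective line in residue characteristic $p=3$ and to compute the stable reduction of this wildly ramified cover. First I would find, after a linear change of coordinates over $F$, a model of the plane quartic \eqref{Eqn: plane quartic} of the shape $q_4(a,b)+c^3\ell(a,b)=0$ with $q_4$ a quartic form and $\ell$ a linear form; setting $x=a/b$ and clearing denominators this gives a model $w^3=g(x)$ with $g(x)=q_4(x,1)\,\ell(x,1)^2$ (up to sign), exhibiting $X$ as a $\mu_3$-cover $\varphi\colon X\to\P^1$ with deck transformation $[a:b:c]\mapsto[a:b:\zeta_3 c]$. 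This cover is branched over the four roots of $q_4$ and the root of $\ell$ and unramified at $x=\infty$; a Riemann--Hurwitz count confirms that $5$ branch points is consistent with $g(X)=3$. That such coordinates exist can be checked directly from the explicit equation, or deduced from the modular origin of $X$.

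Next I would run the algorithm for semistable reduction of $\Z/p$-covers of $\P^1$ in residue characteristic $p$ (following Bouw--Wewers and Lehr--Matignon): after a finite base change $L/F_3$ one produces the minimal tree of closed discs on the $x$-line such that, over each vertex $v$, the cover $\varphi$ reduces to an Artin--Schreier cover $\bar w^3-\bar w=\bar g_v$ of the residue curve $\P^1_{\bar\F_3}$, whose conductor is read off from the valuation data of $g$ at $v$. The expected output is three distinguished vertices at which $\bar g_v$ has a single pole of order $2$, each producing --- by Riemann--Hurwitz in characteristic $3$ --- a smooth genus-$1$ component birational to \eqref{Eqn: Genus 1 curves in SMP} (which is moreover supersingular, the unique elliptic curve over $\bar\F_3$ with $j=0$), while over the remaining vertices the reduction has genus $0$ and contracts to a single rational component $Z$; the dual graph is then the star with centre $Z$ and leaves $E_1,E_2,E_3$, as in Figure~\ref{Fig: Special fibre stable}. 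The arithmetic genus of this special fibre is $1+1+1+0=3=g(X)$, confirming correctness, and since no component other than $Z$ is a $\P^1$ meeting the rest in at most two points, the semistable model is already stable. Tracking the cube roots and uniformiser powers that must be adjoined to put $\varphi$ in this normal form pins down the ramification index $e(L/F_3)=54$, consistent with the norm of $\disc(f)$ being $3^{54}$.

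The main obstacle is the combinatorial heart of the second step: determining the precise tree of discs and proving that the reductions are \emph{exactly} the three stated Artin--Schreier curves with the stated intersection pattern, which requires carefully controlling the wild ramification and verifying that $e(L/F_3)$ is precisely $54$ rather than some proper multiple. A secondary difficulty is making the first step explicit; should the Kummer presentation prove awkward to exhibit, the fallback is the model-theoretic route --- take an $O_{F_3}$-model of $X$, resolve singularities and perform normalised blow-ups and base changes until the total space is regular with semistable special fibre, then contract --- but this is precisely the laborious computation Ossen's algorithm is designed to automate. As a final sanity check, one can compare the answer with the Deligne--Rapoport/Katz--Mazur description of the mod-$3$ fibre of $X_{\ns}^+(27)$, a union of Igusa-type curves meeting at supersingular points, pushed through the degree-$3$ quotient $X_{\ns}^+(27)_F\to X$.
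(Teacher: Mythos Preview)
This theorem is not proved in the present paper; it is quoted from Ossen, and the surrounding text only sketches his method: a degree-$3$ map $\phi\colon X_{F_3}\to\P^1_{F_3}$, a tree of $3$-adic discs on the target governed by the degree-$9$ polynomial $\psi$ of~\eqref{Eqn: discoid}, and normalisation of the resulting semistable model of $\P^1$ in the function field of $X$. Your broad shape --- cover $\P^1$, choose a disc tree downstairs, read off the reduction upstairs --- is therefore the right one.

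The genuine gap is your first step. You claim that after a linear change of coordinates over $F$ the quartic becomes $q_4(a,b)+c^3\ell(a,b)=0$, exhibiting $X$ as a $\mu_3$-Kummer cover $w^3=g(x)$ with deck transformation $[a:b:c]\mapsto[a:b:\zeta_3 c]$. Any such presentation gives an order-$3$ automorphism of the smooth plane quartic $X$ over $F$, hence (since $X$ is non-hyperelliptic of genus $\ge 2$) a primitive cube root of unity in $\End^0_F(J)$. But \S\ref{S:invs} computes $\End^0_F(J)\cong\Q(\zeta_9)^+$, a totally real cubic field, which contains no primitive cube root of unity; the Kummer presentation therefore does not exist over $F$, and your appeal to ``checked directly from the explicit equation, or deduced from the modular origin of $X$'' cannot succeed. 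The Bouw--Wewers/Lehr--Matignon machinery you invoke is tailored to cyclic $p$-covers and their Artin--Schreier reductions, and does not apply to the non-Galois degree-$3$ map that is actually available; Ossen instead normalises in a non-Galois degree-$3$ extension, with the disc combinatorics controlled by the nine roots of $\psi$ rather than by the five branch points your Riemann--Hurwitz count produces. Beyond this structural error, the remainder of your proposal (``the expected output is\ldots'', the value $e(L/F_3)=54$, the Deligne--Rapoport cross-check) consists of heuristics rather than arguments and would in any case have to be redone for the correct cover.
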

\begin{figure}[ht]
  \centering
  \includegraphics[scale=0.25]{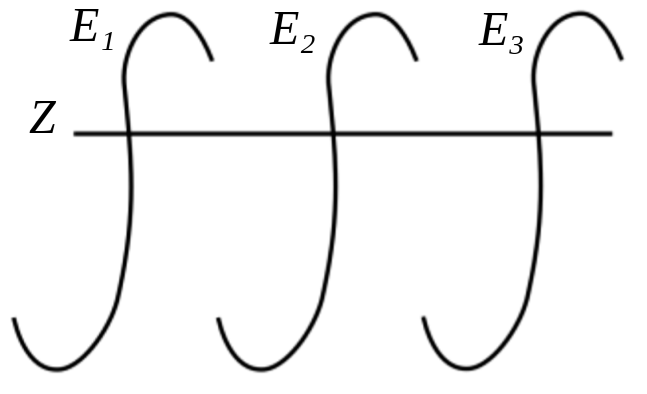}
  \caption{The special fibre of the stable model of $X_L$ \cite[Figure 2]{ossen2023computing}.}
  \label{Fig: Special fibre stable}
\end{figure}
  Ossen produces the stable model in Theorem~\ref{Thm: Regular semistable model at 3} by using a degree~$3$ map $\phi\colon X_{F_3} \to \P^1_{F_3}$ and taking the normalisation of a suitable semistable model~$\cY$ of~$\P^1_{\bar F_3}$ in~$X_{\bar F_3}$. To produce the model~$\cY$, Ossen considers the degree~$9$ polynomial
\begin{align}\label{Eqn: discoid}
\begin{split}
\psi=  x^9 + (9\zeta_3 - 9)x^8 + (54\zeta_3 + 27)x^7 + (54\zeta_3 - 27/2)x^6 + (243\zeta_3 + 972)x^5 + 729\zeta_3x^4\\
\mathop{+}\, (2916\zeta_3 - 1458)x^3 + (37179\zeta_3 + 41553)x^2 + (6561\zeta_3 + 6561/8)x - 63423\zeta_3 + 155277,
\end{split}
\end{align}
which is irreducible over~$F_3$. For each root~$\alpha$ of~$\psi$, there are exactly two other roots a distance $3^{-3/2}$ from~$\alpha$ ($3$-adically), and the remaining six roots lie at a distance $3^{-7/6}$ from~$\alpha$.

Let~$D$ be the closed $3$-adic disc of radius $3^{-7/6}$ containing all nine roots of~$\psi$, and let~$D_1,D_2,D_3$ be the three discs of radius $3^{-17/12}$ containing three roots of $\psi$ each. Corresponding to the configuration of discs $\{D,D_1,D_2,D_3\}$ is a semistable model~$\cY$ of $\P^1_{\bar F_3}$, as described in \cite[\S1]{ossen2023computing} or \cite[Corollary~3.18 \& Theorem~4.56]{ruth2015models}. We can construct~$\cY$ explicitly as follows. Let~$\alpha$, $\alpha_1$, $\alpha_2$, and $\alpha_3$ be centres of the discs $D$, $D_1$, $D_2$, and $D_3$, respectively. Then~$\cY$ is the projective closure of the blowup of $\Spec(O_{\bar F_3}[3^{-7/6}(x-\alpha)])$ at the ideals $(3^{-11/12}(x-\alpha),3^{-7/6}(x-\alpha_i))$ for~$i=1,2,3$. The special fibre of~$\cY$ is then the union of four components $Y,Y_1,Y_2,Y_3$ arranged as in Figure~\ref{F:Y}.
\begin{figure}[ht]
\includegraphics[scale=0.25]{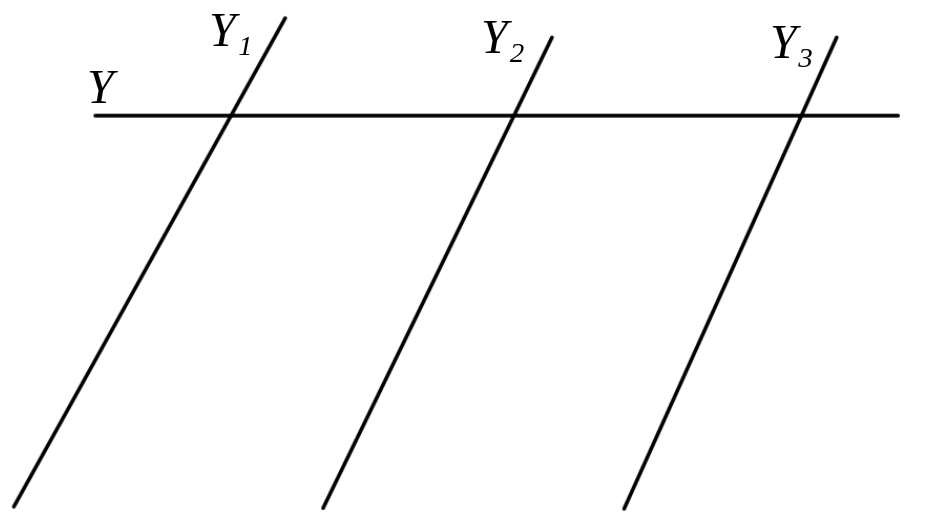}
\caption{The special fibre of~$\cY$.}\label{F:Y}
\end{figure}

Ossen shows that the normalisation of~$\cY$ in~$X_{\bar F_3}$ is a stable model~$\cX$ of~$X_{\bar F_3}$, with special fibre as depicted in Figure~\ref{Fig: Special fibre stable}, and the induced map $\cX_s\to\cY_s$ on special fibres maps the $\P^1$ component of~$\cX_s$ surjectively to~$Y$, and the three genus one components surjectively to~$Y_1,Y_2,Y_3$, respectively.
The Galois group of~$F_3$ acts on~$\cX$ (semilinearly over~$O_{\bar F_3}$, i.e.~so that the map $\cX\to\Spec(O_{\bar F_3})$ is Galois-equivariant), and thus on the special fibre. We have

\begin{lemma}\label{Lemma: transitive action}    $\Gal(L/F_3)$ acts transitively on the three genus one components of~$\cX_s$.
\end{lemma}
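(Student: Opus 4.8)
The plan is to reduce the lemma, via the degree $3$ map $\phi\colon X_{F_3}\to\P^1_{F_3}$ and Ossen's construction of $\cX$ as the normalisation of $\cY$, to the transitivity of the Galois action on the roots of the polynomial $\psi$ in~\eqref{Eqn: discoid}, which is immediate from its irreducibility over $F_3$.

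First I would check that the map $\cX_s\to\cY_s$ is $\Gal(\bar F_3/F_3)$-equivariant for the semilinear action on $\cX$ recalled just above the statement. The point is that $\cY$ is the semistable model of $\P^1_{\bar F_3}$ canonically associated with the configuration of discs $\{D,D_1,D_2,D_3\}$, and since $\psi$ has coefficients in $F_3$ this configuration is stable under $\Gal(\bar F_3/F_3)$: a Galois element permutes the roots of $\psi$ and preserves $3$-adic distances, hence permutes the discs. Thus $\cY$ inherits a compatible semilinear Galois action, and because $\phi$ is defined over $F_3$, so does its normalisation $\cX$ in $X_{\bar F_3}$; by uniqueness this is the action already fixed on $\cX$, and it descends to $\Gal(L/F_3)$. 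Under the induced equivariant map $\cX_s\to\cY_s$, the component $E_i$ surjects onto $Y_i$ and $Y_i$ is the exceptional component of $\cY_s$ attached to the disc $D_i$; hence the labelled bijection $E_i\leftrightarrow Y_i\leftrightarrow D_i$ is Galois-equivariant, and it suffices to prove that $\Gal(L/F_3)$ permutes $D_1,D_2,D_3$ transitively.

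Now $D_1,D_2,D_3$ correspond to the partition of the nine roots of $\psi$ into the three triples they contain, and this partition is Galois-stable: by the analysis of root distances following~\eqref{Eqn: discoid}, two roots $\alpha,\beta$ of $\psi$ lie in the same disc precisely when $|\alpha-\beta|\le 3^{-3/2}$ (the only other occurring distance being $3^{-7/6}$), and $|\sigma\alpha-\sigma\beta|=|\alpha-\beta|$ for every $\sigma\in\Gal(\bar F_3/F_3)$. Since $\psi$ is irreducible over $F_3$, the Galois group acts transitively on its nine roots, and a group acting transitively on a nine-element set while preserving a partition into three blocks of three necessarily acts transitively on the blocks (if $\sigma$ sends a point of one block to a point of another, it sends the whole first block to the second). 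Therefore $\Gal(L/F_3)$ acts transitively on $\{D_1,D_2,D_3\}$, hence on $\{Y_1,Y_2,Y_3\}$, hence on $\{E_1,E_2,E_3\}$, which is the assertion. The only step requiring genuine care is the Galois-equivariance of Ossen's construction of $\cY$ and $\cX$ from the disc configuration, together with the compatibility of the labellings $Y_i\leftrightarrow D_i$; once this is in place the remaining argument is elementary combinatorics.
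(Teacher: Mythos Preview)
Your proof is correct and takes essentially the same approach as the paper: both reduce the transitivity on $\{E_1,E_2,E_3\}$ to transitivity on the discs $\{D_1,D_2,D_3\}$ via the irreducibility of~$\psi$ over~$F_3$, using that the Galois action preserves $3$-adic distances and that the construction $\cX\to\cY$ is Galois-equivariant. The paper carries this out by tracking a single root $\alpha$ and a chosen preimage $\alpha'\in\phi^{-1}(\alpha)$ through the reduction map, whereas you argue more structurally by first establishing equivariance of the labelled bijection $E_i\leftrightarrow Y_i\leftrightarrow D_i$ and then invoking block combinatorics; these are two presentations of the same argument.
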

\begin{proof}
  Let~$E_i$ denote the genus one component living over~$Y_i$. 
  Let~$\alpha\in\P^1(\bar F_3)$ be a root of~$\psi$ inside the disc~$D_1$, so the reduction of~$\alpha$ in the model~$\cY$ lies on the component~$Y_1$. Since~$\psi$ is irreducible, there is an element~$\sigma\in G_{F_3}$ such that~$\sigma(\alpha)$ lies in the disc~$D_2$, and so reduces onto~$Y_2$.

    Choose a lift~$\alpha'\in\phi^{-1}(\alpha)$ to~$X(\bar F_3)$. The reduction of~$\alpha'$ in the model~$\cX$ must lie on the component~$E_1$, since it maps to a point in $Y_1\subset\cY_s$. The point~$\sigma(\alpha')$ is a lift of~$\sigma(\alpha)$, so reduces onto the component~$E_2$ similarly. Since the reduction map $X(\bar F_3) \to \cX(\bar\F_3)$ is Galois-equivariant, this implies that~$\sigma(E_1)=E_2$. Since our labelling of the components~$E_1$, $E_2$, and~$E_3$ was arbitrary, this implies that~$G_{F_3}$ acts transitively.
\end{proof}

As a consequence of Lemma~\ref{Lemma: transitive action}, it follows that the reduction of any $F_3$-rational point of~$X$ must be a smooth point of~$\cX_s$ lying on the genus~$0$ component (since the reduction of this point is Galois-invariant).

Now let~$L/F_3$ be a finite extension over which~$X$ has semistable reduction, and by mild abuse of notation let~$\cX$ also denote the stable model over~$O_L$. We may suppose without loss of generality that all four geometric components of~$\cX_s$ are defined over the residue field of~$L$. By \cite[Theorem~10.3.34]{liu:algebraic_geometry}, the minimal regular model~$\cX^{\min}$ of~$X_L$ is semistable, and the stable model~$\cX$ is formed by contracting all components of the special fibre of~$\cX^{\min}$ of self-intersection~$-1$, which become singular points in~$\cX$. In particular, $\cX^{\min}$ dominates~$\cX$ and the map~$\cX^{\min}\to\cX$ is an isomorphism away from the singular points of~$\cX_s$. Thus, we obtain
\begin{lemma}
    All $F_3$-rational points on~$X$ reduce onto the same component of the special fibre of~$\cX^{\min}$, namely the unique component dominating the genus~$0$ component of~$\cX_s$.
\end{lemma}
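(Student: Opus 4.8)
The plan is to deduce this from the reduction statement already obtained for the stable model~$\cX$, via the contraction morphism $c\colon\cX^{\min}\to\cX$ recalled above. First I would fix $x\in X(F_3)$ and base change along $F_3\hookrightarrow L$ to obtain a point $x_L\in X_L(L)$; since~$\cX^{\min}$ is proper over~$O_L$, the valuative criterion of properness extends~$x_L$ uniquely to a section $\Spec(O_L)\to\cX^{\min}$, whose reduction I call $\bar x\in\cX^{\min}_s$. Composing with~$c$ recovers the section of~$\cX$ attached to~$x_L$, so $c(\bar x)$ is the reduction of~$x_L$ in~$\cX$; by uniqueness of the stable model this agrees, after base change, with the reduction considered in the discussion following Lemma~\ref{Lemma: transitive action}, which showed it to be a smooth point of~$\cX_s$ lying on the genus~$0$ component.

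Next I would exploit that~$c$ is a proper birational morphism restricting to an isomorphism over the complement of the finitely many nodes of~$\cX_s$ (these being precisely the points blown up in passing from~$\cX$ to~$\cX^{\min}$). Since~$c(\bar x)$ is a \emph{smooth} point of~$\cX_s$, it is not one of those nodes, so~$\bar x$ lies in the open subscheme $U\subseteq\cX^{\min}$ on which~$c$ is an isomorphism onto its image. Letting $C_0$ denote the Zariski closure in~$\cX^{\min}_s$ of the locus of~$U$ lying over the genus~$0$ component of~$\cX_s$, this~$C_0$ is the unique irreducible component of~$\cX^{\min}_s$ mapping birationally (in particular dominantly) onto the genus~$0$ component, all other components of~$\cX^{\min}_s$ sitting over~$\cX_s$ being contracted into nodes; and by construction $\bar x\in C_0$. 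Since~$C_0$ is visibly independent of~$x$, all $F_3$-rational points of~$X$ reduce onto~$C_0$, which is the assertion of the lemma.

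The only genuinely delicate point — the \textbf{main obstacle}, such as it is — is the model bookkeeping needed to identify the reduction of~$x_L$ in~$\cX$ with the reduction used in the earlier argument, since there the stable model was presented over~$O_{\bar F_3}$ via Ossen's disc configuration, whereas here we work over~$O_L$. This is resolved by appealing to uniqueness of the stable model together with the compatibility of reductions of points with the faithfully flat base change $O_L\to O_{\bar F_3}$: both reductions arise as the image of a single $\bar F_3$-point of the stable model over~$O_{\bar F_3}$, which by hypothesis reduces onto the genus~$0$ component. Everything else in the argument is formal.
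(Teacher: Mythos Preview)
Your proposal is correct and follows essentially the same approach as the paper: the paper's ``proof'' is really just the paragraph immediately preceding the lemma, which observes that the contraction $\cX^{\min}\to\cX$ is an isomorphism away from the singular points of~$\cX_s$, and combines this with the earlier consequence of Lemma~\ref{Lemma: transitive action} that $F_3$-points reduce to smooth points on the genus~$0$ component. You have spelled out the same argument in more detail, including the valuative-criterion step and the identification of the target component~$C_0$; your discussion of the model bookkeeping over~$O_L$ versus~$O_{\bar F_3}$ makes explicit what the paper sweeps under ``by mild abuse of notation,'' but the substance is identical.
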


This implies, via Lemma~\ref{Lemma: BettsDogra height}, that all
$F_3$-rational points on~$X$ have local height~$0$, and thus completes the proof
of Proposition~\ref{prop: local_heights_trivial}. \qed

\subsection{Local computations above \texorpdfstring{$p$}{p} and root finding}

Using Algorithm \ref{Algo: Hodge filtration} and Algorithm \ref{Algorithm: Frobenius structure}, we expand, for $i\in \{1,2\}$, the local heights $h^{\chi^{(1)}}_{\fp_1,Z_i}$ and $h^{\chi^{(2)}}_{\fp_2,Z_i}$ in good residue discs. We can also compute them at local points and use this to compute 
$h^{\chi^{(1)}}_{Z_i}=h^{\chi^{(1)}}_{\fp_1,Z_i}$ and $h^{\chi^{(2)}}_{Z_i}=h^{\chi^{(2)}}_{\fp_2,Z_i}$ 
at all $F$-points, since we chose our affine patch so that they all map to good residue polydiscs. The same algorithms allow us to compute $e_{Z_1}(x)$ and $e_{Z_2}(x)$ for all known $F$-points $x$, and we find that these suffice to generate $T\otimes_{K\otimes\Q_p} T$. This allows us to write the height pairings $h^{\chi^{(1)}}$ and $h^{\chi^{(2)}}$ in terms of the corresponding dual basis of  $(T\otimes_{K\otimes\Q_p} T)^*$ and to extend $h^{\chi^{(1)}}_{Z_i}$ and $h^{\chi^{(2)}}_{Z_i}$ to Coleman functions on good residue polydiscs as discussed in~\S\ref{subsec:solving}. 

Using this, we find expansions of   the functions 
$$\rho_{i,j} \coloneqq 
h^{\chi^{(j)}}_{Z_i} - h^{\chi^{(j)}}_{\fp_j,Z_i}\,,\quad i,j \in \{1,2\} 
$$
on all good residue polydiscs. They
are non-constant and therefore yield four quadratic Chabauty pairs $(\rho_{ij},
\{0\})$. 
We then solve for the common zeroes of $\rho_{1,1}$ and $\rho_{1,2}$ using
the method outlined in~\S\ref{subsec:Hensel}. There are finitely many common
zeroes, and they all satisfy the
multivariate Hensel condition discussed in~\S\ref{subsec:prechensel}. We
then verify that the functions $\rho_{2,1}$ and
$\rho_{2,2}$ also vanish at these solutions.
Hence, by the discussion in~\S\ref{subsec:prechensel}, we provably find
all common zeroes of the $\rho_{i,j}$ (to precision $13^5$). These coincide with the images of the known $F$-points listed in~\eqref{listpts}.

\subsection{Finishing the proofs of Theorems~\ref{thm:XH'F}, \ref{thm:ns27F} and \ref{thm:ns27}}\label{S:proofs}
Using the results of Section~\ref{Section : QC for modular curves}, the computations described in the present section combine to prove Theorem~\ref{thm:XH'F}.

If one can compute the degree $3$ map \[X_{\ns}^+(27)_F\to X_H,\]
which is induced by an inclusion of subgroups $N_{\ns}(27)\cap D\subseteq H$, 
one can obtain the set $X_{\ns}^+(27)(F)$ and hence the set $X_{\ns}^+(27)(\Q)$ by pulling back the set $X(F)$, which by Theorem \ref{thm:XH'F} is equal to the points listed in~\eqref{listpts}.

Jeremy Rouse provided us with equations of the canonical model of $X_{\ns}^+(27)$ in $\P^{11}$ along with explicit formulas for rational maps representing quotient morphisms
\begin{align*}
   f_1\colon X_{\ns}^+(27)_F&\to X, \\
f_2\colon X_{\ns}^+(27) &\to X_{\ns}^+(9),\\
f_3\colon X &\to X_{\ns}^+(9)_F\,.
\end{align*} 

  The rational maps representing $f_2$ and $f_3$ are defined everywhere, while the map representing $f_1$ is not defined on one of the $F$-points of $X_{\ns}^+(27)$. Thus, we compute $X_{\ns}^+(27)(F)$ by identifying the $F$-points of $f_2^{-1}(f_3(X(F)))$, instead of identifying the $F$-points of $f_1^{-1}(X(F))$.

We find that the curve $X_{\ns}^+(27)$ has exactly eight $\Q$-points, and these correspond to the $8$ CM discriminants $-4,-7,-16,-19,-28,-43,-67,-163$. This proves Theorem~\ref{thm:ns27}.

In addition, we find two points in~$X_{\ns}^+(27)(F)\setminus X_{\ns}^+(27)(\Q)$. These also correspond to CM elliptic curves with CM discriminant $-4$. This proves Theorem~\ref{thm:ns27F}.

\appendix

\section{\for{toc}{Comparing definitions of modular curves}\except{toc}{Comparing definitions of modular curves\texorpdfstring{\\}{ }\textup{by L.~Alexander Betts}}}\label{appx:definitions}

In the body of this paper, we rely on works using two perspectives on modular curves: the paper of Rouse, Sutherland and Zureick-Brown \cite{rszbadic} defines modular curves as solutions to a particular moduli problem, while Shimura's book \cite{shimura1971introduction} defines modular curves as quotients of the upper half plane~$\H$, with an explicit algebraic structure. The purpose of this appendix is to compare these two definitions of the modular curve attached to an open subgroup~$G\subseteq\GL_2(\hat\Z)$, so that we can use both~\cite{rszbadic} and~\cite{shimura1971introduction} simultaneously. Ultimately, we will find that the definitions are not actually the same, but differ by transposing the subgroup~$G$ (which is not a problem for us).

Throughout this appendix, we will pay particular attention to the fields of definition of modular curves. That is, the modular curve~$X_G$ is initially defined as a $\Q$-scheme, but as we will explain (following \cite[Lemma~A.2]{rszbadic}), it actually has the structure of a curve over a number field~$F_G$ depending on~$G$. When we compare the moduli-theoretic and holomorphic definitions of~$X_G$, we will take care to track the base fields throughout and any necessary twists which may occur.

\subsection{The moduli-theoretic definition}

First, we recall the moduli-theoretic definition of modular curves, following the conventions of~\cite{rszbadic}. If~$E$ is an elliptic curve over a base $\Q$-algebra~$R$, then by a \emph{full level~$N$ structure} on~$E$, we mean a pair of elements~$P,Q\in E[N](R)$ which form a $\Z/N\Z$-basis of~$E[N]$ at every geometric point of~$\Spec(R)$. Equivalently, we can think of a full level~$N$ structure as being an isomorphism
\[
\iota\colon E[N] \xrightarrow\sim (\Z/N\Z)^{\oplus2})_R
\]
of \'etale group-schemes over~$\Spec(R)$, namely the isomorphism determined by~$\iota(P)=e_1$ and~$\iota(Q)=e_2$ with~$(e_1,e_2)$ the standard basis of~$(\Z/N\Z)^{\oplus2}$.

If~$N\geq3$, then the moduli problem classifying elliptic curves with full level~$N$ structure has a fine moduli space over~$\Q$, which we denote by~$Y(N)$. The moduli space~$Y(N)$ is a one-dimensional smooth reduced affine scheme of finite type over~$\Q$ and, as we will see, is geometrically disconnected over~$\Q$. We refer to~$Y(N)$ as the \emph{full affine modular curve of level~$N$} and write~$X(N)$ for its smooth compactification.

As defined, $Y(N)$ is a geometrically disconnected curve over~$\Q$, but it is often useful to instead view it as a curve over a different number field. For this, recall that if~$F$ is a field, then a scheme defined over~$F$ is the same thing as an abstract scheme~$X$ together with a scheme morphism
\[
X \to \Spec(F) \,.
\]
(See, for example, \cite[\S7.3.7]{vakil:rising_sea}.) Morphisms defined over~$F$ are then the same thing as scheme morphisms compatible with the structure maps to~$F$. From this perspective, one can see that~$Y(N)$ admits the structure of a smooth reduced curve over the number field~$\Q(\zeta_N)\coloneqq \Q[T]/(\Phi_N(T))$. (The element $\zeta_N\coloneqq T$ mod~$\Phi_N(T)$ is thus a primitive $N$th root of unity in~$\Q(\zeta_N)$.) Let~$E_N$ be the universal elliptic curve over~$Y(N)$, which comes with two sections~$P_N,Q_N$ of the projection $E_N\to Y(N)$ giving a trivialisation of the $N$-torsion subscheme~$E_N[N]$. The mod~$N$ Weil pairing of~$P_N$ and~$Q_N$ defines a primitive $N$th root of unity in~$\cO(Y(N))$ (meaning a root of the $N$th cyclotomic polynomial~$\Phi_N$), and hence a morphism
\[
Y(N) \to \Spec(\Q[T]/(\Phi_N(T))) = \Spec(\Q(\zeta_N))
\]
of $\Q$-schemes. This morphism endows~$Y(N)$ with the structure of a scheme over~$\Q(\zeta_N)$. Inside $\cO(Y(N))$, the identity $e_N(P_N,Q_N)=\zeta_N$ holds, and this even defines the $\Q(\zeta_N)$-algebra structure on~$\cO(Y(N))$.

Viewed as a $\Q(\zeta_N)$-scheme, $Y(N)$ admits the following moduli interpretation.

\begin{lemma}\label{lem:new_moduli_description}
    $Y(N)$, viewed as a $\Q(\zeta_N)$-scheme, is the moduli space of elliptic curves~$E$ over $\Q(\zeta_N)$ equipped with a basis~$(P,Q)$ of~$E[N]$ such that~$e_N(P,Q)=\zeta_N$.
\end{lemma}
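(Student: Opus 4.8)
The plan is to verify the claimed moduli description at the level of functors of points. Recall that, as a $\Q$-scheme, $Y(N)$ is a fine moduli space (this uses $N\geq 3$): for every $\Q$-algebra $R$ it represents the set of isomorphism classes of pairs $(E,(P,Q))$ with $E/R$ an elliptic curve and $(P,Q)$ a full level~$N$ structure on~$E$, the universal such pair being $(E_N,(P_N,Q_N))$ over $Y(N)$. By Yoneda, it therefore suffices to construct, naturally in the $\Q(\zeta_N)$-algebra $R$, a bijection between $\Hom_{\Q(\zeta_N)}(\Spec R, Y(N))$ and the set of isomorphism classes of pairs $(E,(P,Q))$ over $R$ with $e_N(P,Q)$ equal to the image $\zeta$ of $\zeta_N$ in $R$.

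First I would unwind what a $\Q(\zeta_N)$-algebra and a $\Q(\zeta_N)$-morphism are. Since $\Q(\zeta_N)=\Q[T]/(\Phi_N(T))$, a $\Q(\zeta_N)$-algebra is the same datum as a $\Q$-algebra $R$ together with a root $\zeta\in R$ of the $N$th cyclotomic polynomial $\Phi_N$ — namely $\zeta$ is the image of $\zeta_N$. Dually, a $\Q(\zeta_N)$-morphism $\varphi\colon\Spec R\to Y(N)$ is a morphism of $\Q$-schemes compatible with the two structure maps to $\Spec(\Q(\zeta_N))$. By the construction recalled above, the structure map $Y(N)\to\Spec(\Q(\zeta_N))$ is the one classifying the unit $e_N(P_N,Q_N)\in\cO(Y(N))^{\times}$, which is a root of $\Phi_N$; hence $\varphi$ is a $\Q(\zeta_N)$-morphism precisely when $\varphi^{*}(e_N(P_N,Q_N))=\zeta$ in $R$.

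The remaining point is the base-change compatibility of the Weil pairing: if $\varphi$ corresponds, under the moduli interpretation of $Y(N)/\Q$, to a pair $(E,(P,Q))$ over $R$ — so that $E=\varphi^{*}E_N$, $P=\varphi^{*}P_N$, $Q=\varphi^{*}Q_N$ — then $\varphi^{*}(e_N(P_N,Q_N))=e_N(P,Q)$. Combining this with the previous step, the $\Q(\zeta_N)$-morphisms $\Spec R\to Y(N)$ are exactly the $\Q$-morphisms corresponding to pairs $(E,(P,Q))$ with $e_N(P,Q)=\zeta$; since the notion of isomorphism of such pairs over $R$ does not depend on whether we regard $R$ over $\Q$ or over $\Q(\zeta_N)$, this gives the desired bijection, naturally in $R$, which is the assertion of the lemma.

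I do not expect a genuine obstacle here. The content is essentially bookkeeping: making precise what it means to "view a $\Q$-scheme as a scheme over a number field" in terms of structure morphisms, together with the standard fact that the Weil pairing commutes with base change. The only point requiring mild care is to carry out all of these identifications functorially in $R$, so that Yoneda applies cleanly and the statement is really about the moduli functor on $\Q(\zeta_N)$-algebras rather than a statement over a single field.
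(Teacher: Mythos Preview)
Your proposal is correct and follows essentially the same route as the paper: both start from the fine moduli interpretation of $Y(N)$ over $\Q$, characterise when a $\Q$-morphism $\Spec(R)\to Y(N)$ is a $\Q(\zeta_N)$-morphism via compatibility of structure maps, and then use $e_N(P_N,Q_N)=\zeta_N$ together with base-change compatibility of the Weil pairing to identify this condition with $e_N(P,Q)=\zeta_N$. The only cosmetic difference is that you frame things through Yoneda and naturality in $R$, whereas the paper phrases it directly as the universal property at a single $R$.
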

\begin{proof}
    The assertion to be proved is that for any $\Q(\zeta_N)$-algebra~$R$, any elliptic curve~$E$ over~$R$ and any basis~$(P,Q)$ of~$E[N]$ such that~$e_N(P,Q)=\zeta_N$, there is a unique morphism $f\colon\Spec(R)\to Y(N)$ of $\Q(\zeta_N)$-schemes such that~$(E,P,Q)$ is the pullback of the universal elliptic curve with $N$-level structure~$(E_N,P_N,Q_N)$ over~$Y(N)$. The moduli interpretation for~$Y(N)$ as a $\Q$-scheme provides a morphism~$f\colon \Spec(R)\to Y(N)$ such that~$(E,P,Q)$ is the pullback of~$(E_N,P_N,Q_N)$, but~$f$ is initially only a morphism of~$\Q$-schemes.
    
    The map~$f$ is a morphism of~$\Q(\zeta_N)$-schemes if and only if it is compatible with the structure maps to~$\Spec(\Q(\zeta_N))$, i.e.~if and only if the composition
    \[
    \Spec(R) \xrightarrow{f} Y(N) \to \Spec(\Q(\zeta_N))
    \]
    is equal to the structure map $\Spec(R)\to\Spec(\Q(\zeta_N))$ coming from the fact that~$R$ is a $\Q(\zeta_N)$-algebra. This composition sends the element~$\zeta_N\in\Spec(\Q(\zeta_N))$ to $f^*e_N(P_N,Q_N)=e_N(P,Q)$, and so~$f$ is a morphism of $\Q(\zeta_N)$-schemes if and only if the trivialisation~$(P,Q)$ satisfies~$e_N(P,Q)=\zeta_N$. The result follows.
\end{proof}

One can also define modular curves associated to arbitrary open subgroups of~$\GL_2(\hat\Z)$. There is a natural left action of the finite group~$\GL_2(\Z/N\Z)$ on~$Y(N)$ given by
\[
\gamma\cdot(E,\iota) \coloneqq (E,\gamma\circ\iota)
\]
for~$\gamma\in\GL_2(\Z/N\Z)=\Aut((\Z/N\Z)^{\oplus2})$, and~$\iota\in\operatorname{Iso}(E[N],(\Z/N\Z)^{\oplus2})$. In terms of bases~$(P,Q)$ of~$E[N]$, this action is given by
\begin{equation}\label{eq:left_action}
    \begin{pmatrix}a&b\\c&d\end{pmatrix}\cdot(E,P,Q) \coloneqq (E,a'P+c'Q,b'P+d'Q) \,,
\end{equation}
where
\begin{equation}\label{eq:inverse_matrix}
    \begin{pmatrix}a'&b'\\c'&d'\end{pmatrix} = \frac1{ad-bc}\begin{pmatrix}d&-b\\-c&a\end{pmatrix} = \begin{pmatrix}a&b\\c&d\end{pmatrix}^{-1} \,.
\end{equation}
This action is an action by automorphisms of~$\Q$-schemes; on the level of~$\Q(\zeta_N)$-schemes, it is a semilinear action with respect to the action on~$\Q(\zeta_N)$ given by $\gamma\cdot\zeta_N=\zeta_N^{\det(\gamma)^{-1}}$ (this follows from the identity $\zeta_N=e_N(P_N,Q_N)$). If~$G$ is an open subgroup of~$\GL_2(\hat\Z)$ of level~$N\geq3$, we define the coarse modular curve of level~$G$ to be the coarse quotient $X_G=X(N)/(\text{$G$ mod $N$})$. The structure map $X(N)\to\Spec(\Q(\zeta_N))$ induces, on quotienting by~$G$ mod~$N$, a morphism $X_G \to \Spec(F_G)$, where~$F_G\subseteq \Q(\zeta_N)$ is the fixed field of~$\det(G)$ mod~$N$. This makes~$X_G$ into a curve over the number field~$F_G$.

\subsubsection{Functoriality}

The construction of the modular curve~$X_G$ is functorial with respect to the open subgroup~$G\subseteq\GL_2(\hat\Z)$. More precisely, if~$G$ and~$G'$ are open subgroups and if~$g\in\GL_2(\hat\Z)$ is an element such that
\[
gGg^{-1}\subseteq G' \,,
\]
then the level of~$G'$ divides the level~$N$ of~$G$, and the automorphism of~$X(N)$ given by the action of~$g$ factors through a morphism
\[
X_G \to X_{G'}
\]
of~$\Q$-schemes. This morphism fits into a commuting square
\begin{center}
\begin{tikzcd}
    X_G \arrow[r]\arrow[d] & X_{G'} \arrow[d] \\
    \Spec(F_G) \arrow[r] & \Spec(F_{G'}) \,,
\end{tikzcd}
\end{center}
where the bottom arrow is induced from the field embedding $F_{G'}\hookrightarrow F_G$ sending~$\zeta_N$ to~$\zeta_N^{\det(g)^{-1}}$.

Two particular cases are of interest. Firstly, if~$g$ is the identity, so~$G\subseteq G'$, then one obtains a morphism of modular curves, and this is a morphism of schemes over~$\Spec(F_{G'})$. Secondly, if~$G'=gGg^{-1}$, then the horizontal morphisms in the above square are all isomorphisms, and we obtain
\begin{cor}\label{cor:base_change}
    Let~$G$ be an open subgroup of~$\GL_2(\hat\Z)$, let~$g\in\GL_2(\hat\Z)$, and set~$G'=gGg^{-1}$. Then~$F_{G'}=F_G$, and~$X_{G'}$ is $F_G$-isomorphic to the base-change of~$X_G$ along the automorphism of~$F_G$ sending~$\zeta_N$ to~$\zeta_N^{\det(g)^{-1}}$.
\end{cor}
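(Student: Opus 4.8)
The plan is to read the corollary off directly from the functoriality discussion immediately above, specialised to the inclusions $gGg^{-1}=G'\subseteq G'$ and its mirror image $g^{-1}G'g=G\subseteq G$.

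\textbf{Step 1: the two base fields agree.} Let $N$ be the level of $G$. Writing $\pi_N\colon\GL_2(\hat\Z)\to\GL_2(\Z/N\Z)$ for reduction and $\bar G=\pi_N(G)$, we have $G=\pi_N^{-1}(\bar G)$ and hence $G'=gGg^{-1}=\pi_N^{-1}\bigl(\pi_N(g)\,\bar G\,\pi_N(g)^{-1}\bigr)$; thus $G'$ has level dividing $N$, and by the symmetric argument (conjugate back by $g^{-1}$) its level is exactly $N$. Since $\hat\Z^\times$ is abelian, $\det(G')=\det(gGg^{-1})=\det(G)$ as subgroups of $(\Z/N\Z)^\times$. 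By construction $F_G$ and $F_{G'}$ are both the fixed field of this subgroup inside $\Q(\zeta_N)$, so $F_{G'}=F_G$. In particular the field embedding $F_{G'}\hookrightarrow F_G$ sending $\zeta_N$ to $\zeta_N^{\det(g)^{-1}}$ that appears in the functoriality square is an automorphism $\tau$ of $F_G$, namely the one named in the statement.

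\textbf{Step 2: the induced morphism is an isomorphism over $\tau$.} Applying the functoriality construction to $gGg^{-1}\subseteq G'$ gives a morphism $\phi\colon X_G\to X_{G'}$ of $\Q$-schemes, and applying it to $g^{-1}G'g\subseteq G$ gives $\phi'\colon X_{G'}\to X_G$. The composites $\phi'\circ\phi$ and $\phi\circ\phi'$ are the morphisms induced by the elements $g^{-1}g=gg^{-1}=1$, which are the respective identity maps of $X_G$ and $X_{G'}$; hence $\phi$ is an isomorphism of $\Q$-schemes. The commuting square attached to $\phi$ reads $b\circ\phi=\Spec(\tau)\circ a$, where $a\colon X_G\to\Spec(F_G)$ and $b\colon X_{G'}\to\Spec(F_G)$ are the structure morphisms. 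This is exactly the assertion that $(X_{G'},b)$, viewed as a scheme over $F_G$, is isomorphic to the base change of $(X_G,a)$ along $\tau$, which is what we wanted.

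There is nothing genuinely hard here beyond bookkeeping; the one point to get right is the direction of the twist. This is governed by the semilinearity of the $\GL_2(\Z/N\Z)$-action on $Y(N)$ — that $\gamma$ acts over $\zeta_N\mapsto\zeta_N^{\det(\gamma)^{-1}}$, coming from the identity $\zeta_N=e_N(P_N,Q_N)$ together with the formulas~\eqref{eq:left_action}--\eqref{eq:inverse_matrix} for how $\gamma$ changes a level structure — and one must check that this semilinearity descends correctly through the coarse quotients defining $X_G$ and $X_{G'}$, so that $\tau$ rather than $\tau^{-1}$ appears in the square. Everything else is formal.
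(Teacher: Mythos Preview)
Your proposal is correct and follows essentially the same approach as the paper. The paper's proof is the one-line observation that when $G'=gGg^{-1}$ the horizontal arrows in the functoriality square are isomorphisms; you spell out the same argument with more care (checking $\det(G')=\det(G)$, building the inverse via $g^{-1}$, and flagging the direction of the twist as the point requiring attention), but the route is identical.
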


If~$g\in\SL_2(\hat\Z)$, then this corollary shows that~$X_{G'}$ and~$X_G$ are $F_G$-isomorphic on the nose, but in general they will only be twists of one another.

\subsection{The holomorphic definition}

There is an alternative construction of modular curves which is much closer to their origins in the theory of modular forms, as described for example in Shimura's book \cite[Chapter~6]{shimura1971introduction}. Let~$\Gamma(N)\subseteq\SL_2(\Z)$ denote the subgroup consisting of matrices congruent to the identity modulo~$N$, which acts on the upper half plane~$\H$ from the left by fractional linear transformations. The quotient~$\Gamma(N)\backslash\H$ is a non-compact Riemann surface. The subfield of the field of meromorphic functions on~$\H$ generated by the functions
\begin{equation}\label{eq:fmn}
f_{(m,n)}(\tau)\coloneqq \frac{g_2(\tau)g_3(\tau)}{\Delta(\tau)}\wp\left(\frac{m\tau+n}{N},\tau\right)
\end{equation}
for~$m,n\in\Z$ is an extension~$\mathcal{F}_N$ of~$\Q$ of transcendence degree~$1$, whose field of constants is~$\Q(e^{2\pi i/N})$ \cite[Theorem~6.6(3)--(4)]{shimura1971introduction}, and thus corresponds to a smooth projective geometrically connected curve~$X(N)^{\Sh}$ over~$\Q(e^{2\pi i/N})$.

The functions~$f_{(m,n)}$ depend only on the values of~$m$ and~$n$ modulo~$N$, and so there is a right action of~$\GL_2(\Z/N\Z)$ on~$\mathcal{F}_N$ given by
\begin{equation}\label{eq:action_on_meromorphic_functions}
f_{(m,n)}\cdot\begin{pmatrix}a&b\\c&d\end{pmatrix} \coloneqq f_{(am+cn,bm+dn)} \,,
\end{equation}
see \cite[Theorem~6.6(2)]{shimura1971introduction}. The restriction of this action to~$\SL_2(\Z/N\Z)$ corresponds to the action of~$\SL_2(\Z/N\Z)$ on~$\Gamma(N)\backslash\H$ by fractional linear transformations, due to the identity
\[
f_{(am+cn,bm+dn)}(\tau) = f_{(m,n)}\left(\frac{a\tau+b}{c\tau+d}\right)
\]
valid for~$\begin{pmatrix}a&b\\c&d\end{pmatrix}\in\SL_2(\Z)$. If~$G$ is an open subgroup of~$\GL_2(\hat\Z)$ of level~$N\geq3$, let~$F_G^{\Sh}\subseteq\Q(e^{2\pi i/N})$ denote the abelian extension of~$\Q$ corresponding to the open subgroup $\det(G)\subseteq\hat{\Z}^\times$. According to \cite[Proposition~6.27]{shimura1971introduction}, the field of constants of the fixed field~$\mathcal{F}_N^{\text{$G$ mod $N$}}$ is~$F_G^{\Sh}$, and so~$\mathcal{F}_N^{\text{$G$ mod $N$}}$ is the fraction field of a geometrically connected curve~$X_G^{\Sh}$ over~$F_G^{\Sh}$. Scheme-theoretically, $X_G^{\Sh}$ is the quotient of~$X(N)^{\Sh}$ by the left action of~$\GL_2(\Z/N\Z)$, and so $X(N)^{\Sh}_{\C}$ is the smooth compactification of the Riemann surface $(G\cap\SL_2(\Z))\backslash\H$, where~$X(N)^{\Sh}_{\C}$ denotes the base-change of~$X(N)^{\Sh}$ along the inclusion $F_G^{\Sh}\hookrightarrow\C$.

This definition of modular curves also satisfies the same functoriality properties, see \cite[\S6.7]{shimura1971introduction}.

\subsection{Comparing the definitions}

We want to carefully relate these two constructions of modular curves. There are two points worth remarking upon. Firstly, the fields of definition~$F_G$ and~$F_G^{\Sh}$ for~$X_G$ and~$X_G^{\Sh}$ are, although isomorphic, not actually equal: $F_G$ is a subfield of~$\Q(\zeta_N)=\Q[T]/(\Phi_N(T))$, which is not a subfield of~$\C$, while~$F_G^{\Sh}$ is a subfield of~$\Q(e^{2\pi i/N})\subset\C$. There are (at least) two plausible ways that these fields could be identified, by identifying~$\zeta_N$ with either~$e^{2\pi i/N}$ or~$e^{-2\pi i/N}$. It will turn out that the former is the correct identification for us, and so from now on we permit ourselves to identify~$F_G=F_G^{\Sh}$ using the identification~$\zeta_N=e^{2\pi i/N}$.

The second point is rather more serious. Once we have identified~$X(N)$ and~$X(N)^{\Sh}$, there is no guarantee that their~$\GL_2(\Z/N\Z)$-actions will have to match up. Indeed, there are two plausible possibilities: either these actions are the same, or they differ by the automorphism of~$\GL_2(\Z/N\Z)$ given by inverse transposition. Unfortunately, it turns out to be the latter which is the case.

\begin{prop}\label{prop:two_perspectives_on_modular_curves}
    For any~$N\geq3$, there is a canonical isomorphism
    \[
    \phi\colon X(N)^{\Sh} \xrightarrow\sim X(N)
    \]
    of curves over~$\Q(\zeta_N)$ which satisfies
    \begin{equation}\label{eq:transpose-equivariance}
    \phi(\gamma\cdot x) = \gamma^{-\intercal}\cdot\phi(x)
    \end{equation}
    for all points~$x$ of~$X(N)^{\Sh}$ and all~$\gamma\in\GL_2(\Z/N\Z)$, where~$(-)^{-\intercal}$ denotes the inverse transpose.
\end{prop}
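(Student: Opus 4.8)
The plan is to construct the isomorphism $\phi$ directly from the moduli interpretation of $X(N)$ (Lemma~\ref{lem:new_moduli_description}), using the fact that a point of $X(N)^{\Sh}$ corresponds to a point $\tau\in\H$ together with a choice of level structure encoded by the functions $f_{(m,n)}$, and that this data amounts to an elliptic curve $E_\tau=\C/(\Z\tau+\Z)$ together with the basis $(\tau/N, 1/N)$ of $E_\tau[N]$.

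\begin{proof}[Proof sketch]
First I would recall that, over $\C$, the Riemann surface $\Gamma(N)\backslash\H$ parametrises pairs $(E,(P,Q))$ consisting of a complex elliptic curve with an ordered basis of $E[N]$, where $\tau\in\H$ corresponds to the curve $E_\tau=\C/(\Z\tau+\Z)$ with basis $(P_\tau,Q_\tau)=(\tau/N,1/N)$. Under this correspondence the function $f_{(m,n)}$ of~\eqref{eq:fmn} is, up to the normalising factor $g_2g_3/\Delta$, the value of the Weierstrass $\wp$-function at the point $\frac{m\tau+n}{N}=mP_\tau+nQ_\tau$; thus $f_{(m,n)}$ records (a coordinate of) the torsion point $mP_\tau+nQ_\tau$. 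The key computation of the Weil pairing on $E_\tau$ gives $e_N(P_\tau,Q_\tau)=e^{2\pi i/N}$ with the standard sign conventions (see \cite[\S6.3]{shimura1971introduction}); with our identification $\zeta_N=e^{2\pi i/N}$, this matches the constraint $e_N(P,Q)=\zeta_N$ appearing in Lemma~\ref{lem:new_moduli_description}. Hence the universal object over $X(N)^{\Sh}_\C$ is a complex elliptic curve with a basis of $N$-torsion whose Weil pairing is $\zeta_N$, and by the moduli description of $X(N)$ as a $\Q(\zeta_N)$-scheme this produces a canonical morphism $\phi_\C\colon X(N)^{\Sh}_\C\to X(N)_\C$. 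That $\phi_\C$ descends to a morphism of curves over $\Q(\zeta_N)$, and is an isomorphism, follows because both sides are smooth projective geometrically connected curves of the same genus (or one invokes that the universal family is actually defined over $\Q(\zeta_N)$ by \cite[Theorem~6.6]{shimura1971introduction}) and $\phi_\C$ is visibly a bijection on the level of moduli; I would phrase this descent step carefully, perhaps via the field-of-functions description, since $X(N)^{\Sh}$ was defined through its function field $\mathcal F_N$.

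The main point — and the one I expect to be the genuine obstacle — is the equivariance property~\eqref{eq:transpose-equivariance}. The action of $\gamma=\begin{pmatrix}a&b\\c&d\end{pmatrix}\in\GL_2(\Z/N\Z)$ on $\mathcal F_N$ in~\eqref{eq:action_on_meromorphic_functions} sends $f_{(m,n)}$ to $f_{(am+cn,\,bm+dn)}$; in terms of the torsion point it is being recorded, this replaces the point $mP_\tau+nQ_\tau$ by $(am+cn)P_\tau+(bm+dn)Q_\tau$, i.e.\ it acts on the \emph{column vector} $(m,n)^\intercal$ of coefficients by left multiplication by $\gamma^\intercal$. Equivalently, thinking of $(P_\tau,Q_\tau)$ as the image under an isomorphism $\iota\colon E_\tau[N]\to(\Z/N\Z)^{\oplus2}$ of the standard basis, the Shimura action post-composes $\iota$ with $\gamma^\intercal$ (contravariantly in the functions), whereas the $\GL_2$-action on $X(N)$ in~\eqref{eq:left_action}--\eqref{eq:inverse_matrix} is $\iota\mapsto\gamma\circ\iota$. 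Tracing through, one finds $\phi_\C(\gamma\cdot x)=(\gamma^\intercal)^{-1}\cdot\phi_\C(x)=\gamma^{-\intercal}\cdot\phi_\C(x)$: the transpose comes from ``coefficients of a point'' being dual to ``the basis vectors themselves'', and the inverse comes from matching the contravariance in~\eqref{eq:action_on_meromorphic_functions} against the covariant description~\eqref{eq:left_action} after accounting for the $(-)^{-1}$ in~\eqref{eq:inverse_matrix}. Finally I would note that $\phi=\phi_\C$ is already defined over $\Q(\zeta_N)$ by the descent argument above, so~\eqref{eq:transpose-equivariance} holds over $\Q(\zeta_N)$ and hence for all points $x$ of $X(N)^{\Sh}$. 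The delicate part throughout is bookkeeping the sign and inverse-transpose: getting the Weil pairing normalisation right (which forces the identification $\zeta_N=e^{2\pi i/N}$ rather than $e^{-2\pi i/N}$) and correctly dualising the left $\GL_2(\Z/N\Z)$-actions so that transposition, rather than the identity, appears.
\end{proof}
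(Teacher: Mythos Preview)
Your approach is essentially the same as the paper's: construct $\phi$ over $\C$ from the universal elliptic curve $E_\tau=\C/(\Z\tau+\Z)$ with the basis $(\tau/N,1/N)$, check the Weil pairing gives $e^{2\pi i/N}$, descend to $\Q(\zeta_N)$, and then compare the two $\GL_2(\Z/N\Z)$-actions on the generating functions to obtain the inverse-transpose twist.

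Two places where the paper is more careful than your sketch. First, the Weil pairing identity $e_N(P_\tau,Q_\tau)=e^{2\pi i/N}$ is not simply cited but proved directly (Lemma~\ref{lem:weil_vs_intersection}) via an explicit computation with the Weierstrass $\sigma$-function and Legendre's relation; this is where the choice $\zeta_N=e^{2\pi i/N}$ (rather than $e^{-2\pi i/N}$) is pinned down. Second, your descent argument as written has a genuine gap: ``same genus'' does not force a $\C$-isomorphism to descend. The paper executes exactly the field-of-functions idea you gesture at, by constructing \emph{algebraic} functions $h_{(m,n)}\in\cO(Y(N))$ over $\Q(\zeta_N)$ (built from $\frac{c_2c_3}{\Delta}\cdot x(mP_N+nQ_N)$ on the universal curve) and checking $\phi^*h_{(m,n)}=f_{(m,n)}$; this exhibits $\mathcal F_N$ inside the function field of $Y(N)$ and forces $\phi^{-1}$, hence $\phi$, to be defined over $\Q(\zeta_N)$. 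The equivariance is then read off by computing $h_{(m,n)}\cdot\gamma=h_{(a'm+b'n,\,c'm+d'n)}$ with $(a',b',c',d')$ the entries of $\gamma^{-1}$, and comparing with~\eqref{eq:action_on_meromorphic_functions}.
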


\begin{cor}
    Let~$G\subseteq\GL_2(\hat\Z)$ be an open subgroup, and let~$G^{\intercal}$ denote the transposed subgroup. Then $X_G^{\Sh}$ is canonically $F_G$-isomorphic to~$X_{G^{\intercal}}$.
\end{cor}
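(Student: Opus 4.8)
The plan is to deduce this corollary directly from Proposition~\ref{prop:two_perspectives_on_modular_curves} by passing to quotients. Recall that by definition $X_G^{\Sh}$ is the quotient of $X(N)^{\Sh}$ by the left $\GL_2(\Z/N\Z)$-action, restricted along the subgroup $G$ mod $N$; more precisely it is the coarse quotient of $X(N)^{\Sh}$ by $(G\bmod N)$, and similarly $X_{G^{\intercal}}$ is the coarse quotient of $X(N)$ by $(G^{\intercal}\bmod N)$. So the first step is simply to observe that $\phi$ carries the $(G\bmod N)$-action on $X(N)^{\Sh}$ to the $(G^{-\intercal}\bmod N)$-action on $X(N)$, by the transpose-equivariance~\eqref{eq:transpose-equivariance}. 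Since $G^{-\intercal}=(G^{\intercal})^{-1}$ and a subgroup equals its own inverse, the image subgroup is exactly $G^{\intercal}\bmod N$.

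Next I would invoke the universal property of coarse quotients: a $G$-equivariant isomorphism between two $G$-schemes (where the $G$-action on the target is transported along the group automorphism) descends to an isomorphism of the coarse quotients. Applying this to $\phi$ and the pair of groups $(G\bmod N)$ acting on $X(N)^{\Sh}$ and $(G^{\intercal}\bmod N)$ acting on $X(N)$ yields a canonical isomorphism $X_G^{\Sh}\xrightarrow{\sim}X_{G^{\intercal}}$ of $\Q$-schemes. Here one should be slightly careful to note that $\phi$ is an isomorphism of curves over $\Q(\zeta_N)$, but what descends to the quotients is \emph{a priori} only an isomorphism of $\Q$-schemes; I will upgrade this below.

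It then remains to check that this descended isomorphism is defined over the common base field $F_G=F_{G^{\intercal}}$. That these fields coincide is immediate: $F_G\subseteq\Q(\zeta_N)$ is the fixed field of $\det(G)\bmod N$, and transposition preserves determinants, so $\det(G^{\intercal})=\det(G)$ and hence $F_{G^{\intercal}}=F_G$ (using our fixed identification $\zeta_N=e^{2\pi i/N}$, so that $F_G$ and $F_G^{\Sh}$ are literally the same subfield of $\Q(\zeta_N)$). For the field of definition of the morphism: the structure maps $X_G^{\Sh}\to\Spec(F_G)$ and $X_{G^{\intercal}}\to\Spec(F_G)$ are each induced by the structure map $X(N)^{\Sh}\to\Spec(\Q(\zeta_N))$ (respectively $X(N)\to\Spec(\Q(\zeta_N))$) on passing to quotients, and $\phi$ is compatible with these structure maps since it is a $\Q(\zeta_N)$-morphism. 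Hence the descended isomorphism commutes with the structure maps to $\Spec(F_G)$, i.e.\ it is an isomorphism of $F_G$-schemes.

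The main obstacle is the bookkeeping around the quotient construction: one must be sure that the coarse quotient $X(N)\to X_G$ really is a categorical quotient in a strong enough sense that an equivariant isomorphism descends, and that ``descending $\phi$'' interacts correctly with the base fields (since $\phi$ lives over $\Q(\zeta_N)$ but the quotients are curves over the smaller field $F_G$). Everything else — the determinant computation giving $F_G=F_{G^{\intercal}}$, and the identification of the image subgroup $G^{-\intercal}\bmod N$ with $G^{\intercal}\bmod N$ — is routine. There is no difficulty in the ``canonical'' claim: the isomorphism is canonical because $\phi$ is, and the quotient maps are.
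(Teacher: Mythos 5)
Your proof is correct and is the paper's (implicit) argument: the corollary is stated without proof as an immediate consequence of Proposition~\ref{prop:two_perspectives_on_modular_curves}, and your write-up is precisely the deduction the authors intend — pass the equivariance~\eqref{eq:transpose-equivariance} to the coarse quotients, use that $G^{-\intercal}$ and $G^{\intercal}$ coincide as subgroups, note $\det(G^\intercal)=\det(G)$ to get $F_{G^\intercal}=F_G$, and observe that compatibility of $\phi$ with the structure maps to $\Spec(\Q(\zeta_N))$ descends to compatibility over $\Spec(F_G)$.
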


Because of the subtleties in the issues of the base field and $\GL_2(\Z/N\Z)$-equivariance, we give a careful proof. Let~$\cO(\H)$ denote the ring of meromorphic functions on the upper half plane~$\H$, and let~$E_{\H}$ denote the (algebraic) elliptic curve over~$\cO(\H)$ defined by the Weierstrass equation
\[
y^2 = 4x^3 - g_2(\tau)x - g_3(\tau) \,.
\]
This elliptic curve comes with a trivialisation of its $N$-torsion, given by
\[
P_{\H} = \left(\wp\left(\frac{\tau}N,\tau\right),\wp'\left(\frac{\tau}N,\tau\right)\right) \quad\text{and}\quad Q_{\H} = \left(\wp\left(\frac1N,\tau\right),\wp'\left(\frac1N,\tau\right)\right) \,.
\]
If~$\tau\in\H$ is a point in the upper half plane, we denote by~$(E_\tau,P_\tau,Q_\tau)$ the specialisation of~$(E_{\H},P_{\H},Q_{\H})$ at~$\tau$, i.e.~the base-change along the evaluate-at-$\tau$ map $\cO(\H)\to\C$. So~$(E_\tau,P_\tau,Q_\tau)$ are defined by the same formulae as~$(E_{\H},P_{\H},Q_{\H})$, except that now~$\tau$ denotes a specific complex number rather than the standard coordinate on~$\H$. The uniformisation of complex elliptic curves ensures that~$E_\tau^{\an}$ is biholomorphic to $\C/(\Z\oplus\Z\cdot\tau)$, and that~$P_\tau$ and~$Q_\tau$ are the images of~$\frac{\tau}{N}\in\C$ and~$\frac1N\in\C$, respectively.

We use this to compute the Weil pairing of~$P_{\H}$ and~$Q_{\H}$.

\begin{lemma}\label{lem:weil_vs_intersection}
    We have $e_N(P_{\H},Q_{\H})=e^{2\pi i/N}$.
\end{lemma}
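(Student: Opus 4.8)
The plan is to compute the Weil pairing $e_N(P_\H,Q_\H)$ by specialising at a single point $\tau\in\H$ and using the complex-analytic description of the Weil pairing on the torus $\C/(\Z\oplus\Z\tau)$. Since the Weil pairing is a regular function on the base (valued in the constant group scheme $\mu_N$), and $Y(N)^{\Sh}$ is geometrically connected, the value $e_N(P_\H,Q_\H)\in\cO(\H)^\times$ is in fact a constant $N$th root of unity, so it suffices to evaluate it at any one convenient $\tau$ (or just to observe that the answer is independent of $\tau$ and read it off the uniformisation).

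The key steps, in order: first, recall the standard formula for the Weil pairing on a complex elliptic curve $\C/\Lambda$ with $\Lambda=\Z\omega_1\oplus\Z\omega_2$: if $\mathrm{Im}(\omega_1/\omega_2)>0$, then the $N$-torsion points $\omega_1/N$ and $\omega_2/N$ satisfy $e_N(\omega_1/N,\omega_2/N)=e^{2\pi i/N}$ (with the sign/orientation conventions matching those of \cite{shimura1971introduction}; one should cite the precise statement, e.g.\ the computation of the Weil pairing in terms of the alternating form on the lattice). Second, apply the uniformisation recalled just above the lemma: $E_\tau^{\an}\cong\C/(\Z\oplus\Z\tau)$ with $P_\tau\leftrightarrow\tau/N$ and $Q_\tau\leftrightarrow 1/N$. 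So here $\omega_1=\tau$, $\omega_2=1$, and $\mathrm{Im}(\omega_1/\omega_2)=\mathrm{Im}(\tau)>0$ since $\tau\in\H$; hence $e_N(P_\tau,Q_\tau)=e^{2\pi i/N}$. Third, since this holds for every $\tau\in\H$ and the Weil pairing $e_N(P_\H,Q_\H)$ is the corresponding element of $\cO(\H)^\times$ whose specialisation at each $\tau$ is $e_N(P_\tau,Q_\tau)$, we conclude $e_N(P_\H,Q_\H)=e^{2\pi i/N}$ as an element of $\cO(\H)$.

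The only real subtlety — and the step I would be most careful about — is pinning down the orientation convention in the Weil pairing so that the answer is $e^{2\pi i/N}$ rather than $e^{-2\pi i/N}$; this is exactly the dichotomy flagged in the surrounding text, and it is what forces the identification $\zeta_N=e^{2\pi i/N}$ later. Concretely, one must check that the normalisation of $e_N$ used here (coming from the moduli-theoretic $X(N)$ and ultimately from a chosen identification $\mu_N\cong\Z/N\Z$ or from the $e_N$-pairing as defined in \cite{rszbadic}) agrees with the analytic formula $e_N(z_1,z_2)=\exp(2\pi i(a_2b_1-a_1b_2))$ (or its inverse) for $z_j=a_j\omega_1+b_j\omega_2$ with the ordered basis $(\omega_1,\omega_2)=(\tau,1)$. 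I would verify this by comparing against the well-known compatibility of the Weil pairing with the cup product on $H^1$ of the torus, oriented so that $(\tau,1)$ (equivalently $(d x,d y)$ in real coordinates) is a positively oriented basis when $\mathrm{Im}(\tau)>0$; this fixes the sign and yields $e_N(P_\H,Q_\H)=e^{2\pi i/N}$, completing the proof.

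\begin{proof}
The Weil pairing $e_N(P_\H,Q_\H)$ is a unit in $\cO(\H)$ which is a root of the $N$th cyclotomic polynomial, and whose specialisation at each $\tau\in\H$ equals $e_N(P_\tau,Q_\tau)$. By the uniformisation recalled above, $E_\tau^{\an}\cong\C/(\Z\oplus\Z\tau)$ with $P_\tau$ and $Q_\tau$ corresponding to $\tfrac{\tau}{N}$ and $\tfrac1N$, respectively. Writing $z_1=a_1\tau+b_1$ and $z_2=a_2\tau+b_2$ for the two torsion points (so $(a_1,b_1)=(\tfrac1N,0)$ and $(a_2,b_2)=(0,\tfrac1N)$), the analytic formula for the Weil pairing with respect to the positively oriented basis $(\tau,1)$ of the lattice (positively oriented since $\mathrm{Im}(\tau)>0$) gives
\[
e_N(P_\tau,Q_\tau) = \exp\!\left(2\pi i\,(a_1 b_2 - a_2 b_1)\right) = \exp\!\left(2\pi i/N\right) \,.
\]
Since this value is independent of~$\tau$, the corresponding element of~$\cO(\H)$ is the constant $e^{2\pi i/N}$, i.e.~$e_N(P_\H,Q_\H)=e^{2\pi i/N}$.
\end{proof}
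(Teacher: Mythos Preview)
Your reduction to a fixed $\tau$ is fine, but the proof has a gap at exactly the point you yourself flagged as the only real subtlety. You invoke an ``analytic formula'' for $e_N$ without proving it or citing a reference whose sign convention is shown to match the Weil pairing actually in use (Silverman's, \cite[\S III.8]{silverman2009arithmetic}). That formula with a \emph{specific} sign is the entire content of the lemma; the remark immediately following the lemma in the paper derives the relation $e_N(P,Q)=e^{-2\pi i\,P\cdot Q/N}$ between the Weil and intersection pairings as a \emph{consequence} of this computation, not an input to it. Your justification for the sign also contains an error: the ordered basis $(\tau,1)$ of the lattice $\Z\tau\oplus\Z\subset\C$ is \emph{negatively} oriented when $\Im(\tau)>0$ (its determinant relative to $(1,i)$ is $-\Im(\tau)<0$), consistent with the paper's observation that $P_\tau\cdot Q_\tau=-1$. (There is also a missing factor of $N$ in your displayed formula: with $(a_1,b_1)=(\tfrac1N,0)$ and $(a_2,b_2)=(0,\tfrac1N)$ one has $a_1b_2-a_2b_1=1/N^2$, not $1/N$.)

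The paper avoids this circularity by computing $e_N(P_\tau,Q_\tau)$ directly from the definition. It writes down an explicit meromorphic function $g$ on $E_\tau$ with divisor $[N]^*Q_\tau-E_\tau[N]$ in terms of the Weierstrass $\sigma$-function, evaluates the ratio $g(z+\tfrac{\tau}{N})/g(z)$ via the quasi-periodicity law $\sigma(z+\omega_j)/\sigma(z)=-e^{2\eta_j(z+\omega_j/2)}$, and arrives at $e^{2(\eta_1\tau-\eta_2)/N}=e^{2\pi i/N}$ by Legendre's relation $\eta_1\tau-\eta_2=\pi i$. This from-first-principles computation is what actually pins down the sign; to salvage your approach you would need either to reproduce such a computation or to cite a source that proves the analytic formula with Silverman's normalisation of $e_N$.
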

\begin{proof}
    It suffices to show that~$e_N(P_\tau,Q_\tau)=e^{2\pi i/N}$ for each~$\tau\in\H$. Let~$\sigma(z)$ denote the Weierstrass sigma function relative to~$\tau$ \cite[Section~VI.3]{silverman2009arithmetic}. It is a holomorphic function on~$\C$ with simple zeroes on~$\Z\oplus\Z\tau$. According to the recipe in \cite[Proposition~VI.3.4]{silverman2009arithmetic}, the function
    \[
    g(z) \coloneqq \frac{\sigma(z)}{\sigma(z-1)}\cdot\prod_{\substack{0\leq m<N\\0\leq n<N}}\left(\frac{\sigma(z-\frac{1}{N^2}+\frac{m\tau+n}{N})}{\sigma(z+\frac{m\tau+n}{N})}\right)
    \]
    is doubly periodic, coming from a meromorphic function on~$E_\tau$ with divisor~$[N]^*Q-E[N]$. So, according to the definition in \cite[Section~III.8]{silverman2009arithmetic}, the Weil pairing of~$P_\tau$ and~$Q_\tau$ is given by
    \begin{align*}
        e_N(P_\tau,Q_\tau) &= \frac{g(z+\frac{\tau}{N})}{g(z)} \\
         &= \frac{\sigma(z+\frac{\tau}{N})\sigma(z-1)}{\sigma(z+\frac{\tau}{N}-1)\sigma(z)}\cdot\prod_{{\substack{0\leq m<N\\0\leq n<N}}}\left(\frac{\sigma(z-\frac{1}{N^2}+\frac{(m+1)\tau+n}{N})\sigma(z+\frac{m\tau+n}{N})}{\sigma(z+\frac{(m+1)\tau+n}{N})\sigma(z-\frac{1}{N^2}+\frac{m\tau+n}{N})}\right) \\
         &= \frac{\sigma(z+\frac{\tau}{N})\sigma(z-1)}{\sigma(z+\frac{\tau}{N}-1)\sigma(z)}\cdot\prod_{0\leq n<N}\left(\frac{\sigma(z-\frac{1}{N^2}+\tau+\frac{n}{N})\sigma(z+\frac{n}{N})}{\sigma(z+\tau+\frac{n}{N})\sigma(z-\frac{1}{N^2}+\frac{n}{N})}\right) \\
         &= e^{2\eta_1(z+\frac{\tau}{N}-\frac12)}\cdot e^{-2\eta_1(z-\frac12)}\cdot\prod_{0\leq n<N}\left(e^{2\eta_2(z-\frac{1}{N^2}+\frac{n}{N}+\frac{\tau}{2})}\cdot e^{-2\eta_2(z+\frac{n}{N}+\frac{\tau}{2})}\right) \\
         &= e^{2\frac{\eta_1\tau}{N}-2\frac{\eta_2}{N}} = e^{2\pi i/N} \,,
    \end{align*}
    where~$2\eta_1$ and~$2\eta_2$ are the quasiperiods. Here, we use the transformation laws
    \[
        \frac{\sigma(z+1)}{\sigma(z)} = -e^{2\eta_1(z+\frac12)} \quad\text{and}\quad \frac{\sigma(z+\tau)}{\sigma(z)} = -e^{2\eta_2(z+\frac{\tau}{2})}
    \]
    for~$\sigma$ in the fourth line \cite[Theorem~IV.3]{chandrasekharan}
    , and use Legendre's relation $\eta_1\tau-\eta_2=\pi i$ in the final line \cite[Theorem~IV.2]{chandrasekharan}. Comparing our values for the Weil pairing and the intersection pairing, the result follows.
\end{proof}

\begin{rmk}
    The basis $(P_\tau,Q_\tau)$ of~$E_\tau[N]$ is \emph{negatively} oriented, in the sense that the intersection product~$P_\tau\cdot Q_\tau=-1$ when we identify~$E_\tau[N]\cong H_1(E_\tau,\Z/N\Z)$ in the usual way \cite[Proposition~VI.5.6(b)]{silverman2009arithmetic}. So Lemma~\ref{lem:weil_vs_intersection} implies that the Weil pairing and intersection pairing on a complex elliptic curve are related by
    \[
    e_N(P,Q) = e^{-2\pi iP\cdot Q/N} \,.
    \]
\end{rmk}

As a consequence of Lemma~\ref{lem:weil_vs_intersection}, the triple~$(E_{\H},P_{\H},Q_{\H})$ defines a $\cO(\H)$-valued point of~$Y(N)$, viewing both as~$\Q(\zeta_N)$-schemes and using Lemma~\ref{lem:new_moduli_description}. If we write~$Y(N)$ as the zero-set of some polynomials over~$\Q(\zeta_N)$, then this $\cO(\H)$-valued point is given by a tuple of holomorphic functions satisfying the defining equations of~$Y(N)$, and so corresponds to a holomorphic map
\[
\H \to Y(N)_{\C}^{\an} \,.
\]
($Y(N)_{\C}$ denotes the base-change of~$Y(N)$ along~$\Q(\zeta_N)\hookrightarrow\C$.) On the level of sets, the above map takes~$\tau\in\H$ to the class of the complex elliptic curve~$(E_\tau,P_\tau,Q_\tau)$. Two triples~$(E_\tau,P_\tau,Q_\tau)$ and~$(E_{\tau'},P_{\tau'},Q_{\tau'})$ are isomorphic if~$\tau$ and~$\tau'$ lie in the same $\Gamma(N)$-orbit, so this map factors through a holomorphic map
\[
\phi\colon Y(N)_{\C}^{\Sh,\an}  = \Gamma(N)\backslash\H \to Y(N)_{\C}^{\an} \,.
\]
Any complex elliptic curve~$E$ with a basis~$(P,Q)$ of~$E[N]$ satisfying~$e_N(P,Q)=e^{2\pi i/N}$ is isomorphic to~$(E_\tau,P_\tau,Q_\tau)$ for some value of~$\tau\in\H$ which is unique up to the action of~$\Gamma(N)$, and so~$\phi$ is bijective. Since~$Y(N)$ is reduced and smooth, this implies that~$\phi$ is a biholomorphism, and so it is the analytification of an isomorphism
\[
\phi\colon Y(N)_{\C}^{\Sh} \xrightarrow\sim Y(N)_{\C}
\]
of complex algebraic curves.

Next, we show that~$\phi$ is the base change of an isomorphism of curves over~$\Q(\zeta_N)$. For this, we construct elements of the function field of~$Y(N)$ as follows. Fix~$m,n\in\Z$ and suppose that~$E$ is an elliptic curve over a $\Q$-algebra~$R$ which is globally given by a Weierstrass equation
\[
y^2 = 4x^3 - c_2x - c_3 \,,
\]
where~$c_2,c_3\in R$ with $\Delta=c_2^3-27c_3^2$ a unit. If~$(P,Q)$ is a basis of~$E[N]$ defined over~$R$, then the $x$-coordinate~$x(mP+nQ)$ is again an element of~$R$. The element
\[
h_{(m,n)}(E,P,Q)\coloneqq\frac{c_2c_3}{\Delta}\cdot x(mP+nQ) \in R
\]
does not depend on the choice of equation (since the equation is unique up to a scaling $(x,y)\mapsto(u^2x,u^3y)$). If we apply this construction to the universal elliptic curve~$E_N$ over~$Y(N)$ and its tautological trivialisation~$(P_N,Q_N)$, then we may cover~$Y(N)$ with affine opens~$U_i$ over which~$E_N$ admits a global Weierstrass equation, and then the resulting elements $$h_{(m,n)}(E_N|_{U_i},P_N|_{U_i},Q_N|_{U_i})$$ glue together to elements
\[
h_{(m,n)}\in\cO(Y(N)) \,.
\]

By construction, $\phi^*h_{(m,n)}\in\cO(\H)$ is the meromorphic function $f_{(m,n)}\in\mathcal{F}_N$ defined earlier~\eqref{eq:fmn}. So the function field of~$Y(N)^{\Sh}$ is contained in the function field of~$Y(N)$ when we view both as subfields of the field of meromorphic functions on~$Y(N)^{\Sh}_{\C}\cong Y(N)_{\C}$. This means that the inverse of~$\phi$ is the base-change of a morphism of curves defined over~$\Q(\zeta_N)$: this morphism is then automatically an isomorphism, and so~$\phi$ is also defined over~$\Q(\zeta_N)$.

This completes the proof of Proposition~\ref{prop:two_perspectives_on_modular_curves}, except for the assertion regarding the $\GL_2(\Z/N\Z)$-actions. For this, the left action of~$\GL_2(\Z/N\Z)$ on~$Y(N)$ induces a right action on~$\cO(Y(N))$, which on the elements~$h_{(m,n)}$ is given by
\[
    h_{(m,n)}\cdot\begin{pmatrix}a&b\\c&d\end{pmatrix} = \frac{c_2c_3}{\Delta}\cdot x(m(a'P+c'Q)+n(b'P+d'Q)) = h_{(a'm+b'n,c'm+d'n)} \,,
\]
where~$a',b',c',d'$ are as in~\eqref{eq:inverse_matrix}. Comparing with the action on~$f_{(m,n)}$ from~\eqref{eq:action_on_meromorphic_functions}, we see that the actions of~$\GL_2(\Z/N\Z)$ on the function fields of~$Y(N)^{\Sh}$ and~$Y(N)$ differ by inverse transposition, which is what we wanted to show. \qed

\begin{rmk}
    \leavevmode
    \begin{itemize}
        \item The curves~$X_G$ and~$X_G^{\Sh}=X_{G^{\intercal}}$ can genuinely be non-isomorphic over the base field~$F_G$, see~\cite[Remark~2.2]{rszbadic}. On the other hand, they are always \emph{geometrically} isomorphic, i.e.~twists of one another. This is because any element of~$\SL_2$ is conjugate to its inverse transpose via the matrix~$\begin{pmatrix}0&1\\-1&0\end{pmatrix}$, and this means that~$X_{G,\C}^{\Sh,\an} = (G\cap\SL_2(\Z))\backslash\H$ and~$X_{G^{\intercal},\C}^{\Sh,\an} = (G^{\intercal}\cap\SL_2(\Z))\backslash\H$ are biholomorphic.
        \item For the subgroup~$H$ considered in Section~\ref{sec3}, the curves~$X_H$ and~$X_H^{\Sh}$ are isomorphic, because~$H$ and~$H^{\intercal}$ are conjugate via~$\begin{pmatrix}0&1\\-1&0\end{pmatrix}\in\SL_2(\Z/27\Z)$. So it does not matter which definition we use in Section~\ref{sec3}; we are talking about the same modular curve.
    \end{itemize}
\end{rmk}


\printbibliography
\end{document}